\newtheorem*{Th*}{Theorem}
\newtheorem{Th}{Theorem}[section]
\newtheorem{Prop}{Proposition}[section]   
\newtheorem{Lem}{Lemma}[section]   
\newtheorem{Coro}{Corollary}[section]   
\newtheorem{Rem}{Remark}[section]
\newcommand{\R}{\mathbb{R}}
\newcommand{\Z}{\mathbb{Z}}
\newcommand{\C}{\mathbb{C}}
\newcommand{\T}{\mathbb{T}}
\newcommand{\Q}{\mathcal{Q}}
\newcommand{\LL}{\mathcal{L}}
\newcommand{\hu}{{\widehat u}}
\newcommand{\h}{\mathfrak{h}}
\newcommand{\1}{\langle} 
\newcommand{\2}{\rangle} 
\newcommand{\n}{\langle n\rangle} 
\newcommand{\p}{{\partial}}
\newcommand{\HH}{\mathcal{H}}
\newcommand{\spec}{\mathop{\rm Spec}\nolimits}
\newcommand{\re}{\mathop{\rm Re}}
\newcommand{\image}{\mathop{\rm Image}}
\newcommand{\dom}{\mathop{\rm Dom}}
\newcommand{\e}{\varepsilon}
\begin{document}

\title[Low regularity phase space for BO equation]{On the low regularity phase space of the Benjamin-Ono equation}   
 
\dedicatory{\em Dedicated to the memory of our friend and collaborator Thomas Kappeler}
 
\author[P. G\'erard]{Patrick G\'erard}
\address{ Universit\'e Paris--Saclay, Laboratoire de Math\'ematiques d'Orsay, CNRS, UMR 8628, 91405 Orsay, 
France} \email{{\tt patrick.gerard@universite-paris-saclay.fr}}
\author[P. Topalov]{Petar Topalov}
\address{Department of Mathematics, Northeastern University,
567 LA (Lake Hall), Boston, MA 0215, USA}
\email{{\tt p.topalov@northeastern.edu}}
  
\begin{abstract}  
In this paper we prove that the Benjamin-Ono equation is globally in time $C^0$--well-posed in the Hilbert space 
$H^{-1/2,\sqrt{\log}}(\T,\R)$ of periodic distributions in $H^{-1/2}(\T,\R)$ with $\sqrt{\log}$-weights.
The space $H^{-1/2,\sqrt{\log}}(\T,\R)$ can thus be considered as a maximal low regularity phase space for 
the Benjamin-Ono equation corresponding to the scale $H^s(\T,\R)$, $s>-1/2$.
\end{abstract}   

\maketitle

\noindent{\small\em Keywords} : {\small Benjamin--Ono equation, well-posedness, logarithmic estimates}

\noindent{\small\em 2020 MSC} : {\small  37K15 primary, 47B35 secondary}

\tableofcontents

\section{Introduction}\label{sec:ntroduction}
In this paper we study the Benjamin-Ono equation on the torus $\T := \R/2\pi\Z$,
\begin{equation}\label{eq:BO}
\partial_t u=\partial_x\big(|\partial_x|u-u^2\big), 
\end{equation}
where $u\equiv u(x,t)$, $x\in\T$, $t\in\R$, is real valued and $|\partial_x| : H^\beta_c\to H^{\beta-1}_c$, $\beta\in\R$, 
is the Fourier multiplier
\begin{equation}\label{eq:|D|}
|\partial_x| : \sum_{n\in\Z}\widehat{v}(n) e^{i n x}\mapsto\sum_{n\in\Z}|n|\,\widehat{v}(n) e^{i n x}
\end{equation}
where $\widehat{v}(n)$, $n\in\Z$, are the Fourier coefficients of $v\in H^\beta_c$ and $H^\beta_c\equiv H^\beta(\T,\C)$
is the Sobolev space of complex valued distributions on the torus $\T$. 
The equation \eqref{eq:BO} was introduced in 1967 by Benjamin \cite{Ben1967} and Davis $\&$ Acrivos \cite{DA1967} as 
a model for a special regime of internal gravity waves at the interface of two fluids. It is well known that \eqref{eq:BO} admits 
a Lax pair representation (cf. \cite{Na1979}) that leads to an infinite sequence of conserved quantities (cf. \cite{Na1979}, \cite{BK1979})
and that it can be written in Hamiltonian form  with Hamiltonian
\begin{equation}\label{eq:BO-Hamiltonian}
\HH(u):=\frac{1}{2\pi}\int_0^{2\pi}\Big(\frac{1}{2}\big(|\p_x|^{1/2}u\big)^2-\frac{1}{3}u^3\Big)\,dx
\end{equation}
by the use of the Gardner bracket
\begin{equation}\label{eq:bracket}
\{F,G\}(u):=\frac{1}{2\pi}\int_0^{2\pi}\big(\partial_x\nabla_u F\big)\nabla_u G\,dx
\end{equation}
where $\nabla_u F$ and $\nabla_u G$ are the $L^2$-gradients of $F,G\in C^1(H^\beta_c,\R)$ at $u\in H^\beta_c$.
By the Sobolev embedding $H^{1/2}_c\hookrightarrow L^3(\T,\C)$, the Hamiltonian \eqref{eq:BO-Hamiltonian} is 
well defined and analytic on $H^{1/2}_c$, the {\em energy space} of \eqref{eq:BO}.
The problem of the existence and the uniqueness of the solutions of the Benjamin-Ono equation is well studied 
-- see \cite{GK,GKT1}, \cite{Saut2019,KS} and references therein. 
We refer to \cite{Saut2019,KS} for an excellent survey and a derivation of \eqref{eq:BO}.

By using the Hamiltonian formalism for \eqref{eq:BO}, it was recently proven in \cite{GK},\cite{GKT1} that for 
any $s>-1/2$, the Benjamin-Ono equation has a {\em homeomorphic} Birkhoff map
\begin{equation}\label{eq:Phi-introduction}
\Phi : H^{s}_{r,0}\to\h^{\frac{1}{2}+s}_{r,0},\ u\mapsto
\big((\overline{\Phi_{-n}(u)}  )_{n \le -1}, (\Phi_n(u))_{n \ge 1},\Phi_0(u)=0\big) 
\end{equation}
where for $\beta\in\R$, 
\begin{equation}\label{qe:H_0}
H^\beta_{r,0}:=\big\{u\in H^\beta_r\,\big|\,\hu(0)=0, \overline{u}=u\big\},\,\,
\end{equation}
and
\begin{equation}\label{eq:h_{r,0}}
\h^\beta_{r,0}:=\big\{z\in\h^\beta_c\,\big|\,z_0=0, z_{-n}=\overline{z}_n \ \forall n\ge 1\big\}
\end{equation}
is a real subspace in the Hilbert space of complex-valued sequences
\begin{equation}\label{eq:h_{c,0}}
\h^\beta_c:=\big\{ (z_n)_{n\in\Z}\,\big|\,\sum_{n\in\Z}\n^{2\beta}|z_n|^2 < \infty\big\},\quad\n:=\max(1,|n|),
\end{equation}
equipped with the norm $\|z\|_{\h^\beta_c}:=\big(\sum_{n\in\Z}\n^{2\beta}|z_n|^2\big)^{1/2}$.
For $\beta=0$ we set 
\[
L^2_{r,0}\equiv H^0_{r,0},\quad L^2_c\equiv H^0_c,\quad\ell^2_{r,0}\equiv\h^0_{r,0},\quad\ell^2_c\equiv\h^0_c\,.
\]
By \cite{GKT3,GKT4}, the Birkhoff map \eqref{eq:Phi-introduction} is a {\em bianalytic} diffeomorphism.
It transforms the trajectories of the Benjamin-Ono equation \eqref{eq:BO} into straight lines that have constant frequencies
on any given isospectral set (infinite torus) of potentials of the corresponding Lax operator (see \eqref{eq:L} below). 
In this sense, the Birkhoff map can be considered as a non-linear Fourier transform that significantly simplifies 
the solutions of \eqref{eq:BO}. This fact allows us to prove that for any $-1/2<s<0$, 
\eqref{eq:BO} is globally $C^0$--well-posed on $H^{s}_{r,0}$ (\cite{GKT1}) improving in this way 
the previously known well-posedness results (see \cite{Mo,MoP}). 
Additional applications of the Birkhoff map include the proof of the almost periodicity of the solutions of 
the Benjamin-Ono equation and the orbital stability of the Benjamin-Ono traveling waves
(see \cite[Theorem 3 and Theorem 4]{GKT1} and \cite{GKT4}).

In order to formulate our results we define for any $\beta\in\R$ the Hilbert space of 
periodic distributions in $H^\beta_c$,
\begin{equation}\label{eq:log-spaces}
H^{\beta,\sqrt{\log}}_c\equiv H^{\beta,\sqrt{\log}}_c(\T,\C):=
\big\{u\in H^{\beta}_c\,\big|\,\sum_{n\in\Z}\n^{2\beta}\log(\n+1)\,|\hu(n)|^2<\infty\big\}
\end{equation}
as well as the spaces 
\[
H^{\beta,\sqrt{\log}}_{r,0}:=H^\beta_{r,0}\cap H^{\beta,\sqrt{\log}}_c,
\]
\[
\h^{\beta,\sqrt{\log}}_{r,0}:=\h^\beta_{r,0}\cap\h^{\beta,\sqrt{\log}}_c, 
\]
and 
\[
\h^{\beta,\sqrt{\log}}_c:=\big\{(z_n)_{n\in\Z}\in\h^\beta_c\,\big|\,\sum_{n\in\Z}\n^{2\beta}\log(\n+1)\,|z_n|^2<\infty\big\}.
\] 
Note that for any $s>-1/2$ we have the compact embedding
\[
H^s_{r,0}\subsetneqq H^{-1/2,\sqrt{\log}}_{r,0}.
\]
Our first result concerns the extension of the Birkhoff map \eqref{eq:Phi-introduction} from $H^s_{r,0}$ with $s>-1/2$ to
the space $H^{-1/2,\sqrt{\log}}_{r,0}$.

\begin{Th}\label{th:main}
The Birkhoff map \eqref{eq:Phi-introduction} extends to a homeomorphic map
$\Phi : H^{-1/2,\sqrt{\log}}_{r,0}\to\h^{0,\sqrt{\log}}_{r,0}$.
\end{Th}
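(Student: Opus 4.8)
The plan is to build the extension by continuity from the dense subspace $\bigcup_{s>-1/2}H^s_{r,0}$ of $H^{-1/2,\sqrt{\log}}_{r,0}$, on which the Birkhoff map \eqref{eq:Phi-introduction} is already defined and is, by the works cited above, a homeomorphism onto the corresponding union $\bigcup_{s>-1/2}\h^{1/2+s}_{r,0}\subseteq\h^{0,\sqrt{\log}}_{r,0}$. Two facts make this possible. First, for every $s>-1/2$ the space $H^s_{r,0}$ is dense in $H^{-1/2,\sqrt{\log}}_{r,0}$ (already the zero-mean real trigonometric polynomials are dense, since truncation of the Fourier series converges in the weighted norm), and symmetrically $\h^{1/2+s}_{r,0}$ is dense in $\h^{0,\sqrt{\log}}_{r,0}$. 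Second, and decisively, one needs a uniform \emph{logarithmically weighted} bound of the form
\[
\sum_{n\in\Z}\log(\n+1)\,|\Phi_n(u)|^2\ \le\ C\big(\|u\|_{H^{-1/2,\sqrt{\log}}_{r,0}}\big),
\]
together with its inverse counterpart bounding $\|\Phi^{-1}(z)\|_{H^{-1/2,\sqrt{\log}}_{r,0}}$ by a continuous function of $\|z\|_{\h^{0,\sqrt{\log}}_{r,0}}$, where the constants depend continuously on the indicated weighted norms but not on the auxiliary exponent $s$. Such an estimate is exactly the mechanism that upgrades the degenerate plain endpoint $H^{-1/2}\to\ell^2$ to a bicontinuous correspondence on the $\sqrt{\log}$-refined scale.

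First I would establish this uniform estimate through the spectral theory of the Lax operator $L_u$ attached to \eqref{eq:BO}. Expressing the Birkhoff coordinates $\Phi_n(u)$ in terms of the eigenvalues and spectral data of $L_u$ and inserting them into the relevant trace formula, the weighted sum $\sum_n\log(\n+1)|\Phi_n(u)|^2$ must be compared, mode by mode, with $\|u\|_{H^{-1/2,\sqrt{\log}}_{r,0}}^2$. The essential point is that the eigenvalue-gap data carry precisely the factor $\log(\n+1)$ dictated by the $\sqrt{\log}$-weight. I expect this to be the main obstacle: at the critical regularity $-1/2$ the unweighted quantities are only logarithmically divergent, so the argument must capture the borderline logarithmic gain uniformly across the high modes rather than lose it. The inverse estimate would be obtained in the same spirit, reading the trace identities in the opposite direction.

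With the uniform estimate in hand, continuity of the extension follows from a weak-to-strong convergence argument. For $u_k\to u$ in $H^{-1/2,\sqrt{\log}}_{r,0}$ the set $\{u_k\}$ is bounded, so each fixed coordinate $\Phi_n$ extends to a continuous function on $H^{-1/2,\sqrt{\log}}_{r,0}$ (its defining spectral data depending continuously on $u$ down to the endpoint), whence $\Phi_n(u_k)\to\Phi_n(u)$; moreover the uniform bound makes $(\Phi(u_k))_k$ bounded in $\h^{0,\sqrt{\log}}_{r,0}$, and since coordinatewise convergence of a bounded sequence implies weak convergence in this weighted $\ell^2$ space, $\Phi(u_k)\rightharpoonup\Phi(u)$. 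To upgrade this to norm convergence I would show that the weighted norm $\|\Phi(u)\|_{\h^{0,\sqrt{\log}}_{r,0}}^2$ is itself a continuous functional of $u$ (equivalently, that the tails $\sum_{|n|>N}\log(\n+1)|\Phi_n(u)|^2$ are equismall on precompact sets), so that $\|\Phi(u_k)\|\to\|\Phi(u)\|$; by the Radon--Riesz property of the Hilbert space $\h^{0,\sqrt{\log}}_{r,0}$, weak convergence together with convergence of norms yields $\Phi(u_k)\to\Phi(u)$ strongly.

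Finally, applying the same scheme to $\Phi^{-1}$ produces a continuous map $\h^{0,\sqrt{\log}}_{r,0}\to H^{-1/2,\sqrt{\log}}_{r,0}$. Since the two extensions coincide with $\Phi$ and $\Phi^{-1}$ on the respective dense subspaces, where they are mutually inverse, the same holds everywhere by continuity; in particular the extended $\Phi$ is a bijection with continuous inverse, i.e. a homeomorphism, which is the assertion of Theorem \ref{th:main}.
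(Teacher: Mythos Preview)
Your overall scheme --- extend from a dense subspace, establish coordinatewise continuity, then upgrade to norm continuity --- is close in spirit to what the paper does, but the mechanism you propose for the upgrade has a genuine gap.

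The estimate you say you would prove,
\[
\sum_{n}\log(\n+1)\,|\Phi_n(u)|^2\ \le\ C\big(\|u\|_{H^{-1/2,\sqrt{\log}}_{r,0}}\big),
\]
with $C$ depending only on the $H^{-1/2,\sqrt{\log}}$--norm of $u$, is \emph{false}. This is precisely the content of the counterexample underlying Proposition~\ref{prop:noweakcontinuity}: there is a sequence $(u_{0,q})_q$ that is bounded in $H^{-1/2,\sqrt{\log}}_{r,0}$ (in fact $\|u_{0,q}\|_{-1/2,\sqrt{\log}}\to\beta$) while $\gamma_1(u_{0,q})=|\Phi_1(u_{0,q})|^2\to+\infty$. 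Hence $\Phi$ does \emph{not} map bounded sets to bounded sets, and no bound of the type you describe can hold. Consequently your Radon--Riesz step cannot be fed the boundedness it needs, and the extension-by-continuity argument (which requires at least that Cauchy sequences go to Cauchy sequences) breaks down as stated.

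What the paper does instead is to replace ``bounded'' by ``pre-compact'' throughout. The key technical device is the \emph{quasi-linear} identity
\[
\big(\1 1|f_n(u)\2\big)_{n\ge 0}=D_{u,\lambda_0(u)-1}\big(-\Pi u-\lambda_0(u)+1\big),
\]
where $D_{u,\lambda}$ is a linear isomorphism $H^{-1/2,\sqrt{\log}}_+\to\h^{1/2,\sqrt{\log}}_{\ge 0}$ built from the eigenbasis of $L_u$, whose operator norms (together with those of its inverse) are controlled \emph{locally uniformly} on neighborhoods of each point of $H^{-1/2,\sqrt{\log}}_{r,0}$, but not globally in terms of the norm (Proposition~\ref{prop:pre-Birkhoff_estimates}). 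This local control, obtained by interpolating between the $L^2$ and $H^{1/2}$ Fourier transforms associated with the eigenbasis and then integrating in the interpolation parameter to produce the logarithmic weight, is exactly strong enough to show that $\Phi$ and $\Phi^{-1}$ send pre-compact sets to pre-compact sets. Continuity then follows from coordinatewise continuity plus pre-compactness of the image of any convergent sequence --- the same endgame you had in mind, but with the correct hypothesis. If you want to repair your argument, replace the global norm bound by this locally uniform quasi-linear representation; the uniform-in-norm estimate you aimed for simply does not exist at this regularity.
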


As a consequence from this theorem we obtain the following

\begin{Coro}\label{coro:well-posedness}
The Benjamin-Ono equation \eqref{eq:BO} is globally in time $C^0$--well-posed in the phase space $H^{-1/2,\sqrt{\log}}_{r,0}$.
More specifically, for any $t\in\R$ and $s>-1/2$ the flow map $S^t : H^s_{r,0}\to H^s_{r,0}$ defined in
\cite[Theorem 1]{GKT1} extends to a continuous flow map $S^t : H^{-1/2,\sqrt{\log}}_{r,0}\to H^{-1/2,\sqrt{\log}}_{r,0}$. 
Furthermore, for any $T>0$ the associated solution map 
$S : H^{-1/2,\sqrt{\log}}_{r,0}\to C\big([-T,T],H^{-1/2,\sqrt{\log}}_{r,0}\big)$, 
$u_0\mapsto\big\{t\mapsto S^t u_0, t\in[-T,T]\big\}$, 
is continuous and the corresponding trajectories are almost periodic as functions from $\R $ to $H^{-1/2,\sqrt{\log}}_{r,0}$.
\end{Coro}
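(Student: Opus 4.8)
The plan is to conjugate the flow through the extended Birkhoff map of Theorem~\ref{th:main} and reduce everything to the explicit diagonal dynamics in Birkhoff coordinates. Recall from \cite{GKT1} that on $H^s_{r,0}$, $s>-1/2$, the Benjamin--Ono flow factorizes as $S^t=\Phi^{-1}\circ L^t\circ\Phi$, where $L^t$ is the \emph{frequency flow}
\[
L^t\big((z_n)_{n\in\Z}\big):=\big(e^{i\omega_n(z)t}z_n\big)_{n\in\Z},
\]
the frequencies $\omega_n(z)$ being real, satisfying $\omega_{-n}=-\omega_n$ (so that the reality condition $z_{-n}=\overline{z}_n$ is preserved), depending only on the actions $(|z_k|^2)_{k\ge1}$, and of the form $\omega_n(z)=n|n|-2n\sum_{k\ge1}\min(|n|,k)\,|z_k|^2$ up to normalization constants. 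I would first record the two structural facts I will use: since $\sum_{k\ge1}\min(|n|,k)|z_k|^2\le|n|\,\|z\|_{\ell^2}^2$, for each fixed $n$ the frequency $\omega_n(z)$ is finite on $\ell^2\supset\h^{0,\sqrt{\log}}_{r,0}$ with $|\omega_n(z)|\lesssim n^2(1+\|z\|_{\ell^2}^2)$, and $z\mapsto\omega_n(z)$ is Lipschitz on bounded subsets of $\ell^2$. I then \emph{define} $S^t:=\Phi^{-1}\circ L^t\circ\Phi$ on $H^{-1/2,\sqrt{\log}}_{r,0}$ by the same formula, which is legitimate now that $\Phi$ is a homeomorphism onto $\h^{0,\sqrt{\log}}_{r,0}$.

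Second, I would show that $L^t$ maps $\h^{0,\sqrt{\log}}_{r,0}$ continuously into itself. Since $|(L^tz)_n|=|z_n|$ for every $n$, the map $L^t$ preserves each weighted norm, in particular $\|L^tz\|_{\h^{0,\sqrt{\log}}_{r,0}}=\|z\|_{\h^{0,\sqrt{\log}}_{r,0}}$, so it does map the space to itself. For continuity, let $z^{(j)}\to z$ and split
\[
e^{i\omega_n(z^{(j)})t}z^{(j)}_n-e^{i\omega_n(z)t}z_n
=e^{i\omega_n(z^{(j)})t}\big(z^{(j)}_n-z_n\big)+\big(e^{i\omega_n(z^{(j)})t}-e^{i\omega_n(z)t}\big)z_n.
\]
By the triangle inequality the first term contributes exactly $\|z^{(j)}-z\|_{\h^{0,\sqrt{\log}}_{r,0}}\to0$, while the second has squared norm $\sum_n\log(\n+1)\,|e^{i\omega_n(z^{(j)})t}-e^{i\omega_n(z)t}|^2|z_n|^2$, whose summand is dominated by $4\log(\n+1)|z_n|^2$ and tends to $0$ for each fixed $n$ (because $\omega_n(z^{(j)})\to\omega_n(z)$ by the pointwise Lipschitz continuity above); dominated convergence finishes it. The same estimate with $t'\to t$ gives continuity of $t\mapsto L^tz$. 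Hence $S^t=\Phi^{-1}\circ L^t\circ\Phi$ is continuous on $H^{-1/2,\sqrt{\log}}_{r,0}$; since it coincides with the flow map of \cite[Theorem 1]{GKT1} on each $H^s_{r,0}$, $s>-1/2$, and these contain the trigonometric polynomials, which are dense in $H^{-1/2,\sqrt{\log}}_{r,0}$, it is the unique continuous extension.

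Third, for the continuity of the solution map into $C\big([-T,T],H^{-1/2,\sqrt{\log}}_{r,0}\big)$ and for almost periodicity I need the corresponding uniform-in-$t$ statements. Writing $w^{(j)}=\Phi(u_0^{(j)})\to w=\Phi(u_0)$, the bound $|\omega_n(w^{(j)})-\omega_n(w)|\le C(w)\,n^2\,\|w^{(j)}-w\|_{\ell^2}$ together with $|e^{ia}-e^{ib}|\le\min(2,|a-b|)$ and dominated convergence (the $t$-dependence being absorbed into $|t|\le T$) yields $\sup_{|t|\le T}\|L^tw^{(j)}-L^tw\|_{\h^{0,\sqrt{\log}}_{r,0}}\to0$. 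To transport this through $\Phi^{-1}$ I would observe that $F(t,j):=L^tw^{(j)}$, $F(t,\infty):=L^tw$, extends to a continuous map on the compact space $[-T,T]\times(\N\cup\{\infty\})$, so its image $K$ is a compact subset of $\h^{0,\sqrt{\log}}_{r,0}$ on which $\Phi^{-1}$ is automatically \emph{uniformly} continuous; this converts uniform convergence in Birkhoff coordinates into $\sup_{|t|\le T}\|S^tu_0^{(j)}-S^tu_0\|\to0$. Finally, $t\mapsto L^tw$ is almost periodic in $\h^{0,\sqrt{\log}}_{r,0}$, being the uniform limit over $t\in\R$ of the almost periodic truncations $t\mapsto(e^{i\omega_n(w)t}w_n)_{|n|\le N}$, whose tails have norm $\big(\sum_{|n|>N}\log(\n+1)|w_n|^2\big)^{1/2}\to0$ independently of $t$; composing with $\Phi^{-1}$, uniformly continuous on the (precompact) closure of the trajectory, preserves almost periodicity, giving that $t\mapsto S^tu_0$ is almost periodic.

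The main obstacle is this last transport step: Theorem~\ref{th:main} only asserts that $\Phi^{-1}$ is a homeomorphism, hence merely continuous and not globally uniformly continuous on the infinite-dimensional space $\h^{0,\sqrt{\log}}_{r,0}$, so the uniform-in-$t$ conclusions cannot be read off from pointwise continuity. The crux is therefore to realize the relevant trajectory sets as \emph{compact} subsets of $\h^{0,\sqrt{\log}}_{r,0}$ — via the one-point compactification argument above and the precompactness of almost periodic orbits — on which $\Phi^{-1}$ upgrades to uniform continuity. It is precisely the preservation of each modulus $|z_n|$ by $L^t$ that makes the high-frequency tails uniformly small in $t$, and hence this compactness available.
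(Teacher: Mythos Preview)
Your proposal is correct and follows precisely the route the paper intends: the paper's own proof is a one-line reference to \cite[Section 5]{GKT1} (and \cite[Section 4]{KT1}), whose argument is exactly the conjugation $S^t=\Phi^{-1}\circ L^t\circ\Phi$ together with the compactness/uniform-continuity transport you spell out. One small correction that does not affect the argument: the Benjamin--Ono frequencies are $\omega_n=n^2-2\sum_{k\ge1}\min(k,n)\,|z_k|^2$ for $n\ge1$ (no extra factor of $n$ in the sum), so your Lipschitz bound is actually of order $n$ rather than $n^2$, which only strengthens the dominated-convergence step.
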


\begin{Rem}
Recently, Killip, Laurens and Vi\c san \cite{KLV} found a different proof of the wellposedness on $H^s_r$ for every $s>-1/2$, 
which can be generalized to the Benjamin--Ono equation on the real line. It would be interesting to know whether the methods of 
\cite{KLV} lead to a similar wellposedness result on $H^{-1/2,\sqrt{\log }}_r(\R )$.
\end{Rem} 

\begin{Rem}
The first author recently derived in \cite{G} an explicit formula for the solution of the Benjamin--Ono equation on the torus. 
It can be easily checked that this formula holds for every initial datum in $H^{-1/2,\sqrt{\log}}_{r,0}(\T)$.
It does not seem straightforward to get Corollary \ref{coro:well-posedness} from this formula only.
\end{Rem}

Next we come to some important limitations of the above extension, which are specific to the bottom regularity 
$H^{-1/2,\sqrt{\log}}_{r,0}$. First we start with the lack of weak continuity.

\begin{Prop}\label{prop:noweakcontinuity}
The map $\Phi $ is not weakly continuous from  $H^{-1/2,\sqrt{\log}}_{r,0}$ to  $\h^{0,\sqrt{\log}}_{r,0}$, 
and the flow map of the Benjamin--Ono equation is not weakly continuous from $H^{-1/2,\sqrt{\log}}_{r,0}$ to  
$H^{-1/2,\sqrt{\log}}_{r,0}$. In fact, there exists a sequence of smooth initial data converging weakly to $0$ in 
$H^{-1/2,\sqrt{\log}}_{r,0}$, and such that the sequence of corresponding solutions does not converge to $0$ in
$\mathcal D'(\T)$ on any time interval $[0,T]$ with $T>0$.
\end{Prop}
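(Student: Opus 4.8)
The plan is to exhibit a single sequence $(u_N)_{N\ge 1}$ of smooth real-valued functions with $\hu(0)=0$ that settles all three assertions simultaneously, working entirely through the Birkhoff map $\Phi$ of Theorem \ref{th:main} and the fact (see \cite{GKT1}) that $\Phi$ conjugates the Benjamin--Ono flow to the rotation $\Phi_n(S^t u)=e^{-it\,\omega_n(u)}\,\Phi_n(u)$, $n\ge 1$, where the real frequencies $\omega_n(u)$ depend only on the actions $|\Phi_k(u)|^2$. The two structural ingredients I would rely on are that the action coordinate functionals $z\mapsto z_{n}$ are weakly continuous on $\h^{0,\sqrt{\log}}_{r,0}$, and that a Fourier coefficient $\widehat{u_N}(m)$ is recovered from $\Phi(u_N)$ through the inverse spectral (reconstruction) formula attached to the Lax operator $L_{u}$; both the spectral data and the reconstruction degenerate precisely at the critical regularity $H^{-1/2,\sqrt{\log}}_{r,0}$, and it is this degeneration that drives the whole construction.

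First I would build the sequence so that $u_N\rightharpoonup 0$ in $H^{-1/2,\sqrt{\log}}_{r,0}$ while a \emph{fixed} Birkhoff coordinate survives, say $\Phi_{n_0}(u_N)\to\zeta_0\neq 0$ for some fixed $n_0$. The mechanism is the borderline continuity of the Lax operator $L_u=|\p_x|-T_u$ at regularity exactly $H^{-1/2,\sqrt{\log}}$: the critical pairing $\langle T_u f,f\rangle=\langle u,|f|^2\rangle$ sits at the threshold where $|f|^2$ for $f\in H^{1/2}_c$ just fails to be controlled, so that a sequence $u_N\rightharpoonup 0$ (hence $\widehat{u_N}(m)\to 0$ for every fixed $m$, with mass escaping to high frequencies) need not make the relevant form values vanish. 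I would choose $u_N$ — concretely, a high-frequency packet with carefully tuned amplitudes at the bottom-regularity threshold and aligned phases — so that the corresponding eigenvalue splitting of $L_{u_N}$, i.e.\ one spectral gap, stays bounded away from $0$; this produces the persistent action $|\Phi_{n_0}(u_N)|\to|\zeta_0|\neq 0$. Since $z\mapsto z_{n_0}$ is weakly continuous, $\Phi(u_N)$ cannot converge weakly to $\Phi(0)=0$, which already shows that $\Phi$ is not sequentially weakly continuous from $H^{-1/2,\sqrt{\log}}_{r,0}$ to $\h^{0,\sqrt{\log}}_{r,0}$.

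For the flow, the decisive observation is that the delicate cancellation forcing $\widehat{u_N}(m)\to 0$ at $t=0$ is destroyed by the rotation $\Phi_n\mapsto e^{-it\,\omega_n(u_N)}\Phi_n$. Because $|\Phi_n(S^t u_N)|=|\Phi_n(u_N)|$, the weak convergence $\Phi(u_N)\not\rightharpoonup 0$ and the persistence of the surviving action are unaffected in time; but in the reconstruction series for a fixed coefficient $\widehat{S^t u_N}(m_0)$ the high-frequency terms now carry the rapidly varying phases $e^{-it(\omega_n(u_N)-\omega_{n'}(u_N))}$, with phase differences $\omega_n-\omega_{n'}\approx n^2-{n'}^2$ large and $n$-dependent. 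An oscillatory-sum (van der Corput / stationary phase) estimate should then show that the high-frequency block, which cancelled the surviving contribution at $t=0$, tends to $0$ for $t\neq 0$, so that $\widehat{S^t u_N}(m_0)$ converges to a nonzero limit for a.e.\ $t$ (with $\omega_n(u_N)$ converging along a subsequence since the actions do). Hence $S^t u_N\not\to 0$ in $\mathcal D'(\T)$ on any interval $[0,T]$; and since weak convergence in $H^{-1/2,\sqrt{\log}}_{r,0}$ implies convergence in $\mathcal D'(\T)$, the flow map is not sequentially weakly continuous either.

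The hard part will be the quantitative analysis at the critical regularity. Everything hinges on the borderline role of the $\sqrt{\log}$ weight: the high-frequency packet must be large enough (per mode at the $H^{-1/2,\sqrt{\log}}$ threshold) that it produces an $O(1)$ effect on the spectral gap and on the reconstruction of the low Fourier mode, yet small enough that $u_N$ stays bounded in $H^{-1/2,\sqrt{\log}}_{r,0}$ and $u_N\rightharpoonup 0$; this balance is exactly what the $\sqrt{\log}$ weight permits and no stronger weight would. Concretely, I expect two delicate points: proving that the chosen $u_N$ keeps one spectral gap of $L_{u_N}$ open in the limit despite $u_N\rightharpoonup 0$ (a failure of spectral continuity tied to the critical pairing), and controlling the reconstruction series uniformly enough in $t$ to secure a lower bound on $|\widehat{S^t u_N}(m_0)|$ on a set of positive measure in every $[0,T]$, using that the phase differences $\omega_n(u_N)-\omega_{n'}(u_N)$ are nonresonant and grow with $N$. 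Gérard's explicit formula \cite{G}, valid on $H^{-1/2,\sqrt{\log}}_{r,0}(\T)$, could serve as an alternative route for this last, most technical, computation.
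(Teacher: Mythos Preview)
Your outline has the right intuition---that the $\sqrt{\log}$ weight is exactly the threshold at which the quadratic form $\langle u,|f|^2\rangle$ fails to be weakly continuous---but what you have written is a strategy, not a proof: the sequence $(u_N)$ is never specified, the claim that one spectral gap of $L_{u_N}$ remains open in the limit is asserted rather than verified, and the ``oscillatory sum / van der Corput'' control of the reconstruction series for $\widehat{S^t u_N}(m_0)$ is left entirely heuristic. You yourself flag these as ``the hard part'', and indeed they are the whole content of the proposition; without them nothing is proved. There is also a mechanism mismatch worth noting: you aim for a \emph{finite} nonzero limit $\Phi_{n_0}(u_N)\to\zeta_0\neq 0$, whereas the actual obstruction in the paper is more violent---one gap blows up---and it is not clear that your milder scenario can be realized with the kind of ad hoc high-frequency packet you describe.

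The paper's proof is completely explicit and avoids all of these difficulties by revisiting a concrete one-parameter family from \cite{GKT1}: $u_{0,q}(x)=2\re\!\big(\e q e^{ix}/(1-q e^{ix})\big)$ with $\e=\beta/|\log(1-q)|$, which has $\|u_{0,q}\|_{-1/2,\sqrt{\log}}\to\beta$ and $u_{0,q}\rightharpoonup 0$ as $q\to 1$. For this family the eigenvalue equation for $L_{u_{0,q}}$ reduces to a first-order ODE in the disc, and an explicit function $F(\mu,q)$ governs the negative eigenvalues; for $\beta>1$ one reads off that the unique negative eigenvalue $\lambda_0(u_{0,q})=-\mu_q\to-\infty$, hence $\gamma_1(u_{0,q})=|\Phi_1(u_{0,q})|^2\to+\infty$, killing weak continuity of $\Phi$ immediately. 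The flow statement is then obtained not through a reconstruction formula plus stationary phase, but by a direct estimate, again from \cite{GKT1}, on $\xi_q(t):=\langle u_q(t)\mid e^{ix}\rangle$: one shows $\big|\int_I\xi_q(t)e^{-it(1-2\mu_q)}\,dt\big|=\sqrt{2}\,|I|+O(1/\mu_q)$ for every interval $I$, which forbids $\xi_q\to 0$ on any $[0,T]$. The moral is that the proposition is proved by producing one explicit, algebraically tractable family rather than by a soft abstract argument; your sketch could perhaps be made to work, but it would require building essentially the same amount of hard analysis that the explicit example sidesteps.
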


The proof of Proposition \ref{prop:noweakcontinuity} consists in revisiting the counterexample of \cite{GKT1} and in observing
that the  $H^{-1/2,\sqrt{\log}}_{r,0}$ regularity is critical in this construction.

\medskip

The second limitation concerns the smoothness of the Birkhoff map and is in sharp contrast with the results of \cite{GKT3,GKT4}.

\begin{Prop}\label{prop:no_analyticity}
The (bi-analytic) Birkhoff map \eqref{eq:Phi-introduction} cannot be extended to an analytic map
\begin{equation}\label{eq:Phi-log spaces}
\Phi : H^{-1/2,\sqrt{\log}}_{r,0}\to\h^{0,\sqrt{\log}}_{r,0}\,.
\end{equation}
\end{Prop}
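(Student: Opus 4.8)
The plan is to argue by contradiction. If $\Phi$ extended to an analytic map $H^{-1/2,\sqrt{\log}}_{r,0}\to\h^{0,\sqrt{\log}}_{r,0}$, then it would in particular be of class $C^2$ near $u=0$, so its second differential $d^2\Phi(0)$ would be a \emph{bounded} symmetric bilinear map $H^{-1/2,\sqrt{\log}}_{r,0}\times H^{-1/2,\sqrt{\log}}_{r,0}\to\h^{0,\sqrt{\log}}_{r,0}$. Since $\Phi$ is already known to be bi-analytic on each finer space $H^s_{r,0}$, $s>-1/2$, by \cite{GKT3,GKT4}, and since trigonometric polynomials are dense in $H^{-1/2,\sqrt{\log}}_{r,0}$ and lie in all the $H^s_{r,0}$, uniqueness of Taylor coefficients under the continuous embeddings $H^s_{r,0}\hookrightarrow H^{-1/2,\sqrt{\log}}_{r,0}$ and $\h^{1/2+s}_{r,0}\hookrightarrow\h^{0,\sqrt{\log}}_{r,0}$ forces this hypothetical bounded $d^2\Phi(0)$ to coincide, on smooth inputs, with the second-order Taylor coefficient of $\Phi$ computed in the $H^s$-category. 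Thus it suffices to exhibit an explicit formula for this quadratic coefficient and to show that, although it is bounded in every $H^s$-norm with $s>-1/2$, it fails to be bounded for the $H^{-1/2,\sqrt{\log}}_{r,0}$-norm.

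To produce the formula I would run second-order perturbation theory for the Lax operator $L_u$ at $u=0$, where the eigenfunctions are the exponentials $e^{inx}$ and the matrix elements of the Toeplitz part are the Fourier coefficients $\hu(m-n)$. Expanding the eigenfunctions, the $L^2$-pairing defining $\Phi_n$, and --- crucially --- the phase normalization that pins down $\Phi_n$, one collects the quadratic contributions. Most of them are genuine convolutions of the $\hu(k)$ against harmonic-type kernels $1/(n-k)$, and a direct check (the same estimate that survives from \cite{GKT3,GKT4}) shows these pieces are bounded on $H^{-1/2,\sqrt{\log}}_{r,0}$. The decisive contribution comes from the first-order variation of the normalizing phase, which multiplies the linear term $\kappa_n\hu(n)$, with $\kappa_n\asymp\n^{-1/2}$ being the multiplier of $d\Phi(0)$, by a scalar linear functional
\begin{equation*}
L(u)=\sum_{m\ge1}a_m\,\hu(m),\qquad a_m\asymp\tfrac1m,
\end{equation*}
so that $[d^2\Phi(0)(u,u)]_n\approx c\,\kappa_n\,\hu(n)\,L(u)+(\text{bounded terms})$.

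The punchline is a sharp log-criticality of $L$. One has $\sum_{m\ge1}m^{2s}|a_m|^2\asymp\sum_m m^{2s-2}<\infty$ for every $s>-1/2$, so $L$ is bounded on each $H^s_{r,0}$, in agreement with analyticity there; but the relevant dual sum at the borderline diverges,
\begin{equation*}
\sum_{m\ge1}\frac{m}{\log(m+1)}\,|a_m|^2\asymp\sum_{m\ge2}\frac{1}{m\log m}=+\infty,
\end{equation*}
so $L$ is \emph{unbounded} on $H^{-1/2,\sqrt{\log}}_{r,0}$ --- the $\sqrt{\log}$ weight is exactly one logarithm short. Concretely, taking the trigonometric polynomials $u_N$ with $\hu_N(m)=1/\log(m+1)$ for $2\le m\le N$, and $\hu_N(-m)=\overline{\hu_N(m)}$, $\hu_N(0)=\hu_N(1)=0$, gives $\|u_N\|_{H^{-1/2,\sqrt{\log}}_{r,0}}^2\asymp\log\log N$ and $L(u_N)\asymp\log\log N$. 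Since $L(u)$ is a scalar, $d\Phi(0)$ is an isomorphism of the log-spaces, and the remaining contributions are $O(\|u_N\|^2)=O(\log\log N)$ hence of lower order, the product term dominates and
\begin{equation*}
\big\|d^2\Phi(0)(u_N,u_N)\big\|_{\h^{0,\sqrt{\log}}_{r,0}}\gtrsim|L(u_N)|\;\|d\Phi(0)u_N\|_{\h^{0,\sqrt{\log}}_{r,0}}\asymp(\log\log N)^{3/2},
\end{equation*}
whereas $\|u_N\|^2_{H^{-1/2,\sqrt{\log}}_{r,0}}\asymp\log\log N$; thus $\|d^2\Phi(0)(u_N,u_N)\|/\|u_N\|^2\to\infty$, contradicting the boundedness of $d^2\Phi(0)$.

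The main obstacle is the second paragraph: correctly isolating the phase-normalization term and pinning its coefficients to the critical decay $a_m\asymp1/m$, while verifying that every other quadratic contribution is bounded on $H^{-1/2,\sqrt{\log}}_{r,0}$ --- for only then is one sure the log-critical term is not cancelled. The reassuring consistency check is that the threshold produced by $a_m\asymp1/m$ is precisely $s=-1/2$, matching the critical regularity of the whole problem and confirming that $H^{-1/2,\sqrt{\log}}_{r,0}$ is exactly the borderline at which analyticity must break.
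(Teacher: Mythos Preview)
Your overall strategy --- argue by contradiction via the boundedness of $d^2_0\Phi$ --- is exactly the paper's. But the identification of the unbounded contribution is inverted, and this is a genuine gap, not a detail.

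The paper computes $d^2_0\Phi$ explicitly (Lemma~\ref{lem:second_differential}). Writing $\Phi_n(u)=-\sqrt{n}\,\dfrac{a_n(u)}{\sqrt[+]{n\kappa_n(u)}}\,\Psi_n(u)$ with $\Psi_n(u)=\langle h_n(u),1\rangle$, one has $\Psi_n(0)=0$, $d_0\kappa_n=0$, and --- crucially --- $d_0 a_n=0$. The vanishing of $d_0a_n$ takes real work (it comes down to showing $d_0\delta_n=0$ for the quantities $\delta_n$ encoding the phase normalization), and its consequence is that the normalization factor contributes \emph{nothing} to $d^2_0\Phi$: there is no surviving cross term of the form $L(u)\cdot d_0\Phi(u)$. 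The entire second differential is
\[
d^2_0\Phi_n(\xi)=-\frac{1}{\sqrt{n}}\sum_{k\ge 0,\,k\ne n}\widehat{\xi}(-k)\,\frac{\widehat{\xi}(k-n)}{k-n},
\]
i.e.\ precisely the convolution against the harmonic kernel $1/(n-k)$ that you dismiss as ``bounded''. Your assertion that the $H^s$-estimates from \cite{GKT3,GKT4} persist at the log endpoint is exactly what fails: the paper's Lemma~\ref{lem:convolution}, Lemma~\ref{lem:Q} and Corollary~\ref{coro:Q} construct an explicit $x\in\h^{-1/2,\sqrt{\log}}_{r,0}$ (with $x_n$ built from $\bigl(n\log(n+1)(\log\log(n+1))^{3/2}\bigr)^{-1/2}$) for which this convolution does \emph{not} land in $\h^{0,\sqrt{\log}}_+$.

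So your argument would collapse at the very step you flag as the main obstacle: the phase term you want to isolate is zero, and the ``other quadratic contribution'' you plan to bound is the whole of $d^2_0\Phi$ and is the actual source of unboundedness. The log-critical mechanism is not a scalar functional $L(u)=\sum a_m\widehat u(m)$ with $a_m\asymp 1/m$; it is the genuine bilinear convolution, and its failure on $\h^{-1/2,\sqrt{\log}}_{r,0}$ is the content of Corollary~\ref{coro:Q}.
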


\noindent In fact, we prove that \eqref{eq:Phi-introduction} cannot be extended to a $C^2$-map
$$\Phi : H^{-1/2,\sqrt{\log}}_{r,0}\to\h^{0,\sqrt{\log}}_{r,0}$$ in a neighborhood of the origin.

\medskip

\noindent{\em Notation.}
In addition to the spaces introduced above we will also use the Hardy space 
\begin{equation}\label{eq:H_+}
H^\beta_+:=\big\{f\in H^\beta_c\,\big|\,\widehat{f}(n)=0\,\,\forall\,n<0\big\},\quad\beta\in\R,
\end{equation}
as well as the spaces of complex-valued sequences
\begin{align*}
\h^\beta_+&:=\big\{(z_n)_{n\ge 1}\,\big|\,\sum_{n\ge 1}\n^{2\beta}|z_n|^2<\infty\big\},\\
\h^{\beta,\sqrt{\log}}_+&:=\big\{(z_n)_{n\ge 1}\in\h^\beta_+\,\big|\,
\sum_{n\ge 1}\n^{2\beta}\log(\n+1)\,|z_n|^2<\infty\big\},
\end{align*}
and 
\[
\h^\beta_{\ge 0}:=\big\{(z_n)_{n\ge 0}\,\big|\,\sum_{n\ge 0}\n^{2\beta}|z_n|^2<\infty\big\}\,.
\]
We will denote the norm in $H^\beta_c$ by $\|\cdot\|_\beta$ and set $\|\cdot\|:=\|\cdot\|_0$. Similarly,
the norm in $H^{\beta,\sqrt{\log}}_c$ will be denoted by $\|\cdot\|_{\beta,\sqrt{\log}}$, and
the norm in $\h^{\beta,\sqrt{\log}}_c$ (resp. $\h^\beta_{\ge 0}$)
will be denoted by $\|\cdot\|_{\h^{\beta,\sqrt{\log}}_c}$ (resp. $\|\cdot\|_{\h^\beta_{\ge 0}}$). 
For $\beta=0$ we set $L^2_+\equiv H^0_+$, $\ell^2_+:=\h^0_+$, 
and $\ell^2_{\ge 0}:=\h^0_{\ge 0}$. We will also need the Banach space $\ell^1_+$ of 
complex-valued absolutely summable sequences $(z_n)_{n\ge 1}$ and the quadratic forms
\[
\1 f|g\2:=\frac{1}{2\pi}\int_0^{2\pi}f(x)\overline{g(x)}\,dx,\quad
\1 f,g\2:=\frac{1}{2\pi}\int_0^{2\pi}f(x)g(x)\,dx,\quad f,g\in L^2_c.
\]
Now, take $u\in H^{s}_c$ with $s>-1/2$ and consider the pseudo-differential expression
\begin{equation}\label{eq:L}
L_u:=D-T_u
\end{equation}
where $D:=-i\partial_x$ and $T_u: H^{1+s}_+\to H^{s}_+$ is the {\em Toeplitz operator} 
\begin{equation*}
T_u f:=\Pi(u f),\quad f\in H^{1+s}_+,
\end{equation*}
where $\Pi\equiv\Pi^+ : H^{s}_c \to H^{s}_+$ is the {\em Szeg\H o projector}
\[
\Pi : H^{s}_c\to H^{s}_+,\quad \sum_{n\in\Z}\widehat v(n) e^{i n x}\mapsto\sum_{n\ge 0}\widehat v(n) e^{i n x},
\]
onto the Hardy space $H^{s}_+$, introduced in \eqref{eq:H_+}.
Note that when restricted to $H^{1+s}_+$, $D$ coincides with the Fourier multiplier \eqref{eq:|D|}.
An important role in the integrability of the Benjamun-Ono equation is played by the {\em shift operator} 
$S : H^\beta_+\to H^\beta_+$, $f(x)\mapsto e^{ix} f(x)$, $\beta\in\R$ (cf. \cite{GK}).
It is not hard to see (cf. e.g. \cite[Lemma 1 (ii)]{GKT2}) that for any given $u\in H^s_c$ with $s>-1/2$ the pseudo-differential 
expressions $T_u$ and $L_u$ define bounded linear maps
\begin{equation}\label{eq:T,L,s>-1/2}
T_u :  H^{1+s}_+\to H^{s}_+\quad\text{and}\quad L_u :  H^{1+s}_+\to H^{s}_+.
\end{equation}
By Corollary \ref{coro:Toeplitz_operator} below this does {\em not} extend to log-spaces with $s=-1/2$.

\section{The Lax operator in log-spaces}\label{sec:Lax_operator}
In this Section we establish the basic properties of the Lax operator \eqref{eq:L} with potential $u\in H^{-1/2,\sqrt{\log}}_c$.
In view of Corollary \ref{coro:main_inequality}, for $u\in H^{-1/2,\sqrt{\log}}_c$ the pseudo-differential expression $L_u\equiv D-T_u$ 
defines a continuous map
\begin{equation*}
L_u : H^{1/2}_+\to H^{-1/2}_+\,.
\end{equation*}
In what follows we will think of $L_u$ as an unbounded operator on $H^{-1/2}_+$ with domain $\dom(L_u)=H^{1/2}_+$.
Let us fix $u_0\in L^2_c$ and choose $u\in B_{-1/2,\sqrt{\log}}(u_0)$,
\begin{equation}\label{eq:u_0-local}
B_{-1/2,\sqrt{\log}}(u_0):=\big\{u\in H^{-1/2,\sqrt{\log}}_c\,\big|\,\|u-u_0\|_{-1/2,\sqrt{\log}}<1/(4K_0)\big\}
\end{equation}
where $K_0>0$ is the constant appearing in Corollary \ref{coro:main_inequality}.
As in \cite{GKT1} consider the sesquilinear form
\begin{equation}\label{eq:Q-form}
\Q_{u_0,\lambda}(f,g):=\1-i\partial_xf|g\2-\big\1\Pi(uf)\big|g\big\2-\lambda\1 f|g\2
\end{equation}
where 
\[
\lambda\in\Lambda_{u_0}:=\big\{\lambda\in\C\,\big|\re(\lambda)<-\big(1+\eta(\|u_0\|)\big)\big\},
\]
\[
\eta(\|u_0\|):=C_0\|u_0\|\big(1+\|u_0\|\big),
\]
and $C_0>0$ is a positive constant. One easily sees that, with $C_0>0$ appropriately chosen, 
\cite[Lemma 1]{GKT1} continues to hold for complex-valued $u\in H^s_c$, $-1/2<s\le 0$. Then, we set $s=0$ and argue as in
the proof of \cite[Lemma 2]{GKT1} to obtain the following lemma.

\begin{Lem}\label{lem:Q-estimate}
There exist a constant $K>0$ such that for any $u_0\in L^2_c$ and for any $f,g\in H^{1/2}_+$ one has
\begin{equation*}
\frac{1}{2}\,\|f\|_{1/2}^2\le\big|\big\1\Q_{u_0,\lambda}f\big|f\big\2\big|,\quad
\big|\big\1\Q_{u_0,\lambda}f\big|g\big\2\big|\le\,\big(1+|\lambda|+K\|u_0\|\big)\|f\|_{1/2}\|g\|_{1/2}\,.
\end{equation*}
uniformly in $\lambda\in\Lambda_{u_0}$. If $u_0\in L^2_{r,0}$ and $\lambda\in\Lambda_{u_0}\cap\R$ then
$\big\1\Q_{u_0,\lambda}f\big|f\big\2\ge 0$ for any $f\in H^{1/2}_+$.
\end{Lem}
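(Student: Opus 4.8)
The plan is to reproduce the argument of \cite[Lemma 2]{GKT1}, the only genuinely new ingredient being the treatment of the part of the potential that fails to be square integrable. Write $u=u_0+v$ with $v:=u-u_0$, so that $v\in H^{-1/2,\sqrt{\log}}_c$ with $\|v\|_{-1/2,\sqrt{\log}}<1/(4K_0)$ by the definition of $B_{-1/2,\sqrt{\log}}(u_0)$. I would decompose the form into its three constituents. Since $f,g\in H^{1/2}_+$ are supported on nonnegative frequencies, $\1-i\p_xf|g\2=\sum_{n\ge0}n\,\widehat f(n)\,\overline{\widehat g(n)}$; hence $|\1-i\p_xf|g\2|\le\|f\|_{1/2}\|g\|_{1/2}$ because $n\le\n$, while for $f=g$ one has the elementary identity $\1-i\p_xf|f\2+\|f\|^2=\sum_{n\ge0}(n+1)|\widehat f(n)|^2\ge\|f\|_{1/2}^2$. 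The shift term obeys $|\lambda\1 f|g\2|\le|\lambda|\,\|f\|\,\|g\|\le|\lambda|\,\|f\|_{1/2}\|g\|_{1/2}$, and (since $\Pi$ is selfadjoint and $g\in H^{1/2}_+$) the Toeplitz term equals $\1 uf|g\2$, a bounded form on $H^{1/2}_+\times H^{1/2}_+$ by Corollary \ref{coro:main_inequality}. Everything thus reduces to estimating $\1 uf|g\2$, which I will bound in two different ways depending on whether boundedness or coercivity is at stake.

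For the upper bound I apply Corollary \ref{coro:main_inequality} directly to the full potential $u$, using the continuous embedding $L^2_c\hookrightarrow H^{-1/2,\sqrt{\log}}_c$ and the smallness of $v$:
\[
|\1 uf|g\2|\le K_0\|u\|_{-1/2,\sqrt{\log}}\|f\|_{1/2}\|g\|_{1/2},\qquad
\|u\|_{-1/2,\sqrt{\log}}\le C\|u_0\|+\frac{1}{4K_0},
\]
where $C$ is the norm of that embedding. Summing the three contributions and choosing $K$ appropriately gives the stated upper bound $|\1\Q_{u_0,\lambda}f|g\2|\le(1+|\lambda|+K\|u_0\|)\|f\|_{1/2}\|g\|_{1/2}$; note that this bound is \emph{linear} in $\|u_0\|$.

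For the lower bound I pass to the real part and exploit the sign of $\re\lambda$, and here the square integrable part must be handled by the \emph{split} estimate, i.e.\ the $s=0$, complex-valued version of \cite[Lemma 1]{GKT1}, which for an appropriate $C_0>0$ yields $|\1 u_0 f|f\2|\le\tfrac14\|f\|_{1/2}^2+C_0\|u_0\|(1+\|u_0\|)\|f\|^2$; this is precisely how $\eta(\|u_0\|)=C_0\|u_0\|(1+\|u_0\|)$ is pinned down. Combined with $|\1 vf|f\2|\le\tfrac14\|f\|_{1/2}^2$ (again Corollary \ref{coro:main_inequality} together with the smallness of $v$), this gives $|\1 uf|f\2|\le\tfrac12\|f\|_{1/2}^2+\eta(\|u_0\|)\|f\|^2$. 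Since $\re\lambda<-(1+\eta(\|u_0\|))$ and $\|f\|^2\ge0$,
\[
\re\1\Q_{u_0,\lambda}f|f\2\ge\1-i\p_xf|f\2-|\1 uf|f\2|+\big(1+\eta(\|u_0\|)\big)\|f\|^2,
\]
and inserting $\1-i\p_xf|f\2\ge\|f\|_{1/2}^2-\|f\|^2$ makes the $\|f\|^2$ and $\eta(\|u_0\|)\|f\|^2$ contributions cancel, leaving $\re\1\Q_{u_0,\lambda}f|f\2\ge\tfrac12\|f\|_{1/2}^2$; a fortiori $|\1\Q_{u_0,\lambda}f|f\2|\ge\tfrac12\|f\|_{1/2}^2$. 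Finally, when the potential is real-valued (the situation relevant to the real, mean-zero setting $u_0\in L^2_{r,0}$) and $\lambda\in\R$, the terms $\1-i\p_xf|f\2$, $\1 uf|f\2=\frac{1}{2\pi}\int_0^{2\pi}u|f|^2\,dx$ and $\lambda\|f\|^2$ are all real, so $\1\Q_{u_0,\lambda}f|f\2$ coincides with its real part and is therefore $\ge0$.

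The main obstacle is exactly the rough component $v$: it is not square integrable, so none of the classical multiplier estimates of \cite{GKT1} apply to it, and its contribution can only be controlled through the logarithmic bilinear estimate of Corollary \ref{coro:main_inequality}. The radius $1/(4K_0)$ of the ball $B_{-1/2,\sqrt{\log}}(u_0)$ is calibrated so that this contribution never exceeds $\tfrac14\|f\|_{1/2}\|g\|_{1/2}$, which is what preserves both the coercivity constant $\tfrac12$ and the clean form of the upper bound. Once $v$ is absorbed in this way, the remaining bookkeeping — in particular the dual use of the linear bound for boundedness and of the split bound with quadratic $L^2$ remainder for coercivity — is a faithful copy of the proof of \cite[Lemma 2]{GKT1}.
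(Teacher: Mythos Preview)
Your argument is sound but rests on a misreading of the form. Despite the $u$ appearing in the display \eqref{eq:Q-form} (a slip in the paper), the subscript $u_0$ is the intended potential: Lemma~\ref{lem:Q-estimate} concerns only the sesquilinear form of $L_{u_0}-\lambda$ with $u_0\in L^2_c$. This is confirmed by the paper's own justification just before the statement (``we set $s=0$ and argue as in the proof of \cite[Lemma~2]{GKT1}''), by its immediate use via Lax--Milgram to invert $L_{u_0}-\lambda$ in \eqref{eq:L_0-resolvent}, and by the later application to $\big\1(L_{u_0}-\lambda_\bullet)\,\cdot\,\big|\,\cdot\,\big\2$ in \eqref{eq:1/2-form}. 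The rough remainder $v=u-u_0$ plays no role in this lemma; the paper handles it \emph{separately and afterwards}, through the perturbative Neumann-series argument \eqref{eq:L-decomposition}--\eqref{eq:Neumann_series}, using exactly the bound $\|T_{\widetilde u}\|_{H^{1/2}_+\to H^{-1/2}_+}<1/4$ from Corollary~\ref{coro:main_inequality} that you invoke.

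If you set $v=0$ in your proof, what remains is precisely the paper's route: the split estimate of \cite[Lemma~1]{GKT1} at $s=0$ (extended to complex $u_0$) for the coercivity, and the straightforward multiplier bound for the upper estimate. Keeping $v$ in the picture is harmless for the lower bound --- you still recover the constant $\tfrac12$ --- but it slightly spoils the upper bound as written (you get $1+\tfrac14+|\lambda|+K'\|u_0\|$, and the stray $\tfrac14$ cannot be absorbed into $K\|u_0\|$ when $u_0=0$), and it makes the final positivity claim awkward, since the hypothesis $u_0\in L^2_{r,0}$ does not force a general $u\in B_{-1/2,\sqrt{\log}}(u_0)$ to be real. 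In short: your decomposition $u=u_0+v$ anticipates the next step of the paper rather than belonging to this lemma; once that is stripped out, your proof and the paper's coincide.
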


Now, we apply the Lax-Milgram lemma to obtain from Lemma \ref{lem:Q-estimate} that
for any $\lambda\in\Lambda_{u_0}$ the continuous map
\[
L_{u_0}-\lambda : H^{1/2}_+\to H^{-1/2}_+
\]
is a linear isomorphism such that
\begin{equation}\label{eq:L_0-resolvent}
\big\|(L_{u_0}-\lambda)^{-1}\big\|_{H^{-1/2}_+\to H^{1/2}_+}\le 2\,.
\end{equation}
This implies that for any $u\in B_{-1/2,\sqrt{\log}}(u_0)$ and $\lambda\in\Lambda_{u_0}$ we have
\begin{align}\label{eq:L-decomposition}
L_u-\lambda=L_{u_0}-\lambda-T_{\widetilde{u}}=
\big(I-T_{\widetilde{u}}(L_{u_0}-\lambda)^{-1}\big)(L_{u_0}-\lambda)
\end{align}
where $\widetilde{u}:=u-u_0$ and $I$ is the identity.
It follows from Corollary \ref{coro:main_inequality} and the fact that $u\in B_{-1/2,\sqrt{\log}}(u_0)$ that
\begin{equation}\label{eq:T-estimate}
\|T_{\widetilde{u}}\|_{H^{1/2}_+\to H^{-1/2}_+}\le K_0\|\widetilde{u}\|_{-1/2,\sqrt{\log}}<1/4.
\end{equation}
By combining this with \eqref{eq:L_0-resolvent} we see that
$\|T_{\widetilde{u}}(L_{u_0}-\lambda)^{-1}\|_{H^{-1/2}_+\to H^{-1/2}_+}<1/2$
and hence, in view of \eqref{eq:L-decomposition}, the map $L_u-\lambda : H^{1/2}_+\to H^{-1/2}_+$ is 
a linear isomorphism such that
\begin{equation}\label{eq:Neumann_series}
(L_u-\lambda)^{-1}=(L_{u_0}-\lambda)^{-1}\sum_{k\ge 0}\big[T_{\widetilde{u}}(L_{u_0}-\lambda)^{-1}\big]^k
\end{equation}
where the Neumann series converges in $\LL\big(H^{-1/2}_+,H^{-1/2}_+\big)$ uniformly in $u\in B_{-1/2,\sqrt{\log}}(u_0)$ and 
$\lambda\in\Lambda_{u_0}$. In particular, the map 
\begin{equation*}
(L_u-\lambda)^{-1} : H^{-1/2}_+\to H^{1/2}_+
\end{equation*}
is bounded for $u$ and $\lambda$ chosen as above. As a consequence we obtain

\begin{Th}\label{th:L_u-complex}
For any given $u\in H^{-1/2,\sqrt{\log}}_c$ the pseudo-differential expression \eqref{eq:L} defines a closed operator
$L_u$ on $H^{-1/2}_+$ with domain $\dom(L_u)=H^{1/2}_+$. This operator has a compact resolvent, and hence a discrete
spectrum. Moreover, we have:
\begin{itemize}
\item[(i)] Take $u_0\in L^2_c$ and assume that $u\in B_{-1/2,\sqrt{\log}}(u_0)$. 
Then, the half-plane $\Lambda_{u_0}$ belongs to the resolvent set of $L_u$.
\item[(ii)] The map
\begin{equation}\label{eq:L-lambda}
(u,\lambda)\mapsto(L_u-\lambda)^{-1}, \quad B_{-1/2,\sqrt{\log}}(u_0)\times\Lambda_{u_0}
\to\LL\big(H^{-1/2}_+,H^{1/2}_+\big),
\end{equation}
is well-defined and analytic.
\end{itemize}
\end{Th}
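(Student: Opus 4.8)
The plan is to extract every assertion from the Neumann series \eqref{eq:Neumann_series} already obtained above. The global claims---closedness, the identification of the domain, and compactness of the resolvent---reduce to producing a single resolvent point for an arbitrary $u\in H^{-1/2,\sqrt{\log}}_c$. Since trigonometric polynomials are dense in $H^{-1/2,\sqrt{\log}}_c$, I would choose $u_0\in L^2_c$ (a high-frequency Fourier truncation of $u$) with $\|u-u_0\|_{-1/2,\sqrt{\log}}<1/(4K_0)$, so that $u\in B_{-1/2,\sqrt{\log}}(u_0)$. For this $u_0$ and any $\lambda\in\Lambda_{u_0}$, the construction leading to \eqref{eq:Neumann_series} yields a bounded inverse $(L_u-\lambda)^{-1}:H^{-1/2}_+\to H^{1/2}_+$. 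As $L_u-\lambda:H^{1/2}_+\to H^{-1/2}_+$ then admits a bounded two-sided inverse, it is a closed operator on $H^{-1/2}_+$ with domain $H^{1/2}_+$, and hence so is $L_u$.

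For the compactness of the resolvent I would compose $(L_u-\lambda)^{-1}:H^{-1/2}_+\to H^{1/2}_+$ with the embedding $H^{1/2}_+\hookrightarrow H^{-1/2}_+$, which is compact because $\T$ is compact and $1/2>-1/2$. Thus $(L_u-\lambda)^{-1}$ is compact as an operator on $H^{-1/2}_+$, and a closed operator with compact resolvent has discrete spectrum consisting of isolated eigenvalues of finite algebraic multiplicity with no finite accumulation point. Assertion (i) is then immediate, since the same construction shows that every $\lambda\in\Lambda_{u_0}$ belongs to the resolvent set of $L_u$ whenever $u\in B_{-1/2,\sqrt{\log}}(u_0)$.

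The substantive point is assertion (ii). Here I would show that the resolvent is analytic by exhibiting \eqref{eq:Neumann_series} as a series of jointly analytic terms that converges uniformly on $B_{-1/2,\sqrt{\log}}(u_0)\times\Lambda_{u_0}$. Indeed, $u\mapsto T_{\widetilde{u}}=T_u-T_{u_0}$ is $\C$-affine, hence analytic, as a map into $\LL\big(H^{1/2}_+,H^{-1/2}_+\big)$ by Corollary \ref{coro:main_inequality}, while $\lambda\mapsto(L_{u_0}-\lambda)^{-1}$ is analytic on $\Lambda_{u_0}$ for the fixed potential $u_0$ (e.g. by the first resolvent identity). Consequently each summand $(L_{u_0}-\lambda)^{-1}\big[T_{\widetilde{u}}(L_{u_0}-\lambda)^{-1}\big]^k$ is analytic in $(u,\lambda)$, and by \eqref{eq:L_0-resolvent} and \eqref{eq:T-estimate} its norm into $\LL\big(H^{-1/2}_+,H^{1/2}_+\big)$ is at most $2\cdot(1/2)^k$, uniformly on the stated region. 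The main obstacle is purely to invoke the correct Banach-valued principle: a locally uniform limit of analytic maps between complex Banach spaces is analytic, so the sum in \eqref{eq:Neumann_series} is analytic into $\LL\big(H^{-1/2}_+,H^{1/2}_+\big)$. The only nuance worth checking is that convergence takes place in the finer topology of $\LL\big(H^{-1/2}_+,H^{1/2}_+\big)$ rather than merely in $\LL\big(H^{-1/2}_+,H^{-1/2}_+\big)$, which is exactly what the left factor $(L_{u_0}-\lambda)^{-1}$ supplies.
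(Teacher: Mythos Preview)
Your proposal is correct and follows essentially the same route as the paper: density of $L^2_c$ in $H^{-1/2,\sqrt{\log}}_c$ to place an arbitrary $u$ in some ball $B_{-1/2,\sqrt{\log}}(u_0)$, the Neumann series \eqref{eq:Neumann_series} to produce a resolvent point, compactness of the embedding $H^{1/2}_+\hookrightarrow H^{-1/2}_+$ for the compact resolvent, and uniform convergence of the Neumann series for analyticity. Your treatment of (ii) is in fact slightly more detailed than the paper's, which simply invokes \eqref{eq:L_0-resolvent} and uniform convergence without spelling out the analyticity of each summand or the passage to the $\LL\big(H^{-1/2}_+,H^{1/2}_+\big)$ topology.
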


\begin{proof}[Proof of Theorem \ref{th:L_u-complex}]
We already proved that for a given $u_0\in L^2_c$ and for any $u\in B_{-1/2,\sqrt{\log}}(u_0)$ and
$\lambda\in\Lambda_{u_0}$ the map \eqref{eq:L-lambda} is well-defined.
The analyticity of \eqref{eq:L-lambda} follows from \eqref{eq:L_0-resolvent} and 
the uniform convergence of the Neumann series in \eqref{eq:Neumann_series} in $\LL\big(H^{-1/2}_+,H^{-1/2}_+\big)$.
Since the embedding $H^{1/2}_+\subseteq H^{-1/2}_+$ is compact the map
$(L_u-\lambda)^{-1} : H^{-1/2}_+\to H^{1/2}_+\subseteq H^{-1/2}_+$ is compact for $\lambda\in\Lambda_{u_0}$.
This proves that for $u\in B_{-1/2,\sqrt{\log}}(u_0)$  the unbounded operator $L_u$ on $H^{-1/2}$ with domain 
$\dom(L_u)=H^{1/2}_+$ is closed and has a compact resolvent. Since the radius of the ball $B_{-1/2,\sqrt{\log}}(u_0)$ is
independent of the choice of $u_0\in L^2_c$ the above holds for any $u\in H^{-1/2,\sqrt{\log}}_c$.
\end{proof}

Let us now assume that the potential $u$ is {\em real-valued}, $u\in H^{-1/2,\sqrt{\log}}_{r,0}$, and set
$B^{r,0}_{-1/2,\sqrt{\log}}(u_0):=B_{-1/2,\sqrt{\log}}(u_0)\cap H^{-1/2,\sqrt{\log}}_{r,0}$.
We can then choose $u_0\in L^2_{r,0}$ such that $u\in B^{r,0}_{-1/2,\sqrt{\log}}(u_0)$ and define
the unbounded operator 
\begin{equation}\label{eq:L^+}
L_u^{\rm sym}:=L_u|_{\dom(L_u^{\rm sym} )}
\end{equation}
on $L^2_+$ with domain
\begin{equation}\label{eq:Dom_+}
\dom(L_u^{\rm sym} ):=(L_u-\lambda_\bullet)^{-1}(L^2_+)\subseteq H^{1/2}_+
\end{equation}
for some $\lambda_\bullet\in\Lambda_{u_0}\cap\R$. The map 
\begin{equation}\label{eq:C^+}
C_u^{\rm sym}:=(L_u^{\rm sym}-\lambda_\bullet)^{-1} : L^2_+\to L^2_+
\end{equation}
is a composition of the following bounded linear maps
\begin{equation}\label{eq:short_sequence}
\begin{tikzcd}
L^2_+\arrow[r, hook]&H^{-1/2}_+\arrow[r, near end, "(L_u-\lambda_\bullet)^{-1}"]&
\quad H^{1/2}_+\arrow[r, hook]&L^2_+\,.
\end{tikzcd}
\end{equation}
Hence $C_u^{\rm sym}$ is bounded and compact.
Since $u$ is real-valued, $\1 L_u f|g\2=\1 f|L_ug\2$ for any $f,g\in\dom(L_u^{\rm sym})$.
This implies that $C_u^{\rm sym}$ is symmetric, and hence self-adjoint. In particular, we obtain that $L_u^{\rm sym}$ is 
a self-adjoint operator in $L^2_+$ with domain $\dom(L_u^{\rm sym} )$ and compact resolvent.
Now, we can apply the Hilbert-Schmidt theorem to \eqref{eq:C^+} to obtain
the following specification of Theorem \ref{th:L_u-complex} in the case when 
$u\in H^{-1/2,\sqrt{\log}}_{r,0}$.

\begin{Th}\label{th:L_u-real}
Assume that $u\in H^{-1/2,\sqrt{\log}}_{r,0}$. Then, we have:
\begin{itemize}
\item[(i)] The operator $L_u^{\rm sym}$ defined by \eqref{eq:L^+} 
and  \eqref{eq:Dom_+} is self-adjoint on $L^2_+$ with domain $\dom(L_u^{\rm sym})$ dense 
in $H^{1/2}_+$ (and $L^2_+$). 
\item[(ii)] The operator $L_u^{\rm sym}$ has a compact resolvent and a discrete spectrum
\begin{equation}\label{eq:spec}
\spec(L_u^{\rm sym})=\big\{\lambda_0\le\lambda_1\le...\le\lambda_n\le\lambda_{n+1}\le...\big\}
\end{equation}
that consists of infinitely many simple (real) eigenvalues such that $\lambda_n\to\infty$ as $n\to\infty$ and
\begin{equation}\label{eq:lambda-inequality}
\lambda_{n+1}\ge 1+\lambda_n,\quad n\ge 0.
\end{equation}
The corresponding normalized eigenfunctions $f_n\in H^{1/2}_+$ ($n\ge 0$) form an orthonormal basis in $L^2_+$.
\item[(iii)] The operators $L_u^{\rm sym}$ and $L_u$ (cf. Theorem \ref{th:L_u-complex}) have the same eigenvalues 
and root spaces. In particular, the eigenvalues $\lambda_n$ ($n\ge 0$) of $L_u$ (when ordered as in \eqref{eq:spec})
are {\em simple} and depend {\em real analytically} on the potential $u\in H^{-1/2,\sqrt{\log}}_{r,0}$.
\item[(iv)] Take $u_0\in L^2_{r,0}$, $u\in B^{r,0}_{-1/2,\sqrt{\log}}(u_0)$, 
and $\lambda_\bullet\in\Lambda_{u_0}\cap\R$.
Then, there exist constants $0<c<C$ such that for any $f\in H^{1/2}_+$,
\begin{equation}\label{eq:Q-form(inequality)}
c\|f\|_{1/2}^2\le\big\1(L_u-\lambda_\bullet)f\big|f\big\2\le C(1+|\lambda_\bullet|)\|f\|_{1/2}^2\,.
\end{equation}
The constants in \eqref{eq:Q-form(inequality)} can be chosen uniform in $u\in B^{r,0}_{-1/2,\sqrt{\log}}(u_0)$ and 
$\lambda_\bullet\in\Lambda_{u_0}\cap\R$.
\end{itemize}
\end{Th}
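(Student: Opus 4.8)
The plan is to read off (i) and most of (ii) from the spectral theory of the compact self-adjoint operator $C_u^{\rm sym}$ just constructed, to isolate the gap \eqref{eq:lambda-inequality} as the only genuinely new ingredient, and to obtain (iv) directly from Lemma \ref{lem:Q-estimate}. For (i): the preceding discussion already gives that $C_u^{\rm sym}=(L_u^{\rm sym}-\lambda_\bullet)^{-1}$ is compact, self-adjoint and injective (being a resolvent) on the infinite-dimensional space $L^2_+$, so $0\notin\spec(C_u^{\rm sym})$ and the Hilbert--Schmidt theorem produces an orthonormal basis $(f_n)_{n\ge0}$ of $L^2_+$ of eigenvectors with real eigenvalues $\mu_n\neq0$, $\mu_n\to0$. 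Setting $\lambda_n:=\lambda_\bullet+1/\mu_n$ gives the eigenvalues of $L_u^{\rm sym}$, with $f_n\in\dom(L_u^{\rm sym})\subseteq H^{1/2}_+$; the lower form bound of (iv) shows $L_u^{\rm sym}\ge\lambda_\bullet+c$, so the spectrum is bounded below and, being discrete and infinite, satisfies $\lambda_n\to+\infty$. Finally $\dom(L_u^{\rm sym})=(L_u-\lambda_\bullet)^{-1}(L^2_+)$ is the image of the dense subspace $L^2_+\subset H^{-1/2}_+$ under the isomorphism $(L_u-\lambda_\bullet)^{-1}\colon H^{-1/2}_+\to H^{1/2}_+$ of Theorem \ref{th:L_u-complex}, hence dense in $H^{1/2}_+$ (and in $L^2_+$).

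The heart of (ii) is the gap \eqref{eq:lambda-inequality}, which I would obtain from the shift operator $S$ via the identity
\[
\1 L_u(Sf)\,|\,Sf\2=\1 L_u f\,|\,f\2+\|f\|^2,\qquad f\in H^{1/2}_+.
\]
This follows because $DS=S(D+1)$ and $S$ is an $L^2$-isometry, so that $\1 D(Sf)|Sf\2=\1(D+1)f|f\2$, while $\1 T_u(Sf)|Sf\2=\1 uf|f\2=\1 T_u f|f\2$, the multiplications by $e^{\pm ix}$ cancelling in the pairing. Since $S$ is a unitary of $L^2_+$ onto $\HH_1:=\{g\in H^{1/2}_+:\widehat g(0)=0\}$, a subspace of codimension one in $H^{1/2}_+$, the identity shows that under $g=Sf$ the form $g\mapsto\1 L_u g|g\2$ on $\HH_1$ becomes $f\mapsto\1(L_u+\Id)f|f\2$ on $H^{1/2}_+$; hence the eigenvalues $\nu_n$ of the former are $\nu_n=\lambda_n+1$. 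Cauchy interlacing for a codimension-one restriction gives $\nu_n\le\lambda_{n+1}$, so $\lambda_{n+1}\ge\lambda_n+1$. In particular all eigenvalues are distinct, i.e. simple, which together with the orthonormality of $(f_n)$ completes (ii).

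For (iii), any eigenfunction $f\in H^{1/2}_+$ of $L_u$ satisfies $(L_u-\lambda_\bullet)f=(\lambda-\lambda_\bullet)f\in L^2_+$, hence $f\in\dom(L_u^{\rm sym})$; thus $L_u$ and $L_u^{\rm sym}$ have the same eigenvalues and eigenspaces, and the analogous computation with $(L_u-\lambda)^k$ identifies their root spaces, which collapse to the one-dimensional eigenspaces because $L_u^{\rm sym}$ is self-adjoint. The eigenvalues of $L_u$ are therefore algebraically simple, and since the gaps are $\ge1$ each $\lambda_n$ is enclosed by a fixed contour $\gamma_n$ meeting no other eigenvalue; the associated Riesz projection $P_n(u)=\frac{1}{2\pi i}\oint_{\gamma_n}(\lambda\,\Id-L_u)^{-1}\,d\lambda$ is rank one and, by the analyticity of the resolvent in Theorem \ref{th:L_u-complex}(ii), depends real-analytically on $u$, whence so does $\lambda_n$. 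Part (iv) is then immediate from Lemma \ref{lem:Q-estimate} applied with $\lambda=\lambda_\bullet\in\Lambda_{u_0}\cap\R$ and $u_0\in L^2_{r,0}$: there $\1(L_u-\lambda_\bullet)f|f\2=\1\Q_{u_0,\lambda_\bullet}f|f\2\ge0$, so the first inequality of the lemma reads $\tfrac12\|f\|_{1/2}^2\le\1(L_u-\lambda_\bullet)f|f\2$ (giving $c=\tfrac12$), while the second inequality yields the upper bound with $C$ absorbing the fixed quantity $K\|u_0\|$; both bounds are uniform over $u\in B^{r,0}_{-1/2,\sqrt{\log}}(u_0)$ and $\lambda_\bullet\in\Lambda_{u_0}\cap\R$.

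The step I expect to require the most care is the low-regularity justification of the shift identity, specifically $\1 T_u(Sf)|Sf\2=\1 T_u f|f\2$: at $u\in H^{-1/2,\sqrt{\log}}_{r,0}$ the pairing $\1 uf|f\2$ is defined only through the bounded extension of the Toeplitz form furnished by Corollary \ref{coro:main_inequality}, so the formal cancellation of $e^{\pm ix}$ must be obtained by approximating $u$ with smooth potentials and passing to the limit in that bound rather than by genuine pointwise multiplication. Once this identity and the codimension-one interlacing are in place, the remaining arguments are standard.
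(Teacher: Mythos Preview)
Your treatment of (i)--(iii) is essentially sound and parallel to the paper's. In particular, your reconstruction of the gap inequality via the shift identity and Cauchy interlacing is exactly the argument the paper imports by citing \cite[Proposition~2.2]{GK}, and the low-regularity justification of $\1 T_u(Sf)\,|\,Sf\2=\1 T_uf\,|\,f\2$ that you flag is indeed the only delicate point there. For (iii) the paper argues slightly differently---it restricts $L_u$ to the finite-dimensional root space and uses symmetry of that restriction---but your observation that any root vector of $L_u$ automatically lands in $\dom(L_u^{\rm sym})$ works equally well.

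The genuine gap is in (iv). Lemma~\ref{lem:Q-estimate} is stated and proved for potentials in $L^2_c$ (the paper explicitly sets $s=0$ just before the lemma, and the bounds involve $\|u_0\|$), so it controls $\1(L_{u_0}-\lambda_\bullet)f\,|\,f\2$, \emph{not} $\1(L_u-\lambda_\bullet)f\,|\,f\2$ for $u\in B^{r,0}_{-1/2,\sqrt{\log}}(u_0)$. Your identification $\1(L_u-\lambda_\bullet)f\,|\,f\2=\Q_{u_0,\lambda_\bullet}(f,f)$ is therefore incorrect; you have not handled the perturbation $T_{\widetilde u}$ with $\widetilde u=u-u_0$. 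The fix is short: write
\[
\1(L_u-\lambda_\bullet)f\,|\,f\2=\1(L_{u_0}-\lambda_\bullet)f\,|\,f\2-\1 T_{\widetilde u}f\,|\,f\2,
\]
invoke \eqref{eq:T-estimate} to bound $|\1 T_{\widetilde u}f\,|\,f\2|<\tfrac14\|f\|_{1/2}^2$, and combine with Lemma~\ref{lem:Q-estimate} to obtain the two-sided estimate with $c=\tfrac14$, uniformly in $u$ and $\lambda_\bullet$.

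For comparison, the paper proves (iv) by a more elaborate route: it factorises $L_u-\lambda_\bullet=R(L_{u_0}-\lambda_\bullet)=(L_{u_0}-\lambda_\bullet)\widetilde R$ with $R,\widetilde R$ close to the identity, builds their square roots as convergent power series, checks $\1\sqrt{R}f\,|\,g\2=\1 f\,|\,\sqrt{\widetilde R}g\2$, and rewrites
\[
\1(L_u-\lambda_\bullet)f\,|\,f\2=\1(L_{u_0}-\lambda_\bullet)\sqrt{\widetilde R}f\,\big|\,\sqrt{\widetilde R}f\2,
\]
thereby reducing to Lemma~\ref{lem:Q-estimate} via the isomorphism $\sqrt{\widetilde R}$ on $H^{1/2}_+$. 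Your (corrected) direct perturbation is simpler and yields the same uniform constants; the paper's square-root trick has the conceptual advantage of exhibiting the form for $u$ as a genuine pullback of the form for $u_0$, but this is not needed for the bare statement of (iv).
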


\begin{proof}[Proof of Theorem \ref{th:L_u-real}]
Assume that $u\in H^{-1/2,\sqrt{\log}}_{r,0}$.
The fact that $L_u^{\rm sym}$ is selfadjoint is already proved. The density of the domain 
$\dom(L_u^{\rm sym})$ in $H^{1/2}_+$ follows from \eqref{eq:short_sequence} since $L^2_+$ is dense in 
$H^{-1/2}_+$ and $(L_u\textcolor{red}{-}\lambda_\bullet)^{-1} : H^{-1/2}_+\to H^{1/2}_+$ is a linear isomorphism. 
This proves item (i).

In order to prove item (ii), recall that $C_u^{\rm sym}\equiv(L_u^{\rm sym}-\lambda_\bullet)^{-1} : L^2_+\to L^2_+$ 
is compact and symmetric with respect to the scalar product $\1\cdot,\cdot\2$ on $L^2_+$.
Hence, we can apply the Hilbert-Schmidt theorem to conclude that there exists an orthonormal basis of eigenfunctions of $C_u^{\rm sym}$
in $L^2_+$. Since the kernel of $C_u^{\rm sym}$ is trivial, zero is not an eigenvalue of $C_u^{\rm sym}$. Hence there are
infinitely many real eigenvalues $\mu_n$ ($n\ge 0$) of $C_u^{\rm sym}$ that converge to zero in $\R$. 
In view of \eqref{eq:C^+} we then conclude that $\mu_n=1/(\lambda_n-\lambda_\bullet)$ where $\lambda_n$ ($n\ge 0$)
is the spectrum of $L_u^{\rm sym}$. The fact the spectrum of $L_u^{\rm sym}$ is bounded below follows from the fact that
any eigenfunction of $L_u^{\rm sym}$ is also an eigenfunction of $L_u$ with the same eigenvalue. This follows directly from
the definition \eqref{eq:L^+} and the inclusion $\dom(L_u^{\rm sym})\subseteq H^{1/2}_+\equiv\dom(L_u)$.
The simplicity of the spectrum of $L_u^{\rm sym}$ and the inequality \eqref{eq:lambda-inequality} can be obtained from
the max-min principle in the same way as in \cite[Proposition 2.2]{GK}.

Let us now prove item (iii). We already mentioned that any eigenfunction of $L_u^{\rm sym}$ is an eigenfunction of $L_u$ with 
the same eigenvalue. Let $\lambda\in\C$ be an eigenvalue of $L_u$ and let $V_\lambda\subseteq H^{1/2}_+$ be its 
(finite dimensional) root space. Since the root space is an invariant subspace of $L_u$ and since the operator 
$L_u|_{V_\lambda} : V_\lambda\to V_\lambda$ is symmetric with respect to the restriction of the scalar product 
$\1\cdot|\cdot\2$ to $V_\lambda$, we conclude that $\lambda$ is real and $V_\lambda$ consists of eigenvectors of $L_u$ with 
eigenvalue $\lambda$. The same argument shows that the eigenspaces $V_\lambda$ and $V_\mu$ of $L_u$ corresponding to 
different eigenvalues $\lambda\ne\mu$ are orthogonal in $L^2_+$. Hence, if $\lambda\in\R$ is an eigenvalue of $L_u$ that is not an 
eigenvalue of $L_u^{\rm sym}$ then its eigenfunction $f$ is orthogonal to the eigenfunctions $f_n$ ($n\ge 0$) of $L_u^{\rm sym}$, 
that contradicts the fact that $f_n$ ($n\ge 0$) is an orthonormal basis in $L^2_+$. Hence, $\lambda$ is an eigenvalue of $L_u^{\rm sym}$.
A similar argument also shows that the eigenspaces $V_\lambda$ of $L_u$ are one dimensional. This proves the first statement in $(iii)$. 
The analytic dependence of $\lambda_n\equiv\lambda_n(u)$ with $n\ge 0$ on $u\in H^{-1/2,\sqrt{\log}}_{r,0}$ then 
follows from Theorem \ref{th:L_u-complex} (ii), the simplicity of the eigenvalue $\lambda_n$, and the properties of 
the Riesz's projector.

(iv) Choose $u,u_0$ and $\lambda_\bullet$ as in item (iv).
As in \eqref{eq:L-decomposition} we have
\begin{equation}\label{eq:L,S-relation}
L_u-\lambda_\bullet=R(L_{u_0}-\lambda_\bullet)=(L_{u_0}-\lambda_\bullet)\widetilde{R}
\end{equation}
where 
\begin{equation}\label{eq:S}
R:=I-T_{\widetilde{u}}(L_{u_0}-\lambda_\bullet)^{-1}\quad\text{\rm and}\quad
\widetilde{R}:=I-(L_{u_0}-\lambda_\bullet)^{-1}T_{\widetilde{u}}
\end{equation}
and $\widetilde{u}\equiv u-u_0$. It follows from \eqref{eq:L_0-resolvent} and \eqref{eq:T-estimate} that
\[
\|T_{\widetilde{u}}(L_{u_0}-\lambda_\bullet)^{-1}\|_{H^{-1/2}_+\to H^{-1/2}_+}<1/2,\quad
\|(L_{u_0}-\lambda_\bullet)^{-1}T_{\widetilde{u}}\|_{H^{1/2}_+\to H^{1/2}_+}<1/2,
\]
uniformly on the choice of $u\in B_{-1/2,\sqrt{\log}}$ and $\lambda_\bullet\in\Lambda_{u_0}\cap\R$.
This implies that the operators $R : H^{-1/2}_+\to H^{-1/2}_+$ and
$\widetilde{R} : H^{1/2}_+\to H^{1/2}_+$ are linear isomorphisms
that have well-defined (as convergent power series) square roots
\[
\sqrt{R} : H^{-1/2}_+\to H^{-1/2}_+\quad\text{\rm and}\quad
\sqrt{\widetilde{R}} : H^{1/2}_+\to H^{1/2}_+
\]
that are also linear isomorphisms. Since the potentials $u,u_0$ and  the constant $\lambda_\bullet$
are real we obtain from \eqref{eq:S} that $\1 R f|g\2=\1 f|\widetilde{R}g\2$ for any $f\in H^{-1/2}_+$ and $g\in H^{1/2}_+$.
This, together with the second equality in \eqref{eq:L,S-relation} and the definition of the square roots as convergent power series
implies that
\[
\big\1\sqrt{R}f\big|g\big\2=\big\1 f\big|\sqrt{\widetilde{R}}g\big\2\quad\text{\rm and}\quad
\sqrt{R}(L_{u_0}-\lambda_\bullet)=(L_{u_0}-\lambda_\bullet)\sqrt{\widetilde{R}}\,.
\]
Then, for any $f\in H^{1/2}_+$,
\begin{align}
\big\1(L_u-\lambda_\bullet)f\big|f\big\2&=\big\1 R(L_{u_0}-\lambda_\bullet)f\big|f\big\2=
\big\1\sqrt{R}(L_{u_0}-\lambda_\bullet)f\big|\sqrt{\widetilde{R}}f\big\2\nonumber\\
&=\big\1(L_{u_0}-\lambda_\bullet)\sqrt{\widetilde{R}}f\big|\sqrt{\widetilde{R}}f\big\2.\label{eq:1/2-form}
\end{align}
Item (iv) now follows from \eqref{eq:1/2-form} and Lemma \ref{lem:Q-estimate}.
\end{proof}

Let us now take $u\in H^{-1/2,\sqrt{\log}}_{r,0}$ and choose $u_0\in L^2_{r,0}$ and
$\lambda_\bullet\in\Lambda_{u_0}$ as in Theorem \ref{th:L_u-real} (iv).
Following \cite{GK}, we consider the {\em $n$-th gap}
\[
\gamma_n(u):=\lambda_n(u)-\lambda_{n-1}(u)-1\ge 0,\quad n\ge 1,
\]
that is well-defined by Theorem \ref{th:L_u-real} (ii) and non-negative in view of \eqref{eq:lambda-inequality}. 
We have
\begin{equation}\label{eq:gamma-relation}
0\le\sum_{k=1}^n\gamma_k(u)=\lambda_n(u)-\lambda_0(u)-n, 
\end{equation}
and hence
\begin{equation}\label{eq:lambda-estimate1}
\lambda_n(u)\ge n+\lambda_0(u),\quad n\ge 1.
\end{equation}
It follows from the estimate \eqref{eq:Q-form(inequality)} that $\lambda_0(u)>\lambda_\bullet$ which implies
\begin{equation}\label{eq:lambda-estimate2}
\lambda_0(u)\ge-\big(1+\eta(\|u_0\|)\big)
\end{equation} 
in view of the arbitrariness of the choice of $\lambda_\bullet\in\Lambda_{u_0}$.
Hence, for a given $\lambda_\bullet\in\Lambda_{u_0}\cap\R$ we have that
\begin{equation}\label{eq:lambda-estimate3}
\lambda_0(u)-\lambda_\bullet\ge-\Big(\lambda_\bullet+\big(1+\eta(\|u_0\|)\big)\Big)>0
\end{equation} 
uniformly in $u\in B^{r,0}_{-1/2,\sqrt{\log}}(u_0)$.
Theorem \ref{th:L_u-complex} allows us to define the meromorphic function (cf. \cite{GK})
\[
\mathcal{H}_\lambda(u):= \big\1(L_u+\lambda)^{-1}1\,\big|\,1\big\2
\]
with poles at $\{-\lambda_n(u)\,|\,n\ge 0\}$.
By arguing as in the proof of \cite[Proposition 3.1]{GK} one sees that 
\[
\mathcal{H}_\lambda(u)=
\frac{1}{\lambda_0+\lambda}\prod_{n=1}^\infty\Big(1-\frac{\gamma_n}{\lambda_n+\lambda}\Big)
\]
where the infinite product converges absolutely and we set $\gamma_n\equiv\gamma_n(u)$, $n\ge 1$, 
and $\lambda_n\equiv\lambda_n(u)$, $n\ge 0$, for simplicity of notation. The arguments in the proof 
of \cite[Proposition 3.1]{GK} also show that one has the trace formula
\begin{equation}\label{eq:trace_formula}
\sum_{n=1}^\infty\gamma_n(u)=-\lambda_0(u)\ge 0
\end{equation}
where the sequence $\big(\gamma_n(u)\big)_{n\ge 1}$ is absolutely summable.
As a consequence from \eqref{eq:gamma-relation} and \eqref{eq:trace_formula} we obtain that
$\lambda_n-n=-\sum_{k>n}\gamma_k$ and hence $\lambda_n(u)\le n$.
By combining this with \eqref{eq:lambda-estimate1} and  \eqref{eq:lambda-estimate2} we then conclude that
\begin{equation}\label{eq:lambda-estimate}
n-\big(1+\eta(\|u_0\|)\big)\le n+\lambda_0(u)\le\lambda_n(u)\le n,\quad n\ge 0,
\end{equation}
uniformly in $u\in B^{r,0}_{-1/2,\sqrt{\log}}(u_0)$. Note that the estimate
\begin{equation}\label{eq:lambda-estimate*}
n+\lambda_0(u)\le\lambda_n(u)\le n,\quad n\ge 0,
\end{equation}
holds for any $u\in H^{-1/2,\sqrt{\log}}_{r,0}$.
Further, note that the statements of Lemma 2.5 and Lemma 2.7 in \cite{GK} are purely algebraic in nature 
and continue to hold for $u\in H^{-1/2,\sqrt{\log}}_{r,0}$ since, by Theorem \ref{th:L_u-real}, 
all the quantities involved are well-defined. In particular, we obtain that
\begin{equation*}
\1 f_0(u)|1\2\ne 0\quad\text{\rm and}\quad\1 f_n(u)|Sf_{n-1}(u)\2\ne 0,\quad n\ge 1,
\end{equation*}
where $f_n(u)$ ($n\ge 0$) is an orthonormal basis of eigenfunctions of $L_u$ in $L^2_+$ (see Theorem \ref{th:L_u-real} (ii)).
This allows us to choose the orthonormal basis $f_n(u)$ ($n\ge 0$) in a unique way by imposing the conditions
(cf. \cite[Definition 2.8]{GK})
\begin{equation}\label{eq:normalizing_conditions}
\1 f_0(u)|1\2>0\quad\text{\rm and}\quad\1 f_n(u)|Sf_{n-1}(u)\2>0,\quad n\ge 1.
\end{equation}
In what follows we will assume that $f_n(u)$, $n\ge 0$, denotes this particular orthonormal basis.
Note that Theorem \ref{th:L_u-complex} (ii), the simplicity of the eigenvalues of $L_u$, and 
the properties of the Riesz's projector imply that for any given $n\ge 0$ the map
\begin{equation}\label{eq:f_n-analyticity}
f_n : H^{-1/2,\sqrt{\log}}_{r,0}\to H^{-1/2}_+,\quad u\mapsto f_n(u),
\end{equation}
is real-analytic.\footnote{Here we ignore the complex structure on $H^{-1/2}_+$ and consider the space as real.}

As above, we fix $u_0\in L^2_{r,0}$ and choose  $\lambda_\bullet\in\Lambda_{u_0}\cap\R$. 
By Theorem \ref{th:L_u-real} (iv) there exist constants $0<c<C$ such that inequality \eqref{eq:Q-form(inequality)} 
holds uniformly in $u\in B^{r,0}_{-1/2,\sqrt{\log}}(u_0)$. 
In particular, this implies that the sesquilinear form
\begin{equation}\label{eq:Q-form(general)}
\Q_{u,\lambda_\bullet} : H^{1/2}_+\times H^{1/2}_+\to\C,\quad (f,g)\mapsto\big\1(L_u-\lambda_\bullet)f\big|g\big\2,
\end{equation}
gives an equivalent Hilbert structure in $H^{1/2}_+$. Since the system of eigenfunction
\begin{equation}\label{eq:basis}
\widetilde{f}_n:=f_n/\sqrt{\lambda_n-\lambda_\bullet},\quad n\ge 0,
\end{equation} 
is complete in $H^{1/2}_+$ and orthonormal with respect to \eqref{eq:Q-form(general)} we conclude that 
it is a basis in $H^{1/2}_+$. (Recall from \eqref{eq:lambda-estimate3} that $\lambda_n>\lambda_\bullet$.)
By the Parseval's  identity, for any $f\in H^{1/2}_+$,
\[
\Q_{u,\lambda_\bullet}(f,f)=\sum_{n=0}^\infty\big|\Q_{u,\lambda_\bullet}(f,\widetilde{f}_n)\big|^2
=\sum_{n=0}^\infty(\lambda_n-\lambda_\bullet)\big|\1 f|f_n\2|^2\,.
\]
This, together with \eqref{eq:Q-form(inequality)} implies that
\[
c\,\|f\|_{1/2}^2\le\sum_{n=0}^\infty(\lambda_n-\lambda_\bullet)\,\big|\1f|f_n\2\big|^2\le
C(1+|\lambda_\bullet|)\|f\|_{1/2}^2.
\]
By combining this with \eqref{eq:lambda-estimate} we then see that there exist constants $0<\varkappa_1<1$ 
independent of the choice of $u\in B^{r,0}_{-1/2,\sqrt{\log}}(u_0)$ such that for any $f\in H^{1/2}_+$,
\begin{equation}\label{eq:uniform_estimate_norms1}
\varkappa_1^2\,\sum_{n=0}^\infty\big(\n+1\big)\,|\widehat{f}(n)|^2\le\sum_{n=0}^\infty\big(\n+1\big)\,\big|\1f|f_n\2\big|^2
\le\frac{1}{\varkappa_1^2}\,\sum_{n=0}^\infty\big(\n+1\big)\,|\widehat{f}(n)|^2.
\end{equation}
Let us now consider the Fourier transform corresponding to the orthonormal basis $f_n$ ($n\ge 0$), 
\begin{equation}\label{eq:K_0}
K_{u;0} : L^2_+\to\ell^2_{\ge 0},\quad f\mapsto\big(\1 f|f_n\2\big)_{n\ge 0},
\end{equation}
together with its restriction to $H^{1/2}_+$,
\begin{equation}\label{eq:K_1/2}
K_{u;1/2}:=K_{u;0}|_{H^{1/2}_+} : H^{1/2}_+\to\h^{1/2}_{\ge 0}.
\end{equation}
Note that the image of \eqref{eq:K_1/2} is dense in $\h^{1/2}_{\ge 0}$ since it contains all finite sequences 
$c_n\in\C$ ($0\le n\le N$) with $N\ge 0$.
This, together with \eqref{eq:uniform_estimate_norms1} implies that \eqref{eq:K_1/2} is a linear isomorphism. 

\begin{Rem}\label{rem:basis}
The argument above also show that for any $u\in H^{-1/2,\sqrt{\log}}_{r,0}$ the orthonormal basis 
$f_n\in H^{1/2}_+$ ($n\ge 0$) in $L^2$ gives a basis in $H^{1/2}_+$ such that for any $f\in H^{1/2}_+$
the Fourier series $f=\sum_{n\ge 0}\1 f|f_n\2 f_n$ converges in $H^{1/2}_+$ and 
$\big(\1 f|f_n\2\big)_{n\ge 0}\in\h^{1/2}_{\ge 0}$. Since \eqref{eq:K_1/2} is an isomorphism we also see that
for any given $(x_n)_{n\ge 0}\in\h^{1/2}_{\ge 0}$ there exists $f\in H^{1/2}_+$ such that $\1f|f_n\2=x_n$, $n\ge 0$.
\end{Rem}

We then interpolate between the maps \eqref{eq:K_0} and \eqref{eq:K_1/2} as well as their inverses 
$K_{u;0}^{-1} : \ell^2_{\ge 0}\to L^2_+$ and $K_{u;1/2}^{-1} :  \h^{1/2}_{\ge 0}\to H^{1/2}_+$
to conclude (cf. \cite[Example 3, Appendix to \S\,IX.4]{ReedSimon}) that for any $u\in B^{r,0}_{-1/2,\sqrt{\log}}(u_0)$,
$0\le\theta\le 1$, and for any $f\in H^{1/2}_+$,
\begin{equation*}
\varkappa_1^2\sum_{n=0}^\infty\big(\n+1\big)^\theta\,|\widehat{f}(n)|^2\le
\sum_{n=0}^\infty\big(\n+1\big)^\theta\,\big|\1f|f_n\2\big|^2\le 
\frac{1}{\varkappa_1^2}\sum_{n=0}^\infty\big(\n+1\big)^\theta\,|\widehat{f}(n)|^2.
\end{equation*}
By integrating these inequalities with respect to $\theta$ on the interval $0\le\theta\le 1$ we obtain that
for any $f\in H^{1/2}_+$,
\begin{equation}\label{eq:uniform_estimate_norms2}
\varkappa_1^2\,\|f\|_{1/2,1/\sqrt{\log}}^2\le
\sum_{n=0}^\infty\frac{\n}{\log(\n+1)}\,\big|\1f|f_n\2\big|^2\le 
\frac{1}{\varkappa_1^2}\,\|f\|_{1/2,1/\sqrt{\log}}^2
\end{equation}
with $0<\varkappa_1<1$ independent of the choice of $u\in B^{r,0}_{-1/2,\sqrt{\log}}(u_0)$.
This inequality implies that the map
\begin{equation}\label{eq:K_1/2-}
K_{u;1/2,1/\sqrt{\log}} : H^{1/2,1/\sqrt{\log}}_+\to\h^{1/2,1/\sqrt{\log}}_{\ge 0},\quad f\mapsto\big(\1 f|f_n\2\big)_{n\ge 0},
\end{equation}
is a linear isomorphism. Hence, the map conjugate to \eqref{eq:K_1/2-} with respect to the $L^2_+$- and the 
$\ell^2_{\ge}$-pairing,
\[
K_{u;1/2,1/\sqrt{\log}}^* : \h^{-1/2,\sqrt{\log}}_{\ge 0}\to H^{-1/2,\sqrt{\log}}_+,
\]
and its inverse
\begin{subequations}
\begin{equation}\label{eq:K_-1/2+}
K_{u;-1/2,\sqrt{\log}}:=\big(K_{u;1/2,1/\sqrt{\log}}^*\big)^{-1} : 
H^{-1/2,\sqrt{\log}}_+\to\h^{-1/2,\sqrt{\log}}_{\ge 0}
\end{equation}
are also linear isomorphisms. It is a straightforward task to see that
\begin{equation}\label{eq:K_-1/2+(bis)}
K_{u;-1/2,\sqrt{\log}}\,f=\big(\1f|f_n\2\big)_{n\ge 0},\quad\forall f\in H^{-1/2,\sqrt{\log}}_+,
\end{equation}
\end{subequations}
and that, in view of \eqref{eq:uniform_estimate_norms2},
\begin{equation}\label{eq:uniform_estimate_norms3}
\varkappa_1\|f\|_{-1/2,\sqrt{\log}}\le
\big\|K_{u;-1/2,\sqrt{\log}}\,f\big\|_{\h^{-1/2,\sqrt{\log}}_{\ge 0}}
\le\frac{1}{\varkappa_1}\,\|f\|_{-1/2,\sqrt{\log}}
\end{equation}
uniformly in $f\in H^{-1/2,\sqrt{\log}}_+$ and $u\in B^{r,0}_{-1/2,\sqrt{\log}}(u_0)$. 
Let
\[
\widetilde{\dom}(L_u):=(L_u-\lambda_\bullet)^{-1}\big(H^{-1/2,\sqrt{\log}}_+\big)\subseteq H^{1/2}_+
\]
be the domain of the operator $L_u$ in $H^{-1/2,\sqrt{\log}}_+$.
For any $u\in H^{-1/2,\sqrt{\log}}_{r,0}$ and $\lambda\in\R$ we have the following commutative diagram
\begin{equation}\label{eq:diagram1}
\begin{tikzcd}
H^{1/2}_+\arrow[d,"L_u-\lambda"]&\widetilde{\dom}(L_u)\arrow[l,hook]
\arrow[d,"L_u-\lambda"]\arrow[r,"K_u^{(1)}"]&
\h^{1/2,\sqrt{\log}}_{\ge 0}\arrow[d,"\ell_u-\lambda"]\\
H^{-1/2}_+&H^{-1/2,\sqrt{\log}}_+\arrow[l,hook]
\arrow[ur,"D_{u,\lambda}" near end]\arrow[r,"K_u^{(2)}"]&\h^{-1/2,\sqrt{\log}}_{\ge 0}
\end{tikzcd}
\end{equation}
where $\ell_u : \h^{1/2,\sqrt{\log}}_{\ge 0}\to\h^{-1/2,\sqrt{\log}}_{\ge 0}$ is the multiplication
$(z_n)_{n\ge 0}\mapsto\big(\lambda_n z_n\big)_{n\ge 0}$, and the maps $K_u^{(1)}$ and $K_u^{(2)}$ stand 
for $K_{u;1/2}|_{\widetilde{\dom}(L_u)}$ and $K_{u;-1/2,\sqrt{\log}}$ (see \eqref{eq:K_-1/2+},\eqref{eq:K_-1/2+(bis)}). 
For $\lambda$ in the resolvent set of $L_u$ there is a well-defined ``diagonal'' map (cf. \eqref{eq:lambda-estimate})
\begin{equation}\label{eq:D-operator}
D_{u,\lambda}:=(\ell_u-\lambda)^{-1}K_{u;-1/2,\sqrt{\log}}\,.
\end{equation}
The operator  $D_{u;\lambda}$ will play an important role in our study of the Birkhoff map. 
The map $K_{u;-1/2,\sqrt{\log}}$ as well the maps $\ell_u-\lambda$, and $D_{u;\lambda}$
(for $\lambda$ in the resolvent set of $L_u$) are linear isomorphisms.

\begin{Rem}\label{rem:diagram1}
By Corollary \ref{coro:Toeplitz_operator} the space $\widetilde{\dom}(L_u)$ does {\em not} coincide with 
$H^{1/2,\sqrt{\log}}_+$. This is in contrast with the case of the scale $H^s_{r,0}$, $s>-1/2$ 
(cf. \cite[Lemma 6]{GKT1}).
\end{Rem}

We will need an (improved) explicit formula for the constant appearing on the left side of \eqref{eq:uniform_estimate_norms3}.
To this end, we fix $u\in H^{-1/2,\sqrt{\log}}_{r,0}$ and consider the operator $L_u$ on $H^{-1/2}_+$ 
with domain $H^{1/2}_+$ (cf. Theorem \ref{th:L_u-complex}). It follows from Remark \ref{rem:basis} that
for any $f\in H^{1/2}_+$,
\[
\big\1(L_u-\lambda_0)f\big|f\big\2=\sum_{n\ge 0}(\lambda_n-\lambda_0)|\1f|f_n\2|^2\ge 0,
\]
where $\lambda_0\equiv\lambda_0(u)$ is the first eigenvalue of $L_u$. 
This together with Corollary \ref{coro:main_inequality} then implies that
\begin{align*}
\|f\|^2&\le\big\1(L_u-\lambda_0+1)f\big|f\big\2=\1-i\partial_x f|f\2-\big\1\Pi(uf)\big|f\big\2+(-\lambda_0+1)\|f\|^2\nonumber\\
&\le\big(2-\lambda_0+K_0\|u\|_{-1/2,\sqrt{\log}}\big)\|f\|_{1/2}^2,\quad f\in H^{1/2}_+\,.
\end{align*}
Hence, for any $f\in H^{1/2}_+$,
\[
\|f\|^2\le\big\1(L_u-\lambda_0+1)f\big|f\big\2\le M_u\|f\|_{1/2}^2,
\]
where 
\begin{equation}\label{eq:M_u}
M_u:=2-\lambda_0+K_0\|u\|_{-1/2,\sqrt{\log}}.
\end{equation}
This inequality together with \eqref{eq:lambda-estimate*} implies that
there exists a constant $C_{\lambda_0}\ge 1$ (chosen uniformly on bounded sets of $\lambda_0$ in $\R_{\le 0}$)
such that for any $f\in H^{1/2}_+$,
\begin{equation}\label{eq:M_u-inequality}
\|K_{u;1/2}f\|_{h^{1/2}_{\ge 0}}^2\le C_{\lambda_0} M_u\|f\|_{1/2}^2,
\end{equation}
where $K_{u;1/2} : H^{1/2}_+\to\h^{1/2}_{\ge 0}$ is the linear isomorphism \eqref{eq:K_1/2}.
As above, we then interpolate between the maps $K_{u;1/2} : H^{1/2}_+\to\h^{1/2}_{\ge 0}$ and 
$K_{u;0} : L^2_+\to\ell^2_{\ge 0}$ to obtain that for any $0\le\theta\le 1$ and $f\in H^{1/2}_+$,
\[
\sum_{n\ge 0}\big(\n+1\big)^\theta|\1f|f_n\2|^2\le 
\big(C_{\lambda_0} M_u\big)^\theta\sum_{n\ge 1}\big(\n+1\big)^\theta|\widehat{f}(n)|^2
\]
where, in order to accommodate the slight change of the weights, we choose $C_{\lambda_0}\ge 1$ larger if necessary. 
By integrating this inequality with respect to $\theta$ on the interval $0\le\theta\le 1$ we conclude that 
for any $f\in H^{1/2}_+$,
\[
\sum_{n\ge 0}\frac{\n}{\log(\n+1)}\,|\1f|f_n\2|^2\le 
C_{\lambda_0} M_u\sum_{n\ge 1}\frac{\n}{\log(\n+1)}\,|\widehat{f}(n)|^2.
\]
This implies that for any $f\in H^{1/2,1/\sqrt{\log}}_+$,
\[
\big\|K_{u;1/2,1/\sqrt{\log}} f\big\|_{\h^{1/2,1/\sqrt{\log}}_{\ge 0}}^2\le C_{\lambda_0} M_u\|f\|_{1/2,1/\sqrt{\log}}^2\,.
\]
We then argue by duality (as in the proof of \eqref{eq:uniform_estimate_norms3}) to conclude that
\begin{equation}\label{eq:uniform_estimate_norms3*}
\|f\|_{-1/2,\sqrt{\log}}^2\le C_{\lambda_0} M_u\big\|K_{u;-1/2,\sqrt{\log}}\,f\big\|_{\h^{-1/2,\sqrt{\log}}_{\ge 0}}^2
\end{equation}
where $K_{u;-1/2,\sqrt{\log}} : H^{-1/2,\sqrt{\log}}_{r,0}\to\h^{-1/2,\sqrt{\log}}_{\ge 0}$ is the linear
isomorphism \eqref{eq:K_-1/2+}, \eqref{eq:K_-1/2+(bis)}.
Further, we set $f\equiv\Pi u$ in \eqref{eq:uniform_estimate_norms3*} and use that
$\1\Pi u|f_n\2=-\1 L_u 1|f_n\2=-\lambda_n\1 1|f_n\2$, $n\ge 0$, to conclude from \eqref{eq:lambda-estimate*} that
\begin{equation}\label{eq:uniform_estimate_norms4*}
\frac{1}{2}\,\|u\|_{-1/2,\sqrt{\log}}^2\le 
C_{\lambda_0} M_u\big\|\big(\1 1|f_n(u)\2\big)_{n\ge 0}\big\|_{\h^{1/2,\sqrt{\log}}_{\ge 0}}^2
\end{equation}
with (possibly different) $C_{\lambda_0}\ge 1$ chosen uniformly on bounded sets of $\lambda_0$ in $\R_{\le 0}$.
In view of \eqref{eq:M_u} the estimate \eqref{eq:uniform_estimate_norms4*} can be written in the form
\[
-\frac{1}{2}\,\|u\|_{-1/2,\sqrt{\log}}^2+A\|u\|_{-1/2,\sqrt{\log}}+B\ge 0
\]
where 
\[
A:=K_0C_{\lambda_0}\big\|\big(\1 1|f_n\2\big)_{n\ge 0}\big\|_{\h^{1/2,\sqrt{\log}}_{\ge 0}}^2,\quad
B:=(2-\lambda_0) C_{\lambda_0}\big\|\big(\1 1|f_n\2\big)_{n\ge 0}\big\|_{\h^{1/2,\sqrt{\log}}_{\ge 0}}^2.
\]
This implies that $\|u\|_{-1/2,\sqrt{\log}}$ is bounded by the two roots of the quadratic polynomial 
$-\frac{1}{2}z^2+Az+B=0$. Hence, for any given $R>0$ we can choose the constant $C_{\lambda_0}\ge 1$
and a constant $C_R>0$ such that for any $u\in H^{-1/2,\sqrt{\log}}_{r,0}$ that satisfies
\begin{subequations}
\begin{equation}\label{eq:pre-Birkhoff_inverse_estimate2}
\max\Big(|\lambda_0(u)|, \big\|\big(\1 1|f_n(u)\2\big)_{n\ge 0}\big\|_{\h^{1/2,\sqrt{\log}}_{\ge 0}}\Big)\le R
\end{equation}
we have that $C_{\lambda_0}M_u\le C_R$, and hence, by \eqref{eq:D-operator} and \eqref{eq:uniform_estimate_norms3*},
\begin{equation}\label{eq:pre-Birkhoff_inverse_estimate1}
\big\|(D_{u,\lambda_0(u)-1})^{-1}\big\|_{\h^{1/2,\sqrt{\log}}_{\ge 0}\to H^{-1/2,\sqrt{\log}}_+}<C_R\,.
\end{equation}
\end{subequations}
Summarizing the above, we obtain the main result in this section.

\begin{Prop}\label{prop:pre-Birkhoff_estimates}
For any $u\in H^{-1/2,\sqrt{\log}}_{r,0}$ we have that
$\big(\1 1|f_n(u)\2\big)_{n\ge 0}\in\h^{1/2,\sqrt{\log}}_{\ge 0}$ and
\begin{equation}\label{eq:main_identity}
D_{u,\lambda_0(u)-1}\big(-\Pi u-\lambda_0(u)+1\big)=\big(\1 1|f_n(u)\big)_{n\ge 0}
\end{equation}
where $D_{u,\lambda} : H^{-1/2,\sqrt{\log}}_+\to\h^{1/2,\sqrt{\log}}_{\ge 0}$ is given by \eqref{eq:D-operator}.
Moreover, one has:
\begin{itemize}
\item[(i)] For any $v\in H^{-1/2,\sqrt{\log}}_{r,0}$ there exist constant $C\equiv C_v>0$  and an open neighborhood 
$U(v)$ of $v$ in $H^{-1/2,\sqrt{\log}}_{r,0}$ such that 
\begin{equation}\label{eq:pre-Birkhoff_direct_estimate}
\big\|D_{u,\lambda_0(u)-1}\big\|_{H^{-1/2,\sqrt{\log}}_+\to\h^{1/2,\sqrt{\log}}_{\ge 0}}<C
\end{equation}
for any $u\in U(v)$.\footnote{In fact, $U(v)$ can be taken to be an open ball in $H^{-1/2,\sqrt{\log}}_{r,0}$ of radius
$1/(8K_0)$ where $K_0>0$ is the constant in Corollary \ref{coro:main_inequality}.}
\item[(ii)] For any $R>0$ there exists a constant $C_R>0$ such that inequality \eqref{eq:pre-Birkhoff_inverse_estimate1}
holds for any $u\in H^{-1/2,\sqrt{\log}}_{r,0}$ that satisfies \eqref{eq:pre-Birkhoff_inverse_estimate2}.
\end{itemize}
\end{Prop}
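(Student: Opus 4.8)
The plan is to read all four assertions off the material assembled immediately above the statement, so the proof is essentially bookkeeping rather than a new argument. First I would establish the algebraic identity \eqref{eq:main_identity}. Since $L_u 1=D\,1-T_u 1=-\Pi u$ and the $f_n\equiv f_n(u)$ are eigenfunctions of the symmetric operator $L_u$ with $L_u f_n=\lambda_n f_n$, self-adjointness gives
\[
\langle\Pi u|f_n\rangle=-\langle L_u 1|f_n\rangle=-\langle 1|L_u f_n\rangle=-\lambda_n\langle 1|f_n\rangle,\quad n\ge 0 .
\]
Setting $f:=-\Pi u-\lambda_0+1$ (with $\lambda_0\equiv\lambda_0(u)$), which lies in $H^{-1/2,\sqrt{\log}}_+$ because $\Pi u$ does and the constant function is smooth, this yields $\langle f|f_n\rangle=(\lambda_n-\lambda_0+1)\langle 1|f_n\rangle$. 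By \eqref{eq:lambda-estimate*} the value $\lambda_0-1$ sits below the spectrum of $L_u$, hence lies in the resolvent set, so $D_{u,\lambda_0-1}=(\ell_u-(\lambda_0-1))^{-1}K_{u;-1/2,\sqrt{\log}}$ is the well-defined isomorphism \eqref{eq:D-operator}; applying it and using \eqref{eq:K_-1/2+(bis)} divides the $n$-th coordinate of $(\langle f|f_n\rangle)_{n\ge 0}$ by $\lambda_n-\lambda_0+1$ and recovers $(\langle 1|f_n\rangle)_{n\ge 0}$, which is \eqref{eq:main_identity}. In particular, since $f\in H^{-1/2,\sqrt{\log}}_+$ and $D_{u,\lambda_0-1}$ takes values in $\h^{1/2,\sqrt{\log}}_{\ge 0}$, the membership $(\langle 1|f_n(u)\rangle)_{n\ge 0}\in\h^{1/2,\sqrt{\log}}_{\ge 0}$ is immediate.

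For item (i) I would combine the two-sided equivalence \eqref{eq:uniform_estimate_norms3} with the eigenvalue localization \eqref{eq:lambda-estimate*}. Writing out the target norm,
\[
\big\|D_{u,\lambda_0-1}f\big\|_{\h^{1/2,\sqrt{\log}}_{\ge 0}}^2
=\sum_{n\ge 0}\frac{\n\,\log(\n+1)}{(\lambda_n-\lambda_0+1)^2}\,\big|\langle f|f_n\rangle\big|^2 ,
\]
and using $\lambda_n-\lambda_0+1\ge\n$ (immediate from $\lambda_n\ge n+\lambda_0$ together with $\n=\max(1,|n|)$), the right-hand side is bounded by
\[
\sum_{n\ge 0}\n^{-1}\log(\n+1)\,\big|\langle f|f_n\rangle\big|^2
=\big\|K_{u;-1/2,\sqrt{\log}}f\big\|_{\h^{-1/2,\sqrt{\log}}_{\ge 0}}^2
\le\varkappa_1^{-2}\,\|f\|_{-1/2,\sqrt{\log}}^2
\]
by \eqref{eq:uniform_estimate_norms3}. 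Since $\varkappa_1$ is uniform over $B^{r,0}_{-1/2,\sqrt{\log}}(u_0)$, the only point requiring care is to fit the neighborhood $U(v)$ inside such a ball: using that $L^2_{r,0}$ is dense in $H^{-1/2,\sqrt{\log}}_{r,0}$, I pick $u_0\in L^2_{r,0}$ with $\|v-u_0\|_{-1/2,\sqrt{\log}}<1/(8K_0)$, so that the ball $U(v)$ of radius $1/(8K_0)$ about $v$ is contained in $B^{r,0}_{-1/2,\sqrt{\log}}(u_0)$ of radius $1/(4K_0)$; on this ball $C:=\varkappa_1^{-1}$ serves uniformly, giving \eqref{eq:pre-Birkhoff_direct_estimate}.

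Finally, item (ii) is precisely the conclusion \eqref{eq:pre-Birkhoff_inverse_estimate1} already derived: starting from the lower bound \eqref{eq:uniform_estimate_norms3*}, substituting $f=\Pi u$ and using $\langle\Pi u|f_n\rangle=-\lambda_n\langle 1|f_n\rangle$ together with \eqref{eq:lambda-estimate*} produces \eqref{eq:uniform_estimate_norms4*}, which (via the definition \eqref{eq:M_u} of $M_u$) I recast as the quadratic inequality $-\tfrac12\|u\|_{-1/2,\sqrt{\log}}^2+A\|u\|_{-1/2,\sqrt{\log}}+B\ge 0$; bounding $\|u\|_{-1/2,\sqrt{\log}}$ by the larger root then forces $C_{\lambda_0}M_u\le C_R$ under the hypothesis \eqref{eq:pre-Birkhoff_inverse_estimate2}, whence \eqref{eq:pre-Birkhoff_inverse_estimate1}. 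I expect no genuine obstacle inside the proposition itself; the real work lives in the preceding estimates \eqref{eq:uniform_estimate_norms3} and \eqref{eq:uniform_estimate_norms3*}, and the only delicate bookkeeping point is the uniformity in (i), namely securing a single $\varkappa_1$ (equivalently $C$) for a whole neighborhood, which the density of $L^2_{r,0}$ and the $u_0$-independence of the ball radius resolve.
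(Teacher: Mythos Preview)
Your proposal is correct and follows essentially the same approach as the paper: the identity \eqref{eq:main_identity} comes from $L_u 1=-\Pi u$ together with the eigenfunction expansion (the paper phrases this via the commutative diagram \eqref{eq:diagram1} and the observation $1\in\widetilde{\dom}(L_u)$, but your coordinate computation is the same content), item (i) is deduced from \eqref{eq:uniform_estimate_norms3} and the eigenvalue bound \eqref{eq:lambda-estimate*} exactly as in the paper, and item (ii) is, as you note, already established in the text preceding the proposition. Your explicit handling of the neighborhood $U(v)$ via density of $L^2_{r,0}$ is precisely what the footnote hints at.
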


The following remark is needed for the proof of the properness of the Birkhoff map and its inverse.

\begin{Rem}\label{rem:pre-Birkhoff_estimates}
Since for any $u\in H^{-1/2,\sqrt{\log}}_{r,0}$ the Fourier transform \eqref{eq:K_0} is a isomorphism and 
since $K_{u;-1/2,\sqrt{\log}}|_{L^2_+}\equiv K_{u;0}$ it follows from \eqref{eq:D-operator} and 
\eqref{eq:lambda-estimate*} that for any set $U$ in $H^{-1/2,\sqrt{\log}}_{r,0}$ such that 
$\lambda_0(u)$ is bounded uniformly in $U$ there exists $C>0$ such that
\begin{equation*}
\big\|D_{u,\lambda_0(u)-1}\big\|_{L^2_+\to\h^1_{\ge 0}},
\big\|(D_{u,\lambda_0(u)-1})^{-1}\big\|_{\h^1_{\ge 0}\to L^2_+}<C
\end{equation*}
for any $u\in U$.
\end{Rem}

\begin{Rem}
The identity \eqref{eq:main_identity} and the estimates in Proposition \ref{prop:pre-Birkhoff_estimates} and
Remark \ref{rem:pre-Birkhoff_estimates} can be interpreted as a ``quasi-linearity'' of the pre-Birkhoff map 
$u\mapsto\big(\1 1|f_n(u)\big)_{n\ge 0}$, $H^{-1/2,\sqrt{\log}}_{r,0}\to\h^{1/2,\sqrt{\log}}_{\ge 0}$.
\end{Rem}

\begin{proof}[Proof of Proposition \ref{prop:pre-Birkhoff_estimates}]
For a given $u\in H^{-1/2,\sqrt{\log}}_{r,0}$ we set $\lambda\equiv\lambda_0(u)-1$ in the diagram \eqref{eq:diagram1}
and note that $\lambda_0(u)-1$ is in the resolvent set of $L_u$.
As above, we then choose $u_0\in L^2_{r,0}$ such that $u\in B^{r,0}_{-1/2,\sqrt{\log}}(u_0)$. 
Let us first prove that $\big(\1 1|f_n(u)\2\big)_{n\ge 0}\in\h^{1/2,\sqrt{\log}}_{\ge 0}$.
To this end, note that $L_u\,1=-\Pi u\in H^{-1/2,\sqrt{\log}}_+$, and hence
\[
1\in\widetilde{\dom}(L_u)\,.
\]
We then obtain from the commutative diagram \eqref{eq:diagram1} that
\begin{equation*}
\big(\1 1|f_n(u)\2\big)_{n\ge 0}=K_{u;1/2}\,1=
D_{u,\lambda_0(u)-1}\big(-\Pi u-\lambda_0(u)+1\big)\in\h^{1/2,\sqrt{\log}}_{\ge 0}.
\end{equation*}
This proves \eqref{eq:main_identity} and the fact that $\big(\1 1|f_n(u)\2\big)_{n\ge 0}\in\h^{1/2,\sqrt{\log}}_{\ge 0}$.
Let us now prove \eqref{eq:pre-Birkhoff_direct_estimate}.
Since $D_{u,\lambda_0(u)-1}=(\ell_u-\lambda_0(u)+1)^{-1}K_{u;-1/2,\sqrt{\log}}$ we conclude 
from \eqref{eq:uniform_estimate_norms3}, \eqref{eq:lambda-estimate*}, and $\lambda_0(u)\le 0$, that
there exists $C>0$ such that for any $f\in H^{-1/2,\sqrt{\log}}_+$,
\[
\big\|D_{u,\lambda_0(u)-1}f\|_{\h^{1/2,\sqrt{\log}}_{\ge 0}}
\le C\,\|f\|_{H^{-1/2,\sqrt{\log}}_+}
\]
uniformly in $u\in B^{r,0}_{-1/2,\sqrt{\log}}(u_0)$.
This proves (i). Item (ii) is already proved.
\end{proof}

\section{The Birkhoff map}\label{sec:Birkhoff_map}
In this Section we extend the Birkhoff map \eqref{eq:Phi-introduction} for $u\in H^{-1/2,\sqrt{\log}}_{r,0}$
and prove Theorem \ref{th:main} stated in the Introduction. For simplicity of notation we will identify the (real) space
$\h^{-1/2,\sqrt{\log}}_{r,0}$ with the space $\h^{0,\sqrt{\log}}_+$.

For a given $u\in H^{-1/2,\sqrt{\log}}_{r,0}$ consider the norming constants (cf. \cite[Corollary 3.4]{GK})
\begin{equation}\label{eq:kappa_0}
\kappa_0(u):=\prod_{p\ge 1}\Big(1-\frac{\gamma_p(u)}{\lambda_p(u)-\lambda_0(u)}\Big)
\end{equation}
and 
\begin{equation}\label{eq:kappa_n}
\kappa_n(u):=
\frac{1}{\lambda_n(u)-\lambda_0(u)}\prod_{1\le p\ne n}\Big(1-\frac{\gamma_p(u)}{\lambda_p(u)-\lambda_n(u)}\Big),
\quad n\ge 1.
\end{equation}
The infinite products converge absolutely in view of the absolute convergence in \eqref{eq:trace_formula} and the fact that
$|\lambda_p(u)-\lambda_n(u)|\ge 1$ for $p\ne n$ and $n,p\ge 1$ (cf. \eqref{eq:lambda-inequality}). Note that for any 
$u\in H^{-1/2,\sqrt{\log}}_{r,0}$,
\begin{equation}\label{eq:kappa_n>0}
\kappa_n(u)>0,\quad n\ge 0\,.
\end{equation}
This follows since the infinite products converge and since by \eqref{eq:lambda-inequality},
\[
1-\frac{\gamma_p(u)}{\lambda_p(u)-\lambda_n(u)}=\frac{\lambda_{p-1}(u)-\lambda_n(u)+1}{\lambda_p(u)-\lambda_n(u)}>0
\]
for any $n,p\ge 0$, $p\ne n$.
The following lemma is proved in Appendix \ref{sec:appendix}.

\begin{Lem}\label{lem:kappa-estimate}
For any $n\ge 0$ the norming constant $\kappa_n(u)$ is well-defined and depends continuously on
the potential $u\in H^{-1/2,\sqrt{\log}}_{r,0}$.
For any $v\in H^{-1/2,\sqrt{\log}}_{r,0}$ there exist an open neighborhood $U(v)$ of $v$ in 
$H^{-1/2,\sqrt{\log}}_{r,0}$ and constants $0<c<C$, such that 
\begin{equation}\label{eq:kappa-estimate}
c\le n \kappa_n(u)\le C
\end{equation}
for any $u\in U(v)$ and $n\ge 1$.
\end{Lem}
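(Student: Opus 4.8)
The plan is to establish the two-sided bound \eqref{eq:kappa-estimate} first by purely algebraic manipulation of the product formulas \eqref{eq:kappa_0}--\eqref{eq:kappa_n}, and then to deduce the continuity of $\kappa_n$ from this bound together with the pre-Birkhoff estimate of Proposition \ref{prop:pre-Birkhoff_estimates}. Fix $v\in H^{-1/2,\sqrt{\log}}_{r,0}$. Since $\lambda_0$ depends continuously on $u$ (Theorem \ref{th:L_u-real} (iii)), I would choose a neighborhood $U(v)$ on which $|\lambda_0(u)|\le R$ for some $R>0$; by the trace formula \eqref{eq:trace_formula} this gives $\sum_{p\ge1}\gamma_p(u)=-\lambda_0(u)\le R$ uniformly on $U(v)$. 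The bound \eqref{eq:kappa-estimate} in particular reconfirms the well-definedness and positivity \eqref{eq:kappa_n>0} of $\kappa_n$.

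For the bound, fix $n\ge1$ and write $n\kappa_n(u)=\frac{n}{\lambda_n-\lambda_0}\prod_{1\le p\ne n}\big(1-\tfrac{\gamma_p}{\lambda_p-\lambda_n}\big)$. By \eqref{eq:lambda-estimate*} one has $n\le\lambda_n-\lambda_0\le n+R$, so the prefactor lies in $[\tfrac1{1+R},1]$. For $p<n$ the factor equals $1+\tfrac{\gamma_p}{\lambda_n-\lambda_p}\ge1$, and since $\lambda_n-\lambda_p\ge n-p\ge1$ by \eqref{eq:lambda-inequality}, the product over $p<n$ is bounded above by $\exp\big(\sum_{p<n}\gamma_p\big)\le e^R$. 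For $p>n$ each factor lies in $(0,1]$, so the product over $p>n$ is $\le1$; for the matching lower bound I would use $\log(1-x)\ge-\tfrac{x}{1-x}$ together with $\tfrac{x}{1-x}=\tfrac{\gamma_p}{\lambda_{p-1}-\lambda_n+1}\le\gamma_p$ (valid for $p>n$ since $\lambda_{p-1}-\lambda_n+1\ge1$) to get $\prod_{p>n}\ge e^{-R}$. Collecting these estimates yields $\tfrac{e^{-R}}{1+R}\le n\kappa_n(u)\le e^R$ uniformly on $U(v)$, which is \eqref{eq:kappa-estimate}.

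For the continuity I would exploit the identity $|\1 1|f_n(u)\2|^2=\gamma_n(u)\kappa_n(u)$ for $n\ge1$, obtained by comparing the eigenfunction expansion $\mathcal H_\lambda(u)=\sum_{n\ge0}\tfrac{|\1 1|f_n\2|^2}{\lambda_n+\lambda}$ with the product formula for $\mathcal H_\lambda$ and matching residues at $\lambda=-\lambda_n$ (cf. \cite{GK}). Combined with the lower bound $\kappa_n\ge c/n$ just established, this gives $\gamma_n\le\tfrac nc\,|\1 1|f_n\2|^2$. By \eqref{eq:main_identity} and Proposition \ref{prop:pre-Birkhoff_estimates} the norm $\big\|(\1 1|f_n(u)\2)_{n\ge0}\big\|_{\h^{1/2,\sqrt{\log}}_{\ge 0}}$ is bounded by a constant $M$ uniformly on $U(v)$ (after possibly shrinking $U(v)$), whence for every $N\ge1$,
\[
\sum_{p>N}\gamma_p(u)\le\frac1c\sum_{p>N}p\,|\1 1|f_p\2|^2\le\frac{1}{c\log(N+1)}\sum_{p>N}p\log(p+1)\,|\1 1|f_p\2|^2\le\frac{M^2}{c\log(N+1)}\,,
\]
which tends to $0$ as $N\to\infty$ uniformly on $U(v)$. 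This uniform tail control forces the products in \eqref{eq:kappa_0}--\eqref{eq:kappa_n} to converge uniformly on $U(v)$: splitting each product at the index $N$, the finite part $\prod_{p\le N,\,p\ne n}$ is continuous in $u$ (each $\lambda_p$ is continuous and $\lambda_p-\lambda_n$ is bounded away from $0$ for $p\ne n$), while the logarithm of the tail $\prod_{p>N}$ is bounded in absolute value by $\sum_{p>N}\gamma_p$ for $N\ge n$. Hence $\kappa_n$ is a locally uniform limit of continuous functions, and therefore continuous.

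The main obstacle is precisely this uniform smallness of the tails $\sum_{p>N}\gamma_p(u)$ over $U(v)$. Pointwise in $u$ this is just the convergence of the series in the trace formula \eqref{eq:trace_formula}, but at the critical regularity $H^{-1/2,\sqrt{\log}}_{r,0}$ there is no compactness available to pass from pointwise to uniform convergence via Dini's theorem; the uniform rate is genuinely supplied by the gain of a logarithmic factor in the pre-Birkhoff estimate of Proposition \ref{prop:pre-Birkhoff_estimates}, which is the structural feature that singles out $H^{-1/2,\sqrt{\log}}_{r,0}$.
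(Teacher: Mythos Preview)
Your proof is correct, and your derivation of the two--sided bound is in fact cleaner than the paper's: you obtain explicit constants $c=e^{-R}/(1+R)$ and $C=e^R$ from the single hypothesis $\sum_p\gamma_p(u)\le R$, without first establishing continuity. The paper instead proves continuity first, and then bounds $\sum_{p\ne n}\gamma_p/|\lambda_p-\lambda_n|$ by splitting into $|p-n|>n/2$ and $|p-n|\le n/2$, using continuity of the tail to control the second piece on a neighborhood.

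Where you diverge more substantially is in the continuity argument, and here you make things harder than necessary. Your claim that ``at the critical regularity there is no compactness available to pass from pointwise to uniform convergence via Dini's theorem'' is a misconception: Dini's theorem requires only that the \emph{domain} be compact, and since continuity is local one may test it on convergent sequences, whose ranges together with their limits are compact in $H^{-1/2,\sqrt{\log}}_{r,0}$. This is exactly what the paper does. Even more simply, the tail $\sum_{p>N}\gamma_p(u)=-\lambda_0(u)-\sum_{p\le N}\gamma_p(u)$ is manifestly continuous in $u$ as a finite combination of continuous functions, so making it small at $v$ and then on a neighborhood is immediate --- no quantitative rate is needed. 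Your detour through Proposition~\ref{prop:pre-Birkhoff_estimates} and the residue identity $|\langle 1|f_n\rangle|^2=\gamma_n\kappa_n$ does work (and the direct residue derivation avoids the circularity that the paper's continuity extension of \eqref{eq:gamma-kappa} would introduce), and it yields the bonus of a quantitative decay $\sum_{p>N}\gamma_p=O(1/\log N)$ uniformly on $U(v)$; but this is heavier machinery than the continuity statement requires.
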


\begin{Rem}\label{rem:kappa-estimate}
The proof of Lemma \ref{rem:kappa-estimate} shows that the following version of the lemma holds:
Let $U$ be a set in $H^{-1/2,\sqrt{\log}}_{r,0}$ such that the image of the map $U\to\ell^1_+$, 
$u\mapsto\big(\gamma_n(u)\big)_{n\ge 1}$, is a pre-compact set in $\ell^1_+$. Then, there exist constants $0<c<C$ such 
that the inequality \eqref{eq:kappa-estimate} holds for any $u\in U$ and $n\ge 1$. Moreover, $\kappa_0(u)$ is
bounded uniformly for $u\in U$.
\end{Rem}

We can now extend the {\em Birkhoff map} \eqref{eq:Phi-introduction} for $u\in H^{-1/2,\sqrt{\log}}_{r,0}$
by setting (cf. \cite[formula (4.1)]{GK})
\begin{equation}\label{eq:Phi-extended(formulas)}
\Phi(u):=\big(\Phi_n(u)\big)_{n\ge 1},\quad\Phi_n(u):=\frac{\1 1|f_n(u)\2}{\sqrt{\kappa_n(u)}},\quad n\ge 1.
\end{equation}
Recall from \cite[Corollary 3.4]{GK} that for $u\in L^2_{r,0}$ we have that
\begin{equation}\label{eq:gamma-kappa}
\big|\1 1| f_0(u)\2\big|^2=\kappa_0(u)\quad \text{\rm and}\quad
\big|\1 1| f_n(u)\2\big|^2=\gamma_n(u)\kappa_n(u),\quad n\ge 1.
\end{equation}
Since the quantities $\gamma_n(u)$, $\kappa_n(u)$, and $\1 1| f_n(u)\2$, are well-defined and depend continuously
on $u\in H^{-1/2,\sqrt{\log}}_{r,0}$ (cf.  Theorem \ref{th:L_u-real} (iii), Lemma \ref{lem:kappa-estimate}, 
and \eqref{eq:f_n-analyticity}), the relations \eqref{eq:gamma-kappa} continues to hold for $u\in H^{-1/2,\sqrt{\log}}_{r,0}$.
In particular, we obtain from \eqref{eq:Phi-extended(formulas)} and \eqref{eq:gamma-kappa} that
\begin{equation}\label{eq:actions}
\gamma_n(u)=\big|\Phi_n(u)\big|^2,\quad n\ge 1,
\end{equation}
for $u\in H^{-1/2,\sqrt{\log}}_{r,0}$.

\medskip

We are now ready to prove Theorem \ref{th:main} stated in the Introduction. 
We have

\begin{Th}\label{th:Birkhoff_map}
The formula \eqref{eq:Phi-extended(formulas)} defines a map
\begin{equation}\label{eq:Phi-extended} 
\Phi : \quad H^{-1/2,\sqrt{\log}}_{r,0}\to\h^{0,\sqrt{\log}}_+
\end{equation}
that is a homeomorphism. 
\end{Th}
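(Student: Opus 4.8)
The plan is to prove that $\Phi$ in \eqref{eq:Phi-extended} is a homeomorphism by establishing four properties: $\Phi$ is well-defined (its image lies in $\h^{0,\sqrt{\log}}_+$), $\Phi$ is continuous, $\Phi$ is a bijection, and $\Phi$ is proper (so that, being a continuous proper bijection between metric spaces, it is automatically a homeomorphism). The strategy mirrors the scheme of \cite{GKT1} for the scale $H^s_{r,0}$, $s>-1/2$, but the borderline regularity forces us to replace all analytic-map arguments by the quasi-linear estimates from Proposition \ref{prop:pre-Birkhoff_estimates} and the norming-constant control from Lemma \ref{lem:kappa-estimate}.

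First I would verify that $\Phi$ is well-defined and continuous. From \eqref{eq:Phi-extended(formulas)} and \eqref{eq:gamma-kappa} we have $\Phi_n(u)=\1 1|f_n(u)\2/\sqrt{\kappa_n(u)}$, and the key observation is the norm identity
\begin{equation*}
\sum_{n\ge 1}\n\log(\n+1)|\Phi_n(u)|^2=\sum_{n\ge 1}\frac{\n\log(\n+1)}{\kappa_n(u)}\,|\1 1|f_n(u)\2|^2.
\end{equation*}
By Lemma \ref{lem:kappa-estimate} the factor $1/\kappa_n(u)$ is comparable to $n$ locally uniformly in $u$, so this sum is comparable to $\sum_{n\ge 1}\n^{2\cdot(1/2)}\log(\n+1)|\1 1|f_n(u)\2|^2$, which is exactly the $\h^{1/2,\sqrt{\log}}_{\ge 0}$-norm of $\big(\1 1|f_n(u)\2\big)_{n\ge 0}$ shown to be finite in Proposition \ref{prop:pre-Birkhoff_estimates}. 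Thus $\Phi(u)\in\h^{0,\sqrt{\log}}_+$. Continuity then follows because $u\mapsto\lambda_n(u)$ and $u\mapsto f_n(u)$ are real-analytic (Theorem \ref{th:L_u-real} (iii) and \eqref{eq:f_n-analyticity}), $u\mapsto\kappa_n(u)$ is continuous (Lemma \ref{lem:kappa-estimate}), and the main identity \eqref{eq:main_identity} together with the uniform bound \eqref{eq:pre-Birkhoff_direct_estimate} controls the tails of the series locally uniformly, upgrading coordinatewise convergence to convergence in $\h^{0,\sqrt{\log}}_+$.

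The injectivity and surjectivity I would deduce by transferring the corresponding statements on $L^2_{r,0}$ (where $\Phi$ is a known bianalytic diffeomorphism onto $\h^{1/2}_{r,0}$, by \cite{GK,GKT3,GKT4}) to the closure $H^{-1/2,\sqrt{\log}}_{r,0}$. For injectivity, if $\Phi(u)=\Phi(v)$ then $\gamma_n(u)=\gamma_n(v)$ by \eqref{eq:actions} and the full spectral data $\{\lambda_n,\1 1|f_n\2\}$ coincide, so $u$ and $v$ have the same Lax operator data; one recovers $\Pi u$ from the spectral data via the inverse of $D_{u,\lambda_0(u)-1}$ using \eqref{eq:main_identity}, forcing $u=v$. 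For surjectivity, given a target in $\h^{0,\sqrt{\log}}_+$ I would approximate it by finitely supported sequences, realize each by a smooth potential in $L^2_{r,0}$ via the known surjectivity there, and then pass to the limit using the uniform inverse estimate \eqref{eq:pre-Birkhoff_inverse_estimate1} of Proposition \ref{prop:pre-Birkhoff_estimates} (ii) to obtain a convergent sequence of potentials in $H^{-1/2,\sqrt{\log}}_{r,0}$ whose limit maps to the target.

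The main obstacle, and the heart of the proof, is establishing properness, which in this bottom-regularity setting replaces the usual inverse-function-theorem argument. Suppose $\Phi(u^{(k)})\to z$ in $\h^{0,\sqrt{\log}}_+$; I must show $(u^{(k)})$ has a convergent subsequence in $H^{-1/2,\sqrt{\log}}_{r,0}$. The convergence of the images controls $\big(\gamma_n(u^{(k)})\big)_{n\ge 1}$ in $\ell^1_+$ via \eqref{eq:actions} and \eqref{eq:trace_formula}, which pins down $\lambda_0(u^{(k)})=-\sum_n\gamma_n$ and hence the whole spectrum by \eqref{eq:lambda-estimate*}; this is precisely the hypothesis needed to invoke Remark \ref{rem:kappa-estimate} and Remark \ref{rem:pre-Birkhoff_estimates}, giving uniform two-sided bounds on $n\kappa_n$ and on $(D_{u,\lambda_0(u)-1})^{\pm1}$. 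The delicate point is to convert $\h^{0,\sqrt{\log}}_+$-convergence of $\Phi(u^{(k)})$ into an actual $H^{-1/2,\sqrt{\log}}_{r,0}$-Cauchy estimate for $u^{(k)}$: I would use the inverse relation
\begin{equation*}
\Pi u^{(k)}=\big(D_{u^{(k)},\lambda_0-1}\big)^{-1}\big(\1 1|f_n(u^{(k)})\2\big)_{n\ge 0}+\lambda_0(u^{(k)})-1
\end{equation*}
coming from \eqref{eq:main_identity}, bound the difference $\|u^{(k)}-u^{(l)}\|_{-1/2,\sqrt{\log}}$ through \eqref{eq:pre-Birkhoff_inverse_estimate1}, and handle the variation of the operator $\big(D_{u,\lambda_0-1}\big)^{-1}$ with $u$. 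The subtlety is that $D$ itself depends on the unknown $u^{(k)}$ through the eigenfunction basis $f_n(u^{(k)})$, so I expect to need a compactness argument: extract a subsequence along which the $f_n$ and $\lambda_n$ converge, argue the limiting data defines a genuine potential $u_\infty\in H^{-1/2,\sqrt{\log}}_{r,0}$, and verify $\Phi(u_\infty)=z$ and $u^{(k)}\to u_\infty$. Closing this circular dependence of the estimates on $u$ is where the $\sqrt{\log}$-weight is exactly critical, and it is the step I anticipate requiring the most care.
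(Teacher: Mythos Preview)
Your overall architecture matches the paper's: well-definedness via Proposition~\ref{prop:pre-Birkhoff_estimates} and Lemma~\ref{lem:kappa-estimate}, injectivity from the explicit formulas of \cite{GK}, continuity from componentwise continuity plus a compactness upgrade, and surjectivity from density plus properness. (There is a slip in your norm identity: the $\h^{0,\sqrt{\log}}_+$ weight is $\log(\n+1)$, not $\n\log(\n+1)$; with the correct weight your comparison to $\|(\1 1|f_n\2)\|_{\h^{1/2,\sqrt{\log}}}$ goes through as written.)

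The genuine gap is in the properness step. Your plan is either (i) to bound $\|u^{(k)}-u^{(l)}\|_{-1/2,\sqrt{\log}}$ by controlling the variation of $(D_{u,\lambda_0-1})^{-1}$ in $u$, or (ii) to extract limiting eigendata $(\lambda_n^\infty,f_n^\infty)$ and argue they come from some $u_\infty$. Option (i) essentially requires a modulus of continuity for $u\mapsto f_n(u)$ uniform in $n$, i.e.\ a derivative-type estimate on the Birkhoff map in the log-space---but Proposition~\ref{prop:no_analyticity} shows the map is not even $C^2$ there, so no such uniform estimate is available. Option (ii) requires inverse spectral theory in $H^{-1/2,\sqrt{\log}}_{r,0}$ (that an admissible system $(\lambda_n^\infty,f_n^\infty)$ arises from a potential), which is not established and is circular with what you are proving.

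The paper avoids this circularity by a different device. It proves directly that $\Phi$ and $\Phi^{-1}$ send pre-compact sets to pre-compact sets, using the characterization of pre-compactness in Lemma~\ref{lem:pre-compact_sets} (uniformly small tails plus bounded low modes) together with \emph{two} operator bounds on the linear map $Q_u:=(D_{u,\lambda_0-1})^{-1}\cdot\mathrm{diag}(\sqrt{\kappa_n})$: the log-space bound $\h^{0,\sqrt{\log}}_{\ge 0}\to H^{-1/2,\sqrt{\log}}_+$ from \eqref{eq:pre-Birkhoff_inverse_estimate1}, and the unweighted bound $\h^{1/2}_{\ge 0}\to L^2_+$ from Remark~\ref{rem:pre-Birkhoff_estimates}. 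Given $K$ pre-compact in the target, one splits $(\1 1|f_0\2,\Phi(u))=\pi_{<N}(\cdot)+\pi_{\ge N}(\cdot)$; the high-mode piece has small $\h^{0,\sqrt{\log}}$ norm uniformly on $K$, so its $Q_u$-image is uniformly small in $H^{-1/2,\sqrt{\log}}_+$, while the low-mode piece is bounded in $\h^{1/2}_{\ge 0}$, so its $Q_u$-image is bounded in $L^2_+$ and hence pre-compact in $H^{-1/2,\sqrt{\log}}_+$. This splitting uses only the \emph{size} of $Q_u$, never its variation in $u$, and that is precisely what breaks the circular dependence you flagged. Once pre-compactness is in hand, continuity of $\Phi^{-1}$ follows from componentwise continuity (via \cite[Lemma~4.2]{GK}) exactly as you do for $\Phi$.
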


\begin{proof}[Proof of Theorem \ref{th:Birkhoff_map}]
We prove the theorem in several steps:

\medskip

(a) {\em The Birkhoff map \eqref{eq:Phi-extended} is well-defined and injective.}
The fact that \eqref{eq:Phi-extended} is well-defined follows from the first statement of 
Proposition \ref{prop:pre-Birkhoff_estimates} and Lemma \ref{lem:kappa-estimate}.
The injectivity of \eqref{eq:Phi-extended} follows from the explicit formulas in \cite[Lemma 4.2]{GK} and
the arguments in \cite[Proposition 4.3]{GK}. 
Note that all quantities appearing in \cite[Proposition 4.3]{GK} are well-defined for $u\in H^{-1/2,\sqrt{\log}}_{r,0}$.
Denote by $\image(\Phi)\subseteq\h^{0,\sqrt{\log}}_+$ the image of 
\eqref{eq:Phi-extended} and consider the inverse map,
\begin{equation}\label{eq:Phi-inverse}
\Phi^{-1} : \image\big(\Phi\big)\to H^{-1/2,\sqrt{\log}}_{r,0}.
\end{equation}

\medskip

(b) {\em The image of any pre-compact set with respect to the Birkhoff map \eqref{eq:Phi-extended}, and 
its inverse \eqref{eq:Phi-inverse}, is pre-compact.}
Let us first consider the case of the map \eqref{eq:Phi-extended}.
We will follow the arguments in the proof of \cite[Proposition 2 (iii)]{GKT1}. Let $K$ be a pre-compact set in 
$H^{-1/2,\sqrt{\log}}_{r,0}$. Without loss of generality we will assume that $K\subseteq U(v)$ where 
$v\in H^{-1/2,\sqrt{\log}}_{r,0}$ and $U(v)$ is an open neighborhood of $v$ in $H^{-1/2,\sqrt{\log}}_{r,0}$ such that
the statement of Proposition \ref{prop:pre-Birkhoff_estimates} (i), Remark \ref{rem:pre-Birkhoff_estimates}, and
Lemma \ref{lem:kappa-estimate} hold. 
Then, in view of \eqref{eq:main_identity} and \eqref{eq:Phi-extended(formulas)}, there exist a constant $C\equiv C_v>0$
and a linear map $J_u : H^{-1/2,\sqrt{\log}}_+\to\h^{0,\sqrt{\log}}_+$ such that $J_u|_{L^2_+} : L^2_+\to\h^{1/2}_+$
and for any $u\in U(v)$,
\begin{equation}\label{eq:main_identity_direct}
\Phi(u)=J_u\big(\Pi u-\lambda_0(u)+1\big)
\end{equation}
and 
\begin{equation}\label{eq:main_identity_direct_inequalities}
\|J_u\|_{H^{-1/2,\sqrt{\log}}_+\to\h^{0,\sqrt{\log}}_+}\le C,\quad
\|J_u\|_{L^2_+\to\h^{1/2}_+}\le C.
\end{equation}
For any integer $N\ge 0$ consider the projections
\[
\Pi_{\ge N} : H^{-1/2,\sqrt{\log}}_+\to H^{-1/2,\sqrt{\log}}_+,\quad
f\mapsto\sum_{n\ge N}\widehat{f}(n)\,e^{i n x},
\]
and
\[
\Pi_{<N} : H^{-1/2,\sqrt{\log}}_+\to H^{-1/2,\sqrt{\log}}_+,\quad
f\mapsto\sum_{0\le n<N}\widehat{f}(n)\,e^{i n x},
\]
as well as the projections $\pi_{\ge N} : \h^{0,\sqrt{\log}}_+\to \h^{0,\sqrt{\log}}_+$
and $\pi_{<N} : \h^{0,\sqrt{\log}}_+\to \h^{0,\sqrt{\log}}_+$ defined in a similar way.
Now, take $\varepsilon>0$. By Lemma \ref{lem:pre-compact_sets} below, there exists 
an integer $N_\varepsilon\ge 1$ and $R_\varepsilon>0$ such that 
\begin{equation}\label{eq:K-pre-compact}
\big\|\Pi_{\ge N_\varepsilon} u\big\|_{-1/2,\sqrt{\log}}\le\varepsilon/(2C),\quad
\big\|\Pi_{<N_\varepsilon}\big(u-\lambda_0(u)+1\big)\big\|_{L^2_+}\le R_\varepsilon,
\end{equation}
for any $u\in K$. (Note that $\lambda_0(u)$ is uniformly bounded on $K$ since 
$\lambda_0$ depends continuously on $u\in H^{-1/2,\sqrt{\log}}_{r,0}$.)
Then, by \eqref{eq:main_identity_direct}, for any $u\in K$,
\begin{equation*}
\Phi(u)=J_u\Big(\Pi_{<N_\varepsilon}\big(u-\lambda_0(u)+1\big)\Big)+
J_u\big(\Pi_{\ge N_\varepsilon}u\big).
\end{equation*}
This implies that
\begin{equation}\label{eq:Phi-image}
\Phi(K)=\mathcal{I}_1+\mathcal{I}_2
\end{equation}
where
\[
\mathcal{I}_1:=\Big\{J_u\Big(\Pi_{<N_\varepsilon}\big(u-\lambda_0(u)+1\big)\Big)\,\Big|\,u\in K\Big\},\quad
\mathcal{I}_2:=\big\{J_u\big(\Pi_{\ge N_\varepsilon}u\big)\,\big|\,u\in K\big\},
\]
and  $\Phi(K)$ denotes the set $\{\Phi(u)\,|\,u\in K\}$.
It follows from the second inequality in \eqref{eq:main_identity_direct_inequalities} and 
the second inequality in \eqref{eq:K-pre-compact} that the set $\mathcal{I}_1$ is bounded in $\h^{1/2}_+$, and hence, 
it is a {\em compact} set in $\h^{0,\sqrt{\log}}_+$.
Moreover, by the first inequality in \eqref{eq:main_identity_direct_inequalities} and 
the first inequality in \eqref{eq:K-pre-compact}, the set $\mathcal{I}_2$ is contained inside a centered at zero open ball
of radius $\varepsilon/2$ in $\h^{0,\sqrt{\log}}_+$.
By applying Lemma \ref{lem:pre-compact_sets} to the compact set $\mathcal{I}_1$ in $\h^{0,\sqrt{\log}}_+$
and by taking $N_\varepsilon\ge 1$ and $R_\varepsilon>0$ larger if necessary, we obtain from \eqref{eq:Phi-image} that
\begin{equation*}
\big\|\pi_{\ge N_\varepsilon}\Phi(K)\big\|_{\h^{0,\sqrt{\log}}_+}\le
\big\|\pi_{\ge N_\varepsilon}\mathcal{I}_1\|_{\h^{0,\sqrt{\log}}_+}+
\big\|\pi_{\ge N_\varepsilon}\mathcal{I}_2\|_{\h^{0,\sqrt{\log}}_+}
\le\varepsilon
\end{equation*}
and
\begin{equation*}
\big\|\pi_{<N_\varepsilon}\Phi(K)\big\|_{\ell^2_+}\le R_\varepsilon.
\end{equation*}
This and  Lemma \ref{lem:pre-compact_sets} then imply that $\Phi(K)$ is a pre-compact set in $\h^{0,\sqrt{\log}}_+$.
 
Let us now prove that the image of any pre-compact set with respect to \eqref{eq:Phi-inverse} is pre-compact.
Take a pre-compact set $K$ in $\h^{0,\sqrt{\log}}_+$.
For any $u\in\Phi^{-1}(K)$ we have that $\Phi(u)\in K$, and hence, by \eqref{eq:actions}, 
$\big\{\big(\gamma_n(u)\big)_{n\ge 1}\,\big|\,u\in\Phi^{-1}(K)\big\}$ is a pre-compact set in $\ell^1_+$. 
By Remark \ref{rem:kappa-estimate} there exist constants $0<c<C$ such that the inequality \eqref{eq:kappa-estimate} holds 
for any $u\in\Phi^{-1}(K)$ and $n\ge 1$. 
It then follows from \eqref{eq:Phi-extended(formulas)} and the trace formula \eqref{eq:trace_formula} that there exists
a constant $R>0$ such that the condition \eqref{eq:pre-Birkhoff_inverse_estimate2} holds for any $u\in\Phi^{-1}(K)$. 
By Proposition \ref{prop:pre-Birkhoff_estimates}, we then conclude that there exists a constant $C_R>0$ such that
\[
-\Pi u-\lambda_0(u)+1=(D_{u,\lambda_0(u)-1})^{-1}\big(\1 1| f_n(u)\2\big)_{n\ge 0}
\]
where
\[
\big\|(D_{u,\lambda_0(u)-1})^{-1}\big\|_{\h^{1/2,\sqrt{\log}}_{\ge 0}\to H^{1/2,\sqrt{\log}}_+}\le C_R
\]
for any $u\in\Phi^{-1}(K)$.
This and the second inequality in Remark \ref{rem:pre-Birkhoff_estimates} imply that there exist a constant $C_K>0$ and 
a linear map $Q_u : \h^{0,\sqrt{\log}}_{\ge 0}\to H^{-1/2,\sqrt{\log}}_+$ such that 
$Q_u|_{\h^{1/2}_{\ge 0}} : \h^{1/2}_{\ge 0}\to L^2_+$ and for any $u\in\Phi^{-1}(K)$,
\begin{equation}\label{eq:main_identity_direct_inverse}
\Pi u-\lambda_0(u)+1=Q_u\big(\1 1|f_0(u)\2,\Phi(u)\big)
\end{equation}
and 
\begin{equation}\label{eq:main_identity_direct_inequalities_inverse}
\|Q_u\|_{\h^{0,\sqrt{\log}}_{\ge 0}\to H^{-1/2,\sqrt{\log}}_{+}}\le C_K,\quad
\|Q_u\|_{\h^{1/2}_{\ge 0}\to L^2_+}\le C_K.
\end{equation}
Note in addition that by Remark \ref{rem:kappa-estimate} the quantity $|\1 1| f_0(u)\2|^2=\kappa_0(u)$ is
bounded uniformly for $u\in\Phi^{-1}(K)$.
The pre-compactness of $\Phi^{-1}(K)$ then follows from \eqref{eq:main_identity_direct_inverse},
\eqref{eq:main_identity_direct_inequalities_inverse}, and Lemma \ref{lem:pre-compact_sets}, in exactly the same way 
as in the proof of the first part of (b).

\medskip

(c) {\em The Birkhoff map \eqref{eq:Phi-extended} is continuous and onto.}
Since for any given $n\ge 1$ the map \eqref{eq:f_n-analyticity} is continuous, we obtain from Lemma \ref{lem:kappa-estimate}
and \eqref{eq:Phi-extended(formulas)} that for any given $n\ge 1$ the component map 
\begin{equation*}
\Phi_n : H^{-1/2,\sqrt{\log}}_{r,0}\to\C
\end{equation*}
is continuous. Now, we take a sequence $(u_k)_{k\ge 1}$ in $H^{-1/2,\sqrt{\log}}_{r,0}$ that converges to
$u$ in $H^{-1/2,\sqrt{\log}}_{r,0}$. Since the set $\{u_k\,|\,k\ge 1\}$ is pre-compact in  $H^{-1/2,\sqrt{\log}}_{r,0}$,
we conclude from (b) that $\big\{\Phi(u_k)\,\big|\,k\ge 1\big\}$ is pre-compact in $\h^{0,\sqrt{\log}}_+$.
This implies that any subsequence of $\big(\Phi(u_k)\big)_{k\ge 1}$ has a convergent subsequence.
Since $\Phi_n(u_k)\to\Phi_n(u)$ as $k\to\infty$, we then conclude that $\Phi(u_k)\to\Phi(u)$ as $k\to\infty$
in $\h^{0,\sqrt{\log}}_+$.
The ontoness of the Birkhoff map then follows since \eqref{eq:Phi-extended} is continuous, proper, and has 
a dense image in $\h^{0,\sqrt{\log}}_+$.

\medskip

(d) {\em The map $\Phi^{-1} : \h^{0,\sqrt{\log}}_+\to H^{-1/2,\sqrt{\log}}_{r,0}$ is continuous.}
This statement follows from the arguments in (c) and the fact that the components of 
$\Phi^{-1} : \h^{0,\sqrt{\log}}_+\to H^{-1/2,\sqrt{\log}}_{r,0}$ are continuous. 
The latter follows easily from \cite[Lemma 4.2]{GK} and Cauchy's formula.
\end{proof}

In the proof of Theorem \ref{th:Birkhoff_map} we use the following characterization of pre-compact sets in 
$H^{-1/2,\sqrt{\log}}_+$ (and $\h^{0,\sqrt{\log}}_+$). The proof follows easily from 
Cantor's diagonalization process.

\begin{Lem}\label{lem:pre-compact_sets}
A set $K$ is pre-compact in $H^{-1/2,\sqrt{\log}}_+$ if and only if for any $\varepsilon>0$ there exist
an integer $N_\varepsilon\ge 0$ and $R_\varepsilon>0$ such that for any $u\in K$,
\begin{equation*}
\sum_{n\ge N_\varepsilon}\frac{\log(\n+1)}{\n}\,|z_n|^2\le\varepsilon^2,\quad
\sum_{0\le n<N_\varepsilon}|z_n|^2\le R_\varepsilon^2,
\end{equation*}
where $z_n\equiv\widehat{u}(n)$, $n\ge 0$.\\ A similar condition (that involves the weight $\log(\n+1)$
instead of $\frac{\log(\n+1)}{\n}$) characterizes the pre-compact sets in $\h^{0,\sqrt{\log}}_+$.
\end{Lem}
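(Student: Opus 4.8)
The plan is to prove a standard Fréchet-type compactness criterion for the weighted Hardy sequence space, using the fact that the weight $\frac{\log(\n+1)}{\n}$ tends to $0$ as $n\to\infty$, which is precisely what makes the high-frequency tail a compact perturbation. First I would recall that $H^{-1/2,\sqrt{\log}}_+$ is isometric (via the Fourier transform $u\mapsto(\widehat u(n))_{n\ge 0}$) to the weighted sequence space with weight $w_n:=\frac{\log(\n+1)}{\n}$ for $n\ge 1$ (and $w_0=1$), so it suffices to characterize pre-compactness in that sequence space. The key structural observation is that the embedding of $\ell^2_{\ge 0}$ into this weighted space is compact, because $w_n\to 0$; equivalently, bounded sets that are uniformly small in their high-frequency tails are pre-compact.

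The core of the argument is the two implications. For necessity, I would argue that if $K$ is pre-compact, then $K$ is totally bounded, and a finite $\varepsilon/2$-net together with the fact that each individual element has a rapidly decaying weighted tail forces a uniform tail bound: given $\varepsilon>0$, cover $K$ by finitely many balls of radius $\varepsilon/2$, choose $N_\varepsilon$ large enough that all the (finitely many) centers have weighted tail below $\varepsilon/2$ beyond $N_\varepsilon$, and then the triangle inequality gives the uniform tail estimate $\sum_{n\ge N_\varepsilon}w_n|z_n|^2\le\varepsilon^2$ for every $u\in K$; the uniform bound on the low-frequency block $\sum_{0\le n<N_\varepsilon}|z_n|^2\le R_\varepsilon^2$ is immediate since $K$ is bounded in the weighted norm and $w_n$ is bounded below on the finite range $0\le n<N_\varepsilon$. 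For sufficiency, I would invoke Cantor's diagonalization exactly as the statement suggests: given a sequence in $K$, the uniform low-frequency bound lets me extract (by diagonalization over the finitely many coordinates $n<N_\varepsilon$ for each $\varepsilon=1/m$) a subsequence whose low-frequency parts converge, and the uniform tail smallness shows this subsequence is Cauchy in the weighted norm; completeness of the space then yields a limit in $H^{-1/2,\sqrt{\log}}_+$, establishing pre-compactness.

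The translation to $\h^{0,\sqrt{\log}}_+$ is essentially identical: there the relevant weight is $\log(\n+1)$, so the natural comparison is with the embedding of $\h^{1/2}_+$ (or any slightly higher-regularity space) into $\h^{0,\sqrt{\log}}_+$, which is again compact because $\frac{\log(\n+1)}{\n}\to 0$. The same two-part argument, total boundedness in one direction and diagonalization plus a Cauchy estimate in the other, applies verbatim after replacing $w_n=\frac{\log(\n+1)}{\n}$ by $w_n=\log(\n+1)$ and adjusting the ambient reference norm accordingly.

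I do not expect any serious obstacle here, since this is a routine characterization of pre-compactness in a weighted $\ell^2$ space whose weight is bounded on finite blocks and whose comparison weight decays at infinity. The only point requiring a little care is the bookkeeping in the necessity direction, where one must transfer a uniform tail estimate from a finite net to all of $K$ via the triangle inequality, and the observation that the low-frequency block inherits a uniform bound from boundedness of $K$; both are standard but should be stated cleanly so that the splitting $\Phi(K)=\mathcal{I}_1+\mathcal{I}_2$ used in the proof of Theorem \ref{th:Birkhoff_map} is justified by exactly this lemma.
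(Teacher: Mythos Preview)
Your proposal is correct and matches the paper's approach: the paper itself gives no detailed proof, simply stating that ``the proof follows easily from Cantor's diagonalization process,'' and your two-direction argument (finite $\varepsilon/2$-net plus triangle inequality for necessity, diagonal extraction plus uniform tail smallness for sufficiency) is exactly the standard elaboration of that hint.
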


Corollary \ref{coro:well-posedness} follows from the arguments in \cite[Section 5]{GKT1}
(see also \cite[Section 4]{KT1}).

\section{The lack of weak continuity of the flow map and of the Birkhoff map}\label{sec:counterexample}
In this section we prove Proposition \ref{prop:noweakcontinuity}. For this, we revisit the counterexample to 
wellposedness in $H^{-1/2}_{r,0}(\T)$ constructed in \cite{GKT1}, of which we recall the setting.\\
We consider potentials of the form $u_{0,q}(x)=v_q( {\rm e}^{ix} )+\overline {v_q( {\rm e}^{ix} )}$, where $v$ is 
the following Hardy function, defined in the unit disc by
$$
v_q(z)= \frac{\e qz}{1-qz}\ , \quad 0< \e < q<1 \ ,  \qquad  |z|<1\ .
$$
Note that
\begin{equation}\label{normofu}
\| u_{0,q}\|_{-1/2,\sqrt{\log }}^2=2\e ^2\sum_{n=1}^\infty n^{-1}\log (1+n)\, q^{2n}\sim \e ^2(\log (1-q))^2\ ,
\end{equation}
as $q$ tends to $1$. We choose
\begin{equation}\label{epsq}
\e =\frac{\beta }{|\log (1-q)|}
\end{equation}
where $\beta >0$ is a positive parameter which will be fixed later. Therefore we have
$\| u_{0,q}\|_{-1/2,\sqrt{\log }}\to\beta $ as $q\to 1$, and $u_{0,q}$ tends weakly to $0$ in $H^{-1/2,\sqrt{\log}}_{r,0}$.
The study of the Lax operator $L_{u_{0,q}}$ reduces to the study of a first order linear differential equation in the complex domain,
which is processed in \cite{GKT1}. From this analysis, we infer that $-\mu $ is a negative eigenvalue of $L_{u_{0,q}}$ if and 
only if $F(\mu, q)=0$, where
\begin{eqnarray*}
F(\mu, q)&=&F_+(\mu, q)-F_-(\mu, q)\ ,\\
F_+(\mu, q)&:=&\int_0^q \frac{\mu t^{\e +\mu}(1-qt)^\e }{t(q-t)^\e }\, dt\ ,\\ 
F_-(\mu, q)&:=&\int_0^q \frac{ \e qt^{\e +\mu}(1-qt)^\e }{(q-t)^\e (1-qt) }\, dt \ .
\end{eqnarray*}
and where we recall that $\e $ is given by \eqref{epsq}.
Moreover, $F(\mu, q)>0$ for $\mu =\e q^2/(1-q^2)$, and, as $q\to 1$, for every fixed $\mu >0$,
$$F_+(\mu ,q)\to 1\ ,\ F_-(\mu ,q)\sim -\e \log (1-q^2)\to \beta \ .$$
Consequently, $F(\mu, q)\to 1-\beta $ as $q\to 1$. Let us now choose $\beta >1$. Then we infer that $F(\mu, q)$ must vanish 
for some $\mu _q$ tending to $+\infty $ as $q$ tends to $1$. Furthermore, since $\partial_\mu F(\mu, q)>0$ if $F(\mu, q)=0$, 
we know that such a zero $\mu_q$ is unique. We conclude that $L_{u_{0,q}}$ has a unique negative eigenvalue
$\lambda_0(u_{0,q})=-\mu_q$, and that this eigenvalue tends to $-\infty$. Consequently,
$$\gamma_1(u_{0,q})=\lambda_1(u_{0,q})-\lambda_0(u_{0,q})\to +\infty $$
and therefore the function $\gamma_1$ is not weakly continuous on $H^{-1/2,\sqrt{\log}}_{r,0}$.
A fortiori, $\Phi :H^{-1/2,\sqrt{\log}}_{r,0}\to \h^{0,\sqrt{\log}}_+$ is not weakly continuous.\\
Finally, we prove that the flow map is not weakly continuous in the same way than in \cite{GKT1}. 
Denote by $u_q$  the Benjamin--Ono solution with the initial datum $u_{0,q}$. 
Then it is proved in \cite{GKT1} that the function $$\xi_q(t)=\langle u_q(t)\vert {\rm e}^{ix}\rangle $$
is bounded and satisfies, for every finite interval $I$,
$$\left |\int_I \xi_q(t){\rm e}^{-it(1-2\mu_q)}\, dt\right | =\sqrt 2 |I|+O\left (\frac{1}{\mu_q}\right )\ .$$
Hence $\xi_q(t)$ cannot tend to $0$ on any  time interval of positive length. 
This completes the proof of Proposition \ref{prop:noweakcontinuity}.

\section{The convolution in log-spaces}\label{sec:composition}
In this section, we discuss basic properties of the convolution in the spaces with logarithmic weights and
prove Proposition \ref{prop:no_analyticity} formulated in the Introduction.

\medskip

We will first prove the following auxiliary lemma.

\begin{Lem}\label{lem:convolution}
There exist $(x_n)_{n\in\Z}\in\h^{-1/2,\sqrt{\log}}_{r,0}$ and  $(y_n)_{n\in\Z}\in\h^{1/2,\sqrt{\log}}_{r,0}$
such that the sequence  $z_n:=\sum_{k\ge 0,k\ne n}x_ky_{n-k}$, $n\ge 1$,
does {\em not} belong to $\h^{-1/2,\sqrt{\log}}_+$.
\end{Lem}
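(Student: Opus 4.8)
The plan is to read off $z_n$ as an (anti)convolution and isolate the single interaction that is responsible for the failure. Since $(x_n)$ and $(y_n)$ are Hermitian ($x_{-n}=\overline{x_n}$, $y_{-n}=\overline{y_n}$), I would split
\[
z_n=\sum_{0\le k<n}x_k y_{n-k}+\sum_{k>n}x_k\,\overline{y_{k-n}},\qquad n\ge 1,
\]
where in the second sum I used $y_{n-k}=\overline{y_{k-n}}$. The first (``low $\times$ low'') piece is harmless: a direct application of Young/Cauchy--Schwarz in the $\n^{\pm1/2}\sqrt{\log}$ weights shows it lands in $\h^{-1/2,\sqrt{\log}}_+$. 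The whole point is the second, \emph{resonant} piece $R_n:=\sum_{k>n}x_k\overline{y_{k-n}}=\sum_{j\ge 1}x_{n+j}\,\overline{y_j}$, which is a high$\times$high$\to$low interaction: for a \emph{fixed low} output index $n$ it collects contributions from arbitrarily high input frequencies. I would therefore design $x,y$ so that the low-frequency tail of $(R_n)$ decays too slowly to be weighted by $\n^{-1}\log(\n+1)$, and so that the harmless piece cannot cancel it (e.g. by taking $x,y\ge 0$, so that all terms add constructively and $|z_n|\ge R_n$).

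For the construction itself I would work with a dyadic family of blocks $I_k=[2^k,2^{k+1})$ and put $x_m=\xi_k$, $y_m=\upsilon_k$ constant on $I_k$ (extended Hermitianly, with $x_0=y_0=0$), choosing the amplitudes so that the two input norms are controlled by $\sum_k k\,\xi_k^2$ and $\sum_k k\,2^{2k}\upsilon_k^2$ respectively, while the block correlation contributes $\sqrt{X_kY_k}\sim 2^k\xi_k\upsilon_k$ to each $R_n$ with $n\ll 2^k$. The heuristic I would verify first is the \emph{endpoint (scale-invariant) balance}: for a single block one computes $\|x\|_{-1/2,\sqrt\log}^2\,\|y\|_{1/2,\sqrt\log}^2\sim \|R\|_{-1/2,\sqrt\log}^2$, i.e. the bilinear estimate is critical at each scale. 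The counterexample must then come from summing over scales: a fixed output band $I_{k_0}$ receives the full tail $\sum_{k\ge k_0}\sqrt{X_kY_k}$, and the weighted output norm is comparable to $\sum_{k_0} k_0\big(\sum_{k\ge k_0}\sqrt{X_kY_k}\big)^2$.

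The heart of the proof — and the step I expect to be the main obstacle — is to choose the amplitudes (equivalently the ``block masses'' $w_k=\sqrt{X_kY_k}$, tuned near the borderline decay, e.g. with tails of size $\sim(\log k)^{-1/2}$ or a comparable iterated-logarithmic profile) so that the input conditions $\sum_k k\,\xi_k^2<\infty$ and $\sum_k k\,2^{2k}\upsilon_k^2<\infty$ hold, hence $(x_n)\in\h^{-1/2,\sqrt{\log}}_{r,0}$ and $(y_n)\in\h^{1/2,\sqrt{\log}}_{r,0}$, while the output sum $\sum_n \n^{-1}\log(\n+1)\,|z_n|^2\ge \sum_n \n^{-1}\log(\n+1)\,R_n^2$ diverges. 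This is a delicate endpoint bookkeeping: the logarithmic weights sit exactly at the threshold where the geometric-mean inequality $\sqrt{X_kY_k}\le\tfrac12(2^{-k}X_k\cdot(\ldots)+\ldots)$ is saturated, so the divergence has to be extracted from the \emph{coherent superposition} of the off-diagonal cross terms $R_n^2=\sum_{k,k'}(\text{block }k)(\text{block }k')$ rather than from any single scale; quantifying this cross-scale accumulation and confirming it overwhelms the finite input budget (a slowly varying, logarithmic blow-up) is precisely the computation I would carry out in detail. Once $(R_n)\notin\h^{-1/2,\sqrt{\log}}_+$ is established, the non-cancellation from $x,y\ge 0$ gives $(z_n)\notin\h^{-1/2,\sqrt{\log}}_+$ and completes the proof.
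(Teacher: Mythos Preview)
Your proposal misidentifies the resonant interaction, and this is a genuine gap rather than a detail to be filled in.

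The piece you dismiss as ``harmless'', $\sum_{0\le k<n}x_ky_{n-k}$, is \emph{not} controlled by Cauchy--Schwarz in these weights; the paper extracts the divergence precisely from the sub-range $n/2<k<n$ of this sum. There $x_k$ sits at frequency $\sim n$ while $y_{n-k}$ sits at low frequency $l=n-k<n/2$, and with the explicit choice $x_n=\sqrt{\n/\log(\n+1)}\,a_n$, $y_n=b_n/\sqrt{\n\log(\n+1)}$, $a_n=b_n=\big(\n\log(\n+1)\big)^{-1/2}\big(\log\log(\n+1)\big)^{-3/4}$, the inner low-frequency sum $\sum_{0<l<n/2}b_l/\sqrt{l\log(l+1)}$ grows like $(\log\log n)^{1/4}$, just enough to make $\sum_n a_n^2(\log\log n)^{1/2}$ diverge. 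So the operative mechanism is high-$x$ $\times$ low-$y$ $\to$ high output, entirely inside the term you discarded.

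Conversely, the high$\times$high$\to$low tail you isolate is actually \emph{bounded}. In your own dyadic bookkeeping, the contribution to $R_n$ for $n\in I_{k_0}$ from blocks $k\ge k_0$ is $W_{k_0}:=\sum_{k\ge k_0}2^k\xi_k\upsilon_k$, and by Cauchy--Schwarz
\[
\sum_{k_0}k_0W_{k_0}^2\le\sum_{k_0}\Big(k_0\!\sum_{k\ge k_0}\xi_k^2\Big)\Big(\sum_{k\ge k_0}2^{2k}\upsilon_k^2\Big)\le\Big(\sum_k k\,\xi_k^2\Big)\Big(\sum_k k\,2^{2k}\upsilon_k^2\Big)\sim\|x\|_{-1/2,\sqrt\log}^2\|y\|_{1/2,\sqrt\log}^2,
\]
using $k_0\sum_{k\ge k_0}\xi_k^2\le\sum_k k\,\xi_k^2$ and then Fubini in $k_0$. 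Thus the ``coherent superposition of off-diagonal cross terms'' you anticipate cannot produce divergence: the endpoint balance you noticed at a single scale survives the sum over scales for this interaction. Your dyadic framework is fine in spirit, but you must redirect it to the regime $k\sim n$, $|n-k|\ll n$ --- which is exactly the part you threw away.
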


\begin{proof}[Proof of Lemma \ref{lem:convolution}]
Assume that $(x_n)_{n\in\Z}\in\h^{-1/2,\sqrt{\log}}_{r,0}$, $(y_n)_{n\in\Z}\in\h^{1/2,\sqrt{\log}}_{r,0}$,
and $z_n:=\sum_{k\ge 0,k\ne n}x_ky_{n-k}$ for $n\ge 1$.
Then, we can write
\[
x_n\!=\!\frac{\sqrt{\n}}{\sqrt{\log(\n+1)}}\,a_n,\,
y_n\!=\!\frac{1}{\sqrt{\n}\sqrt{\log(\n+1)}}\,b_n,\,
z_n\!=\!\frac{\sqrt{\n}}{\sqrt{\log(\n+1)}}\,c_n
\]
where $a:=(a_n)_{n\in\Z}\in\ell^2_{r,0}$, $b:=(b_n)_{n\in\Z}\in\ell^2_{r,0}$ and 
$c:=(c_n)_{n\ge 1}$ is a complex-valued sequence.
Since $z_n=\sum_{k\ge 0,k\ne n}x_ky_{n-k}$ we obtain that for $n\ge 1$,
\begin{equation}\label{eq:c_n}
c_n=\sum_{k\ge 0,k\ne n}a_k\frac{b_{n-k}}{\sqrt{\1 n-k\2\log(\1 n-k\2+1)}}\,B_{n,k}
\end{equation}
where 
\begin{equation}\label{eq:B_n,l}
B_{n,k}:=\left(\frac{\1 k\2\log(\n+1)}{\n\log(\1 k\2+1)}\right)^{1/2}.
\end{equation}
The lemma will follow once we construct $(a_n)_{n\in\Z}, (b_n)_{n\in\Z}\in\ell^2_{r,0}$ such that 
$(c_n)_{n\ge 1}\notin\ell^2_+$. Assume that the elements of the sequences $a,b\in\ell^2_{r,0}$
are chosen real-valued and non-negative,
\[
a_n\ge 0,\quad b_n\ge 0,\quad n\in\Z\,.
\]
Note that the sequence $\big(\1 k\2/\log(\1 k\2+1)\big)_{k\ge 1}$ is monotone increasing. 
This together with \eqref{eq:B_n,l} implies that there exists a constant $C>0$ such that 
for any $n\ge 1$ and $n/2<k<n$ we have that
\[
B_{n,k}\ge B_{n,[n/2]}\ge C>0
\]
where $[n/2]$ denotes the integer part of $n/2$.
We then obtain from \eqref{eq:c_n} that
\begin{align}
\|c\|_{\ell^2_+}^2&\ge
\sum_{n\ge 1}\Big(\sum_{n/2<k<n}a_k\frac{b_{n-k}}{\sqrt{\1 n-k\2\log(\1 n-k\2+1)}}\,B_{n,k}\Big)^2\nonumber\\
&\ge C\sum_{n\ge 1}\Big(\sum_{n/2<k<n}a_k\frac{b_{n-k}}{\sqrt{\1 n-k\2\log(\1 n-k\2+1)}}\Big)^2.\label{eq:c-estimate1}
\end{align}
Now, assume that the sequence $a\in\ell^2_{r,0}$ is chosen so that $(a_n)_{n\ge 1}$ is monotone decreasing. 
Then, in view of \eqref{eq:c-estimate1},
\begin{equation}\label{eq:c-estimate2}
\|c\|_{\ell^2_+}^2\ge C\sum_{n\ge 1}a_n^2\Big(\sum_{0<l<n/2}\frac{b_l}{\sqrt{\1 l\2\log(\1 l\2+1)}}\Big)^2
\end{equation}
where we passed to the index $l:=n-k$ in the internal sum.
By choosing $b_0:=0$ and
\[
b_l:=\frac{1}{\sqrt{\1 l\2\log(\1 l\2+1)}\big(\log(\log(\1 l\2+1))\big)^{3/4}},\quad  |l|\ge 1,
\]
we see that $b\in\ell^2_{r,0}$ and by the integral test's estimate
\begin{align}
\sum_{0<l<n/2}\frac{b_l}{\sqrt{\1 l\2\log(\1 l\2+1)}}
&=\sum_{0<l<n/2}\frac{1}{\1 l\2\log(\1 l\2+1)\big(\log(\log(\1 l\2+1)\big)^{3/4}}\nonumber\\
&\ge C_1\big(\log(\log(\n+1))\big)^{1/4}\label{eq:c-estimate3}
\end{align}
for some positive constant $C_1>0$ independent of $n\ge 1$.
Hence, by \eqref{eq:c-estimate2} and \eqref{eq:c-estimate3},
\begin{equation}\label{eq:c-estimate4}
\|c\|_{\ell^2_+}^2\ge C_2\sum_{n\ge 1}\Big(\big(\log(\log(\n+1))\big)^{1/4}a_n\Big)^2
\end{equation}
for some constant $C_2>0$ independent of $n\ge 1$.
If we now choose $a_0:=0$ and
\begin{equation}\label{eq:a_n}
a_n:=b_n\equiv\frac{1}{\sqrt{\n\log(\n+1)}\big(\log(\log(\n+1))\big)^{3/4}},\quad |n|\ge 1,
\end{equation}
we obtain that $a\in\ell^2_{r,0}$ and the series on the right hand side of \eqref{eq:c-estimate4}
diverges by the integral test. This completes the proof of the lemma.
\end{proof}

For the proof of Proposition \ref{prop:no_analyticity} we will need the following 
variant of Lemma \ref{lem:convolution}. For $s>-1/2$ consider the quadratic form
\begin{equation}\label{eq:Q}
\h^s_{r,0}\to\h^s_+,\quad x\mapsto 
Q(x):=\Big(\frac{1}{\sqrt{n}}\sum_{k\ge 0,k\ne n}x_k\frac{x_{n-k}}{n-k}\Big)_{n\ge 1}.
\end{equation}
The quadratic form \eqref{eq:Q} is well-defined and bounded by Lemma 3.1 in \cite{GKT3}.

\begin{Lem}\label{lem:Q}
There exists $x\in\h^{-1/2,\sqrt{\log}}_{r,0}$ such that $Q(x)\in\ell^2_+$ but 
$Q(x)$ does \em{not} belong to $\h^{0,\sqrt{\log}}_+$ .
\end{Lem}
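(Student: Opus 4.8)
The plan is to reuse the very sequence produced in the proof of Lemma~\ref{lem:convolution}, now read as a single sequence, and to supplement that construction with one additional \emph{upper} bound on the convolution. Concretely, I would take $x=(x_n)_{n\in\Z}\in\h^{-1/2,\sqrt{\log}}_{r,0}$ as in Lemma~\ref{lem:convolution}, where the final choice $a=b$ gives $x_n\asymp\frac{1}{\log(\n+1)(\log\log(\n+1))^{3/4}}$ for $|n|\ge 1$ and the auxiliary sequence is $y_n=x_n/n\in\h^{1/2,\sqrt{\log}}_{r,0}$. With this $x$ the quadratic form \eqref{eq:Q} satisfies $\sqrt{n}\,Q(x)_n=z_n:=\sum_{k\ge 0,\,k\ne n}x_k\,\frac{x_{n-k}}{n-k}=\sum_{k\ge 0,\,k\ne n}x_k\,y_{n-k}$, i.e. $Q(x)_n=z_n/\sqrt{n}$, where $z$ is precisely the sequence of Lemma~\ref{lem:convolution}.

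First I would record the elementary identity of norms
\[
\|Q(x)\|_{\h^{0,\sqrt{\log}}_+}^2=\sum_{n\ge 1}\log(\n+1)\,|Q(x)_n|^2
=\sum_{n\ge 1}\frac{\log(\n+1)}{\n}\,|z_n|^2=\|z\|_{\h^{-1/2,\sqrt{\log}}_+}^2 .
\]
Thus the non-membership $Q(x)\notin\h^{0,\sqrt{\log}}_+$ is \emph{exactly} the conclusion $z\notin\h^{-1/2,\sqrt{\log}}_+$ of Lemma~\ref{lem:convolution}, and so requires no new argument.

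The remaining, genuinely new, task is to establish $Q(x)\in\ell^2_+$, i.e. $\sum_{n\ge 1}|z_n|^2/\n<\infty$. Since $x\notin\ell^2$ (and in fact $x\notin\h^s$ for every $s>-1/2$), the boundedness statement Lemma~3.1 of \cite{GKT3} is \emph{not} available here and the estimate must be obtained directly. For this it suffices to prove the pointwise bound $|z_n|\lesssim 1/\log(\n+1)$, since then $\sum_{n\ge 1}|z_n|^2/\n\lesssim\sum_{n\ge 1}\frac{1}{\n(\log(\n+1))^2}<\infty$. To get it I would split the defining sum for $z_n$ by the size of the indices, using that $x$ and $y$ are positive and eventually decreasing: the ``diagonal'' block $n/2<k<n$ is bounded by $x_{\lceil n/2\rceil}\sum_{l\le n/2}y_l\lesssim x_{\lceil n/2\rceil}(\log\log n)^{1/4}\lesssim 1/\log n$ (the partial sums $\sum_{l\le N}y_l\lesssim(\log\log N)^{1/4}$ by the integral test); the complementary block $0<k\le n/2$ is bounded by $y_{\lceil n/2\rceil}\sum_{k\le n/2}x_k\lesssim 1/\log n$; and the negative-frequency tail $k>n$, which after setting $m=k-n$ becomes $\sum_{m\ge 1}x_{n+m}y_m$, is split at $m=n$ and controlled using $\sum_{m\le n}y_m\lesssim(\log\log n)^{1/4}$ together with $\sum_{m>n}x_m^2/m\lesssim 1/\log n$. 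Each block is $\lesssim 1/\log(\n+1)$, yielding the desired bound.

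The main obstacle is precisely this upper estimate on $z_n$. Lemma~\ref{lem:convolution} only furnishes a \emph{lower} bound, obtained by restricting to a single block and to positive terms, which is all that is needed there; by contrast $Q(x)\in\ell^2_+$ forces matching two-sided control over the \emph{entire} range of summation, including the negative-frequency tail and the interplay of the slowly varying partial sums $\sum x_l$ and $\sum y_l$. Once $|z_n|\lesssim 1/\log(\n+1)$ is secured, the lemma follows by combining it with the norm identity above, which already identifies the failure of membership in $\h^{0,\sqrt{\log}}_+$ with the divergence proved in Lemma~\ref{lem:convolution}.
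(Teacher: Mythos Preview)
Your overall plan mirrors the paper's: take the specific $x$ from the proof of Lemma~\ref{lem:convolution} and deduce both conclusions. Your block-by-block upper bound $|z_n|\lesssim 1/\log(\n+1)$, giving $Q(x)\in\ell^2_+$, is a correct and welcome addition that the paper leaves implicit.

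The gap is in the other direction. You assert that with $y_m=x_m/m$ the resulting $z$ ``is precisely the sequence of Lemma~\ref{lem:convolution}'', so that $Q(x)\notin\h^{0,\sqrt{\log}}_+$ ``requires no new argument''. This identification is wrong: in Lemma~\ref{lem:convolution} the auxiliary $y$ is chosen \emph{even} (real $b_l\ge 0$ forces $y_{-l}=y_l$, which is what membership in $\h^{1/2,\sqrt{\log}}_{r,0}$ means for real sequences), whereas your $y_m=x_m/m$ is \emph{odd} and does not lie in $\h^{1/2,\sqrt{\log}}_{r,0}$. The sign matters because the lower bound in Lemma~\ref{lem:convolution} is obtained by discarding all terms except those with $n/2<k<n$; this is legitimate there only because every term of $c_n$ is nonnegative. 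For the actual $z_n=\sqrt{n}\,Q(x)_n$ the contributions with $k>n$ carry the opposite sign and cannot be dropped. Concretely, writing $z_n=\sum_{0<l<n}(x_{n-l}-x_{n+l})\,x_l/l-\sum_{l\ge n}x_{n+l}\,x_l/l$, one checks (using $x_{n-l}-x_{n+l}\asymp l\,n^{-1}(\log n)^{-2}(\log\log n)^{-3/4}$ for $l\le n/2$) that the first sum is $O\big((\log n)^{-2}(\log\log n)^{-3/2}\big)$ while the tail is $\asymp(\log n)^{-1}(\log\log n)^{-3/2}$. Thus $|z_n|\asymp(\log n)^{-1}(\log\log n)^{-3/2}$, a full power of $\log\log n$ below the diagonal block alone, and then $\sum_n\frac{\log n}{n}|z_n|^2\asymp\sum_n\frac{1}{n\log n(\log\log n)^{3}}<\infty$. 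So for this $x$ your argument does not yield $Q(x)\notin\h^{0,\sqrt{\log}}_+$; an additional argument handling the signed $k>n$ block (or a different choice of sequence) is needed. The paper's proof is terse on exactly this point, so the issue is not a deviation from the paper but a place where more has to be said.
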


\begin{proof}[Proof of Lemma \ref{lem:Q}]
We set $x_0:=0$ and
\[
x_n:=\frac{\sqrt{\n}}{\sqrt{\log(\n+1)}}\,a_n=\frac{1}{\log(\n+1)\big(\log(\log(\n+1))\big)^{3/4}}\quad |n|\ge 1,
\]
where $(a_n)_{n\in\Z}$ is the sequence \eqref{eq:a_n} from the proof of Lemma \ref{lem:convolution}. 
The fact that the sequence $x:=(x_n)_{n\in\Z}$ satisfies the conditions of the lemma follows easily from the proof
of Lemma \ref{lem:convolution}.
\end{proof}

\begin{Coro}\label{coro:Q}
The quadratic form \eqref{eq:Q} cannot be extended to a bounded quadratic form
$Q : \h^{-1/2,\sqrt{\log}}_{r,0}\to\h^{0,\sqrt{\log}}_+$.
\end{Coro}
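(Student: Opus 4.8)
The plan is to argue by contradiction, deducing a violation of Lemma \ref{lem:Q} from the existence of a bounded extension. Suppose $Q$ extends to a bounded quadratic form $Q : \h^{-1/2,\sqrt{\log}}_{r,0}\to\h^{0,\sqrt{\log}}_+$, i.e. $\|Q(x)\|_{\h^{0,\sqrt{\log}}_+}\le M\|x\|_{\h^{-1/2,\sqrt{\log}}_{r,0}}^2$ for some $M>0$. First I would record that such a bound makes $Q$ continuous. Polarizing, the associated symmetric bilinear form $B(x,y):=\tfrac12\big(Q(x+y)-Q(x)-Q(y)\big)$ is bounded, $\|B(x,y)\|\le C\|x\|\,\|y\|$, and since $Q(x)-Q(x')=B(x+x',x-x')$, the map $Q$ is (locally Lipschitz, hence) continuous on $\h^{-1/2,\sqrt{\log}}_{r,0}$.

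Next I would exploit density. The finitely supported sequences form a dense subspace of $\h^{-1/2,\sqrt{\log}}_{r,0}$, and each such sequence lies in $\h^s_{r,0}$ for every $s>-1/2$, where the original defining formula \eqref{eq:Q} applies. Let $x\in\h^{-1/2,\sqrt{\log}}_{r,0}$ be the sequence produced by Lemma \ref{lem:Q}, and let $x^{(N)}$ denote its truncation to indices $|n|\le N$. Then $x^{(N)}\to x$ in $\h^{-1/2,\sqrt{\log}}_{r,0}$, so by the continuity established above, $Q(x^{(N)})\to Q(x)$ in $\h^{0,\sqrt{\log}}_+$, and in particular componentwise (each coordinate functional being bounded on $\h^{0,\sqrt{\log}}_+$).

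On the other hand, for each fixed $n\ge 1$ the $n$-th component of $Q(x^{(N)})$, namely the finite sum $\tfrac1{\sqrt n}\sum_{k\ge 0,\,k\ne n}x_k^{(N)}\,x_{n-k}^{(N)}/(n-k)$, converges as $N\to\infty$ to the $n$-th component of the sequence $Q(x)$ computed from \eqref{eq:Q}; the series over $k$ is absolutely convergent for this explicit $x$, since Lemma \ref{lem:Q} guarantees $Q(x)\in\ell^2_+$. Comparing the two componentwise limits forces the purported extension to coincide, as a sequence, with the $Q(x)$ of Lemma \ref{lem:Q}. But that sequence does \emph{not} belong to $\h^{0,\sqrt{\log}}_+$, contradicting $Q(x)\in\h^{0,\sqrt{\log}}_+$. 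This contradiction proves the corollary.

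The step I expect to require the most care is the identification of the limit: I must make sure that norm convergence in $\h^{0,\sqrt{\log}}_+$ yields componentwise convergence (immediate from boundedness of the coordinate functionals) and that the truncated finite sums converge to the correct entries of $Q(x)$, which rests on the absolute convergence built into Lemma \ref{lem:Q}. Everything else is a routine density-and-continuity argument.
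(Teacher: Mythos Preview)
Your argument is correct and is essentially the explicit version of what the paper leaves implicit: the corollary is stated with no proof, as an immediate consequence of Lemma~\ref{lem:Q}, and your density-plus-continuity argument is exactly how one unpacks that implication. The only minor imprecision is your justification for the componentwise convergence of $Q(x^{(N)})_n$ to $Q(x)_n$: the membership $Q(x)\in\ell^2_+$ does not by itself yield absolute convergence of each defining series, but what you actually need---that the partial sums in $k$ converge to $Q(x)_n$---is precisely the assertion that the formula \eqref{eq:Q} is well-defined for this $x$, which is built into the statement of Lemma~\ref{lem:Q}.
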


On a side note, let us also mention that the arguments in the proof of Lemma \ref{lem:convolution} above imply
that in contrast to the boundedness of the maps \eqref{eq:T,L,s>-1/2} we have

\begin{Coro}\label{coro:Toeplitz_operator}
There exist $u\in H^{-1/2,\sqrt{\log}}_{r,0}$ and $f\in H^{1/2,\sqrt{\log}}_+$ such that
$T_u f\in H^{-1/2}_+$ but $T_u f\notin H^{-1/2,\sqrt{\log}}_+$.
\end{Coro}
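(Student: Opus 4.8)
The plan is to extract $u$ and $f$ directly from the sequences constructed in the proof of Lemma \ref{lem:convolution}, and to recognise the Fourier coefficients of $T_u f=\Pi(uf)$ as exactly the convolution studied there. I would take $u$ to be the real potential with $\widehat u(k)=x_k$, where $(x_n)_{n\in\Z}$ is the even, nonnegative sequence of Lemma \ref{lem:convolution}; since $x$ is even with $x_0=0$, $u$ is real-valued of zero mean and $u\in H^{-1/2,\sqrt{\log}}_{r,0}$. For the Hardy datum I would set $\widehat f(m):=y_m$ for $m\ge 0$ and $\widehat f(m):=0$ for $m<0$, where $(y_n)_{n\in\Z}\in\h^{1/2,\sqrt{\log}}_{r,0}$ is the companion sequence; as $y_m\ge 0$ this defines an honest element $f\in H^{1/2,\sqrt{\log}}_+\subset H^{1/2}_+$.

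With these choices the first assertion is immediate: since $f\in H^{1/2}_+$, Corollary \ref{coro:main_inequality} yields $T_u f=\Pi(uf)\in H^{-1/2}_+$, with $\|T_u f\|_{-1/2}\le K_0\|u\|_{-1/2,\sqrt{\log}}\|f\|_{1/2}$. The whole content is therefore to show that $T_u f\notin H^{-1/2,\sqrt{\log}}_+$.

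To that end I would compute, for $n\ge 0$, $\widehat{T_u f}(n)=\sum_{k\le n}\widehat u(k)\,\widehat f(n-k)=\sum_{j\ge 0}x_{n-j}\,y_j$, via the substitution $j=n-k$ together with the facts that $\widehat f$ is supported in $j\ge 0$ and $x_0=0$. Every summand is nonnegative, so I may retain only the block $0\le j<n/2$; there $n-j$ is a positive index with $\1 n-j\2=n-j\in(n/2,n]$, the weight factor $\big(\1 n-j\2\log(\n+1)/(\n\log(\1 n-j\2+1))\big)^{1/2}$ is bounded below by a positive constant exactly as the factor $B_{n,k}$ in \eqref{eq:B_n,l}, and monotonicity of $(a_m)_{m\ge1}$ gives $a_{n-j}\ge a_n$. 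Writing $\widehat{T_u f}(n)=\tfrac{\sqrt{\n}}{\sqrt{\log(\n+1)}}\,\tilde c_n$ and relabelling $k=n-j$, this lower bound becomes precisely the one driving \eqref{eq:c-estimate1}--\eqref{eq:c-estimate2}, so that $\sum_{n\ge1}|\tilde c_n|^2\gtrsim\sum_{n\ge1}(\log\log(\n+1))^{1/2}a_n^2=\infty$ by \eqref{eq:c-estimate3}--\eqref{eq:c-estimate4} and the choice \eqref{eq:a_n}. Since $\|T_u f\|_{-1/2,\sqrt{\log}}^2=\sum_{n\ge0}\n^{-1}\log(\n+1)\,|\widehat{T_u f}(n)|^2$ and $\n^{-1}\log(\n+1)|\widehat{T_u f}(n)|^2=|\tilde c_n|^2$ for $n\ge1$, this gives $T_u f\notin H^{-1/2,\sqrt{\log}}_+$ and completes the proof.

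The one genuine subtlety, and the step I would check most carefully, is that the Hardy truncation $\widehat f(m)=0$ for $m<0$, together with the asymmetric roles of the two factors, does not remove the divergent block. In Lemma \ref{lem:convolution} the nonnegative index sits on the rough factor $x$, whereas here it sits on the smooth factor $\widehat f=y$, the two sums being interchanged by $k\leftrightarrow n-j$. What makes this harmless is exactly the symmetry built into the construction---one takes $a=b$, both sequences nonnegative, with $(a_m)_{m\ge1}$ monotone decreasing---so that the dominant contribution, carried by a small index on $f$ and an index $\asymp n$ on $u$, falls in the range $j\ge 0$ that survives the truncation. Verifying that the weight ratio stays bounded below on $0\le j<n/2$, i.e. the analogue of $B_{n,k}\ge C$, is the only estimate requiring care; everything else is a transcription of the computation already performed for Lemma \ref{lem:convolution}.
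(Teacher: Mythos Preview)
Your proof is correct and follows precisely the route the paper indicates, namely reusing the explicit sequences from Lemma~\ref{lem:convolution}; the paper itself gives no further details beyond the remark that the corollary follows from those arguments. Your identification and handling of the one subtlety---that the Hardy truncation forces the nonnegative index onto the smooth factor rather than the rough one, which is harmless because the divergent block $n/2<k<n$ lies in the range common to both sums and all terms are nonnegative---is exactly what is needed to turn the lemma into the corollary.
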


Let us now compute the second differential of the Birkhoff map \eqref{eq:Phi-introduction} at $u=0$. 
For simplicity of notation we identify the (real) space $\h^\beta_{r,0}$ with $\h^\beta_+$, $\beta\in\R$, 
and write 
\begin{equation}\label{eq:Phi-reduced}
\Phi : H^s_{r,0}\to\h^{\frac{1}{2}+s}_+,\ u\mapsto
\big(\Phi_n(u)\big)_{n \ge 1},\quad s>-1/2\,.
\end{equation}
Recall from \cite{GK} and \cite[formula (93)]{GKT3} that the differential 
$d_0\Phi : H^s_{r,0}\to\h^{\frac{1}{2}+s}_+$ of \eqref{eq:Phi-reduced} at $u=0$ coincides 
with the weighted Fourier transform $\xi\mapsto\left(-\frac{\widehat{\xi}(n)}{\sqrt{n}}\right)_{n\ge 1}$.
For the second differential $d^2_0\Phi$ of \eqref{eq:Phi-reduced} at $u=0$ we have

\begin{Lem}\label{lem:second_differential}
For $s>-1/2$ and for any $\xi\in H^s_{r,0}$ ,
\begin{equation}\label{eq:second_differential}
d^2_0\Phi(\xi)=
\Big(-\frac{1}{\sqrt{n}}\sum_{k\ge 0,k\ne n}\widehat{\xi}(-k)\frac{\widehat{\xi}(k-n)}{k-n}\Big)_{n\ge 1}\,.
\end{equation}
\end{Lem}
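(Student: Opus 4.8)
\emph{Approach.} The plan is to compute $d^2_0\Phi$ by second order perturbation theory for the simple eigenvalues of $L_u=D-T_u$ near $u=0$. Since the map \eqref{eq:Phi-reduced} is real-analytic, $d^2_0\Phi$ is a bounded symmetric quadratic form, so it suffices to evaluate it on the dense set of real-valued trigonometric polynomials $\xi\in H^s_{r,0}$ and then extend by continuity. For such $\xi$ the family $t\mapsto L_{t\xi}=D-tT_\xi$ is an analytic perturbation of $L_0=D$, whose eigenvalues are the integers $\lambda_n(0)=n$ with eigenfunctions $f_n(0)=e^{inx}$ and, by \eqref{eq:kappa_n}, norming constants $\kappa_n(0)=1/n$. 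By \eqref{eq:f_n-analyticity} and Theorem \ref{th:L_u-complex}(ii) the quantities $\lambda_n(t\xi)$, $f_n(t\xi)$ and $\kappa_n(t\xi)$ are analytic in $t$, and I would read off their Taylor coefficients from the Riesz projection / Rayleigh--Schr\"odinger formulas, writing $\Phi(t\xi)=t\,d_0\Phi(\xi)+t^2d^2_0\Phi(\xi)+O(t^3)$.

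\emph{The first order data are stationary.} The crucial simplification is that the first order corrections vanish at $u=0$. Indeed the first order eigenvalue shift is $\1-T_\xi e^{inx}|e^{inx}\2=-\widehat\xi(0)=0$ because $\xi$ has mean zero; hence the gaps $\gamma_p$ and, through \eqref{eq:kappa_n}, the norming constants are stationary to first order, so that $\kappa_n(t\xi)^{-1/2}=\sqrt n+O(t^2)$. The $L^2$-normalization factor of the eigenfunction is likewise stationary to first order, the first order correction being orthogonal to $e^{inx}$. Finally the sign normalization \eqref{eq:normalizing_conditions} is stationary to first order as well: the overlaps $\1 f_0|1\2$ and $\1 f_n|Sf_{n-1}\2$ equal $1+O(t^2)$, the point being that the first order correction of $f_{n-1}$, shifted by $S$, carries no $e^{inx}$-component since the intermediate state $e^{i(n-1)x}$ is excluded from the perturbation sum. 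Consequently the phase fixed by \eqref{eq:normalizing_conditions} contributes nothing at first order.

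\emph{Extracting the quadratic term.} Because $\1 1|e^{inx}\2=0$ for $n\ge1$, neither the $L^2$-norm correction nor the first order phase can influence the $t^2$-coefficient of $\1 1|f_n(t\xi)\2$, and $\kappa_n(t\xi)^{-1/2}$ enters only through its value $\sqrt n$. Thus $d^2_0\Phi_n(\xi)$ equals $\sqrt n$ times the second order Taylor coefficient of the zeroth Fourier coefficient of $f_n(t\xi)$. The standard second order formula gives this coefficient as $\frac1n\sum_{m\ge0,\,m\ne n}\frac{V_{0m}V_{mn}}{n-m}$, where $V_{jk}=\1-T_\xi e^{ikx}|e^{ijx}\2=-\widehat\xi(j-k)$. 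Substituting $V_{0m}=-\widehat\xi(-m)$ and $V_{mn}=-\widehat\xi(m-n)$, using $n-m=-(m-n)$, and (since $\xi$ is real-valued) the relation $\overline{\widehat\xi(m)}=\widehat\xi(-m)$ to put the expression in the required form, yields precisely \eqref{eq:second_differential}. Extending from trigonometric polynomials to all $\xi\in H^s_{r,0}$ by density and the continuity of the bounded form $d^2_0\Phi$ then completes the proof.

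\emph{Main obstacle.} I expect the delicate point to be the bookkeeping of the three normalizations --- the $L^2$ norm, the phase \eqref{eq:normalizing_conditions}, and the norming constants $\kappa_n$ --- and the verification that each is invisible at second order. The stationarity of the phase is the subtlest of these and relies specifically on the shift-operator structure of \eqref{eq:normalizing_conditions}; once it is in place, the computation collapses to the single second order matrix element above.
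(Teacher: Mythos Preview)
Your approach is correct and is in substance the same as the paper's, but executed more directly. The paper works through the factorization $\Phi_n(u)=-\sqrt n\,\frac{a_n(u)}{\sqrt{n\kappa_n(u)}}\,\Psi_n(u)$ from \cite{GKT3} (with $\Psi_n(u)=\langle P_n(u)e_n,1\rangle$), quotes the formulae for $d_0\Psi_n$ and $d_0^2\Psi_n$ from that reference, and then devotes most of its effort to a Riesz-projector/contour-integral computation establishing $d_0a_n=0$. Since the recursively defined factor $a_n$ is exactly the phase correction imposed by the normalization \eqref{eq:normalizing_conditions}, that computation is the same fact as your stationarity of the phase; you obtain it instead from the structural observation that $Sf_{n-1}^{(1)}$ carries no $e^{inx}$-component because the intermediate mode $m=n-1$ is excluded from the first-order perturbation sum. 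Your route is self-contained and avoids the machinery of \cite{GKT3}; the paper's route buys a ready-made second-order expansion of the pre-Birkhoff map $\Psi_n$. One small slip: in the paper's convention $\langle 1|f_n\rangle=\overline{\widehat{f_n}(0)}$, not $\widehat{f_n}(0)$, so the expression your computation literally produces is the complex conjugate of \eqref{eq:second_differential}; this is immaterial for the application to Proposition~\ref{prop:no_analyticity}, which only uses the (un)boundedness of the associated quadratic form.
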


\begin{proof}[Proof of Lemma \ref{lem:second_differential}]
We will follow the framework developed in \cite[Section 4]{GKT3}.
Assume that $s>-1/2$. By \cite[formula (58)]{GKT3} for any $u$ in an open neighborhood $U$ of zero 
in $H^s_{r,0}$,
\begin{equation}\label{eq:Phi_n}
\Phi_n(u)=-\sqrt{n}\,\frac{a_n(u)}{\sqrt[+]{n\kappa_n(u)}}\,\Psi_n(u),\quad n\ge 1,
\end{equation}
where $\Psi_n(u) : U\to\C$, $\kappa_n : U\to\R$, and $a_n : U\to\C$, are analytic maps
(cf. \cite[Proposition 3.1]{GKT3}, \cite[Lemma 4.1, Lemma 4.3]{GKT3}) and
$\sqrt[+]{\cdot}$ denotes the branch of the square root defined by $\sqrt[+]{1}=1$.
Here $\Psi_n(u):=\1 h_n(u),1\2$ is the $n$-th component of the pre-Birkhof map studied in \cite[Section 3]{GKT3},
$h_n(u):=P_n(u) e_n$ where $P_n\equiv P_n(u)$ is the Riesz projector onto the $n$-th eigenspace of 
the Lax operator $L_u\equiv D-T_u$, and $e_n:=e^{i n x}$, $n\ge 0$ (cf. \cite[formula (19)]{GKT3}). 
Since $h_n(0)=e^{i nx}$, $n\ge 0$, we conclude that $\Psi_n(0)=0$, $n\ge 1$.
This together with (26) and (29) in \cite{GKT3} implies that for any $\xi\in H^s_{r,0}$ and $n\ge 1$,
\begin{equation}\label{eq:Psi_n-differential}
\Psi_n(0)=0,\quad d_0\Psi_n(\xi)=\frac{\widehat{\xi}(-n)}{n},\quad
d^2_0\Psi_n(\xi)=-\frac{1}{n}\sum_{k\ge 0,k\ne n}\widehat{\xi}(-k)\frac{\widehat{\xi}(k-n)}{k-n}.
\end{equation}
The norming constants $\kappa_n(u)$, $n\ge 0$, are given by the product representation (34) in \cite{GKT3},
$\kappa_n(u)>0$ for $u\in U$, and (see \cite[Remark 5.2]{GK}, \cite[Corollary 6 (iv)]{GKT2})
\begin{equation}\label{eq:kappa_n-differential}
\sqrt[+]{n\kappa_n(0)}=1,\quad d_0\kappa_n=0,\quad n\ge 0.
\end{equation}
We will also need the norming constants $\mu_n(u)>0$, $n\ge 1$, $u\in U$, given by the product representation 
(see e.g. \cite{GK}, \cite[formula (35)]{GKT3})
\begin{equation}\label{eq:mu_n}
\mu_n:=\Big(1-\frac{\gamma_n}{\lambda_n-\lambda_0}\Big)
\prod_{k\ge 1,k\ne n}\Big(1-\gamma_n\frac{\gamma_k}{(\lambda_{k-1}-\lambda_{n-1})(\lambda_k-\lambda_n)}\Big)
\end{equation}
where $\gamma_n\equiv\gamma(u):=\lambda_n(u)-\lambda_{n-1}(u)-1\ge 0$ are the spectral gaps and 
$\lambda_n\equiv\lambda_n(u)$, $n\ge 0$, are the eigenvalues of the Lax operator $L_u$.
Since the product \eqref{eq:mu_n} converges absolutely and locally uniformly on $U$ 
(cf. \cite[Theorem 3]{GKT2}) we can differentiate it term by term to conclude from $\lambda_n(0)=n$, 
$d_0\gamma_n=0$, $n\ge 1$ (\cite[Remark 5.2]{GK}) that
\begin{equation}\label{eq:mu_n-differential}
\mu_n(0)=1,\quad d_0\mu_n=0,\quad n\ge 1.
\end{equation}
Let us now turn our attention to the quantities $a_n(u)$, $n\ge 1$, defined recursively for $u\in U$ by
\begin{equation}\label{eq:a_n(u)}
a_0(u):=\frac{\sqrt[+]{\kappa_0(u)}}{\1 h_0(u),1\2},\quad 
a_n(u)=\frac{\nu_n(u)}{\sqrt[+]{\mu_n(u)}}\,a_{n-1}(u),\quad n\ge 1,
\end{equation}
where
\begin{equation}\label{eq:nu_n}
\nu_n(u):=1+\frac{\delta_n(u)}{\alpha_n(u)},\,\,\,\alpha_n(u):=\1P_n e_n|e_n\2,\,\,\,
\beta_n(u):=\1 P_nSP_{n-1}e_{n-1}|e_n\2,
\end{equation}
\begin{equation}\label{eq:delta_n}
\delta_n(u):=\beta_n(u)-\alpha_n(u),
\end{equation}
and $S : H^{s+1}_+\to H^{s+1}_+$ is the shift operator (cf. \cite[Section 4]{GKT3}).
Since $\alpha_n(0)=\beta_n(0)=\1 e_n|e_n\2=1$ we conclude from \eqref{eq:delta_n} that $\delta_n(0)=0$, $n\ge 1$.
By combining this with the first formula in \eqref{eq:nu_n} we obtain that
\begin{equation}\label{eq:nu_n-differential}
\nu_n(0)=1,\quad d_0\nu_n=d_0\delta_n,\quad n\ge 1.
\end{equation}
It follows from \eqref{eq:mu_n-differential}, \eqref{eq:nu_n-differential}, and \eqref{eq:a_n(u)} that
\begin{equation}\label{eq:a_n(0)}
a_n(0)=1,\quad n\ge 0\,.
\end{equation}
In order to compute the differential $d_0a_0$ consider the Taylor's expansion of $\Psi_0(u):=\1 h_0(u),1\2$ for $u\in U$ at zero 
\begin{align}
\Psi_0(u)&\equiv\1 P_0(u) 1,1\2=-\frac{1}{2\pi i}\oint_{\partial D_0}\1(L_u-\lambda)^{-1} 1,1\2\,d\lambda\nonumber\\
&=\frac{1}{2\pi i}\sum_{m\ge 1}\oint_{\partial D_0}\big\1[T_u(D-\lambda)^{-1}]^m 1,1\big\2\,\frac{d\lambda}{\lambda}\nonumber\\
&=-\frac{1}{2\pi i}\oint_{\partial D_0}\frac{\1 u,1\2}{\lambda^2}\,d\lambda+\cdots\label{eq:Phi_0-expansion}
\end{align}
where $\cdots$ stands for terms of order $\ge 2$ in $u$ and $\partial D_0$ is the counterclockwise oriented boundary 
of the centered at zero closed disk of radius $1/3$ in $\C$ and the neighborhood $U$ is chosen as in \cite[Proposition 2.2]{GKT3}.
Since the integral in \eqref{eq:Phi_0-expansion} vanishes we conclude that
\[
\Psi_0(0)=\1 P_0(0) 1,1\2=1,\quad d_0\Psi_0=0.
\]
By combining this with \eqref{eq:kappa_n-differential} we obtain from the first formula in \eqref{eq:a_n(u)} that 
\begin{equation}\label{eq:a_0-differential}
d_0a_0=0.
\end{equation}
It follows from \eqref{eq:mu_n-differential}, \eqref{eq:nu_n-differential}, \eqref{eq:a_n(0)}, 
and the second formula in \eqref{eq:a_n(u)} that
\begin{equation*}
d_0a_n=d_0\delta_n+d_0a_{n-1},\quad n\ge 1.
\end{equation*}
Hence, we conclude from \eqref{eq:a_0-differential} that
\begin{equation}\label{eq:a_n-differential'}
d_0a_n=\sum_{1\le k\le n}d_0\delta_k,\quad n\ge 1.
\end{equation}
In order to compute $d_0\delta_n$, $n\ge 1$, we argue as follows.
Recall from \cite[Section 5]{GKT3} that the Taylor's expansion of $\delta_n(u)$ for $u\in U$ at zero is 
given by \cite[formula (69)]{GKT3}. This implies that
\begin{equation}\label{eq:delta_n-differential'}
d_0\delta_n(u)=-\sum_{k\ge 0}\sigma_{n,k}C_{u,n}(k),\quad n\ge 1,
\end{equation}
where
\begin{equation}\label{eq:C_u,n}
C_{u,n}(k):=
\frac{1}{2\pi i}\oint_{\partial D_{n-1}}\!\!\!\!\frac{\widehat{u}\big(k-(n-1)\big)}{(n-1)-\lambda}\frac{d\lambda}{k-\lambda}
=\left\{
\begin{array}{lc}
\frac{\widehat{u}(k-(n-1))}{k-(n-1)}&,\, k\ne n-1,\\
0&,\, k=n-1,
\end{array}
\right.
\end{equation}
and $\sigma_{n,k}$ is the term of order zero in $u$ in the expansion of $\1 P_n(u)S e_k|e_n\2$ for $u\in U$ at zero
\begin{align}
\1 P_n(u)S e_k|e_n\2&=-\frac{1}{2\pi i}\sum_{r\ge 0}\oint_{\partial D_n}
\big\1(D-\lambda)^{-1}[T_u(D-\lambda)^{-1}]^r Se_k\big|e_n\big\2\,d\lambda\nonumber\\
&=-\frac{1}{2\pi i}\oint_{\partial D_n}\frac{\1 e_{k+1}|e_n\2}{(k+1)-\lambda}\,d\lambda+\cdots,\quad n\ge 1,
\end{align}
where $\cdots$ stands for terms of order $\ge 1$ in $u$ and $\partial D_n$ is the counterclockwise oriented boundary 
of the centered at $n$ closed disk of radius $1/3$ in $\C$.
This implies that
\begin{equation*}
\sigma_{n,k}=-\frac{1}{2\pi i}\oint_{\partial D_n}\frac{\1 e_{k+1}|e_n\2}{(k+1)-\lambda}\,d\lambda=\delta_{k,n-1}.
\end{equation*}
By combining this with \eqref{eq:delta_n-differential'} and \eqref{eq:C_u,n} we obtain that $d_0\delta_n=0$, $n\ge 1$.
Hence, by \eqref{eq:a_n-differential'},
\begin{equation}\label{eq:a_n-differential(b)}
d_0 a_n=0,\quad n\ge 1.
\end{equation}
Finally, the expression \eqref{eq:second_differential} for the second differential of \eqref{eq:Phi-reduced}
follows from the product rule applied twice to \eqref{eq:Phi_n} together with \eqref{eq:kappa_n-differential}, 
\eqref{eq:a_n(0)}, and \eqref{eq:a_n-differential(b)}.
\end{proof}

\begin{proof}[Proof of Proposition \ref{prop:no_analyticity}]
The proposition follows directly from Corollary \ref{coro:Q} and Lemma \ref{lem:second_differential}.
In fact, take $s>-1/2$ and assume that the Birkhoff map \eqref{eq:Phi-reduced} extends to a $C^2$-map
\begin{equation}
\Phi : H^{-1/2,\sqrt{\log}}_{r,0}\to\h^{0,\sqrt{\log}}_+.
\end{equation}
Then, its second differential at zero $d_0^2\Phi : H^{-1/2,\sqrt{\log}}_{r,0}\to\h^{0,\sqrt{\log}}_+$ 
is a bounded extension of the second differential \eqref{eq:second_differential} of the map \eqref{eq:Phi-reduced}.
Since this contradicts Corollary \ref{coro:Q} we conclude that the map \eqref{eq:Phi-reduced} cannot be
extended to a $C^2$-map and, in particular, to an analytic map.
\end{proof}

\appendix

\section{Auxiliary results}\label{sec:appendix}
In this Appendix we provide the proofs of several technical results used in the main body of the paper.
We start with the following lemma on the pointwise multiplication of functions in $H^{1/2}_c$.

\begin{Lem}\label{lem:multiplication_in_H^{1/2}}
For any $u,v\in H^{1/2}_c$ we have that $uv\in H^{1/2,1/\sqrt{\log}}_c$ and the map
\[
H^{1/2}_c\times H^{1/2}_c\to H^{1/2,1/\sqrt{\log}}_c,\quad(u,v)\mapsto uv,
\]
is bounded.
\end{Lem}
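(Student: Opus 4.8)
The plan is to reduce the statement to a single weighted convolution estimate on the Fourier side. Setting $w:=uv$, the Sobolev embeddings $H^{1/2}_c\hookrightarrow L^p(\T,\C)$ for all $p<\infty$ guarantee that $w\in L^1(\T,\C)$ and that its Fourier coefficients are given by the convolution $\widehat w(n)=\sum_{k}\widehat u(k)\widehat v(n-k)$, so that $|\widehat w(n)|\le\sum_k|\widehat u(k)|\,|\widehat v(n-k)|$. Recalling that $\|u\|_{1/2}^2=\sum_k\1 k\2\,|\widehat u(k)|^2$ and that the target norm is $\|w\|_{1/2,1/\sqrt{\log}}^2=\sum_n\frac{\n}{\log(\n+1)}\,|\widehat w(n)|^2$, I would introduce the nonnegative $\ell^2$-sequences $a_k:=\sqrt{\1 k\2}\,|\widehat u(k)|$ and $b_m:=\sqrt{\1 m\2}\,|\widehat v(m)|$, which satisfy $\|a\|_{\ell^2}=\|u\|_{1/2}$ and $\|b\|_{\ell^2}=\|v\|_{1/2}$, and rewrite $|\widehat u(k)|=a_k/\sqrt{\1 k\2}$, $|\widehat v(n-k)|=b_{n-k}/\sqrt{\1 n-k\2}$.

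The first step is to apply the Cauchy--Schwarz inequality in the variable $k$, pairing $a_kb_{n-k}$ against $(\1 k\2\,\1 n-k\2)^{-1/2}$, to obtain
\begin{equation*}
|\widehat w(n)|^2\le\Big(\sum_k a_k^2\,b_{n-k}^2\Big)\Big(\sum_k\frac{1}{\1 k\2\,\1 n-k\2}\Big)\,.
\end{equation*}
The heart of the matter is then the elementary kernel estimate
\begin{equation*}
\sum_{k\in\Z}\frac{1}{\1 k\2\,\1 n-k\2}\le C\,\frac{\log(\n+1)}{\n}\,,\quad n\in\Z\,,
\end{equation*}
for some universal constant $C>0$. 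Granting this, I would insert it into the previous display, multiply by the weight $\frac{\n}{\log(\n+1)}$ --- which exactly cancels the right-hand factor --- and sum over $n$, using Tonelli's theorem to evaluate $\sum_n\sum_k a_k^2\,b_{n-k}^2=\|a\|_{\ell^2}^2\,\|b\|_{\ell^2}^2$. This yields $\|uv\|_{1/2,1/\sqrt{\log}}^2\le C\,\|u\|_{1/2}^2\,\|v\|_{1/2}^2$, which is precisely the claimed membership together with the boundedness of the bilinear map.

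The main obstacle, and the step that accounts for the appearance of the $1/\sqrt{\log}$ weight, is the kernel estimate; it quantifies the exact way in which $H^{1/2}_c$ fails to be an algebra. By the symmetry $(n,k)\mapsto(-n,-k)$, which preserves $\1\cdot\2$, it suffices to treat $n\ge 0$; the case $n=0$ is the convergent sum $\sum_k\1 k\2^{-2}$, so I would focus on $n\ge 1$ and split the sum over $k$ into the central range $0\le k\le n$ and the two tails $k<0$ and $k>n$. In the central range the partial-fraction identity $\frac{1}{k(n-k)}=\frac1n\big(\frac1k+\frac1{n-k}\big)$ (with the endpoints $k=0,n$ contributing $\frac2n$) produces $\frac{2}{n}H_{n-1}+\frac2n\sim\frac{2\log n}{n}$; in each tail the substitution $j=k-n$ (resp.\ $j=-k$) gives $\frac1n\sum_{j\ge 1}\big(\frac1j-\frac1{j+n}\big)=\frac{H_n}{n}\sim\frac{\log n}{n}$. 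Collecting the three contributions gives the stated bound, the logarithm arising entirely from the near-endpoint regions $k\approx 0$ and $k\approx n$, with a universal $C$ absorbing the finitely many small values of $n$.
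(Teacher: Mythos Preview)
Your proof is correct and takes a genuinely different route from the paper. The paper sets up the Littlewood--Paley dyadic decomposition, writes $uv=\sum_n(S_nu)v_n+\sum_m u_m(S_{m+1}v)$, and uses the low-frequency $L^\infty$ bound $\|S_nu\|_{L^\infty}\le C\sqrt{n}\,\|u\|_{1/2}$ (coming from Cauchy--Schwarz on the first $2^n$ Fourier modes) together with the dyadic characterization of $H^{1/2,1/\sqrt{\log}}_c$. You instead work entirely on the Fourier side with a single application of Cauchy--Schwarz and the pointwise kernel estimate $\sum_{k\in\Z}\frac{1}{\1 k\2\,\1 n-k\2}\le C\frac{\log(\n+1)}{\n}$, which you verify by partial fractions. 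Your approach is more elementary and self-contained, avoiding the paraproduct machinery; the paper's approach fits into the standard paradifferential framework and would generalize more mechanically to other scales of weights or to product estimates with unequal regularities. Both identify the same mechanism --- the logarithmic divergence of $\sum_{|k|\le N}\1 k\2^{-1}$ --- as the source of the $\sqrt{\log}$ loss.
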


\begin{proof}[Proof of Lemma \ref{lem:multiplication_in_H^{1/2}}]
The lemma easily follows by using the dyadic decomposition of functions (see e.g. \cite[Chapter II]{AlinhacGerard}).
Below we give the proof for the reader's convenience. For $f\in\mathcal{D}'(\T)$ we set 
\[
f_{-1}:=\widehat{f}(0),\quad
f_n:=\sum_{2^{n-1}<|k|<2^{n+1}}\varphi\big(k/2^n\big)\,\widehat{f}(k)\,e^{i k x},\quad n\ge 0,
\]
where $\varphi(\xi):=\psi(\xi/2)-\psi(\xi)$, $\psi\in C^\infty_c(\R)$ has non-negative values, $\psi(\xi)=1$ for $|\xi|\le1/2$, 
and $\psi(\xi)=0$ for $|\xi|\ge 1$. Then, the functions $\psi(\xi)$ and $\varphi(\xi/2^n)$, $n\ge 0$, provide a partition of unity
of $\R$. As in the case on the line one then sees that $f\in H^s_c$, $s\in\R$, if and only if
$\Big(2^{ns}\,\|f_n\|\Big)_{n\ge -1}\in\ell^2_{\ge -1}$. The norm on $H^s_c$
and the norm $\|f\|_s:=\Big(\sum_{n\ge -1}2^{2ns}\,\|f_n\|^2\Big)^{1/2}$ are equivalent. 
Similarly, $f\in H^{s,1/\sqrt{\log}}_c$ with $s\in\R$ if and only if
$\Big(\frac{2^{ns}}{\sqrt{\n}}\,\|f_n\|\Big)_{n\ge -1}\in\ell^2_{\ge -1}$, and the corresponding norms are
equivalent. For $u,v\in H^{1/2}_c$ we write
\begin{equation}\label{eq:decomposition}
uv=\sum_{m,n\ge -1}u_nv_m=\sum_{n\ge -1}(S_nu)v_n+\sum_{m\ge -1}u_m(S_{m+1}v)
\end{equation}
where $S_n f:=\sum_{-1\le k\le n-1} f_k$. We have
\begin{align*}
\|S_nu\|_{L^\infty}&\le\sum_{|k|\le 2^n}|\widehat{u}(k)|
\le C_1\,\Big(\sum_{|k|\le 2^n}\frac{1}{k}\,\,\,\Big)^{1/2}\|u\|_{1/2}\nonumber\\
&\le C_2\sqrt{\n}\,\|u\|_{1/2}
\end{align*}
with constants $C_1, C_2>0$ independent of $n\ge -1$.
Hence,
\[
\|(S_nu)v_n\|\le \|S_nu\|_{L^\infty}\|v_n\|\le C_2\,\|u\|_{1/2}\sqrt{\n}\,\|v_n\|
\]
and, from the dyadic characterization of $H^{1/2}_c$,
\[
\frac{2^{n/2}}{\sqrt{\n}}\,\|(S_nu)v_n\|\le C_3\,\|u\|_{1/2}\,c_n
\]
where $\sum_{n\ge -1} c_n^2=1$ and $C_3>0$ is independent of $n\ge -1$. 
By arguing as in the proof of \cite[Lemma 2.1]{AlinhacGerard} we then conclude that 
the first sum on the right side of \eqref{eq:decomposition} belongs to $H^{1/2,1/\sqrt{\log}}_c$ and 
\[
\big\|\sum_{n\ge -1}(S_nu)v_n\big\|_{1/2,1/\sqrt{\log}}\le C\,\|u\|_{1/2}\|v\|_{1/2}
\]
with a constant $C>0$ independent of the choice of $u,v\in H^{1/2}_c$.
The second sum on the right side of \eqref{eq:decomposition} is treated in the same way.
This completes the proof of the lemma.
\end{proof}

As a corollary from Lemma \ref{lem:multiplication_in_H^{1/2}} we obtain the following

\begin{Coro}\label{coro:main_inequality}
For any $u\in H^{-1/2,\sqrt{\log}}_c$ and $v\in H^{1/2}_c$ we have that $uv\in H^{-1/2}_c$ and the map
\[
H^{-1/2,\sqrt{\log}}_c\times H^{1/2}_c\to H^{-1/2}_c,\quad(u,v)\mapsto uv,
\]
is bounded. In particular, there exists a positive constant $K_0>0$ such that
$\|uv\|_{-1/2}\le K_0\|u\|_{-1/2,\sqrt{\log}}\|v\|_{1/2}$ for any $u\in H^{-1/2,\sqrt{\log}}_c$ and $v\in H^{1/2}_c$.
\end{Coro}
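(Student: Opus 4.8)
The plan is to deduce the Corollary from Lemma~\ref{lem:multiplication_in_H^{1/2}} by a duality argument. The key structural observation is that the logarithmically weighted scale is self-dual in the relevant sense: the Fourier weights $\n^{-1}\log(\n+1)$ defining $H^{-1/2,\sqrt{\log}}_c$ and $\n/\log(\n+1)$ defining $H^{1/2,1/\sqrt{\log}}_c$ are mutually reciprocal, so that $H^{-1/2,\sqrt{\log}}_c$ is the dual of $H^{1/2,1/\sqrt{\log}}_c$ with respect to the bilinear pairing $\1\cdot,\cdot\2$. Concretely, writing $\1 u,g\2=\sum_n\widehat u(n)\widehat g(-n)$ and splitting each weight as the product of its two square roots, a Cauchy--Schwarz estimate at the level of Fourier coefficients gives
\[
\big|\1 u,g\2\big|\le\|u\|_{-1/2,\sqrt{\log}}\,\|g\|_{1/2,1/\sqrt{\log}}
\]
for all $u\in H^{-1/2,\sqrt{\log}}_c$ and $g\in H^{1/2,1/\sqrt{\log}}_c$. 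In the same way, $H^{-1/2}_c$ is the dual of $H^{1/2}_c$ via $\1\cdot|\cdot\2$, so that $\|f\|_{-1/2}=\sup\{|\1 f|w\2|\,:\,w\in H^{1/2}_c,\ \|w\|_{1/2}\le 1\}$.

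First I would fix $u\in H^{-1/2,\sqrt{\log}}_c$ and $v\in H^{1/2}_c$ and pair the product against an arbitrary $w\in H^{1/2}_c$. Using the two pairing conventions one rewrites
\[
\1 uv|w\2=\frac{1}{2\pi}\int_0^{2\pi}u\,(v\overline{w})\,dx=\1 u,v\overline{w}\2 .
\]
Since complex conjugation preserves the $H^{1/2}_c$-norm we have $\overline{w}\in H^{1/2}_c$, and Lemma~\ref{lem:multiplication_in_H^{1/2}} applies to the product $v\overline{w}$: it yields $v\overline{w}\in H^{1/2,1/\sqrt{\log}}_c$ together with the bound $\|v\overline{w}\|_{1/2,1/\sqrt{\log}}\le C\|v\|_{1/2}\|w\|_{1/2}$ for a constant $C>0$ independent of $v,w$.

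Combining the last display with the duality estimate of the first paragraph gives
\[
\big|\1 uv|w\2\big|=\big|\1 u,v\overline{w}\2\big|\le\|u\|_{-1/2,\sqrt{\log}}\,\|v\overline{w}\|_{1/2,1/\sqrt{\log}}\le C\,\|u\|_{-1/2,\sqrt{\log}}\|v\|_{1/2}\|w\|_{1/2}.
\]
Taking the supremum over all $w\in H^{1/2}_c$ with $\|w\|_{1/2}\le 1$ and invoking the duality characterization of $\|\cdot\|_{-1/2}$ then shows $uv\in H^{-1/2}_c$ with $\|uv\|_{-1/2}\le K_0\|u\|_{-1/2,\sqrt{\log}}\|v\|_{1/2}$, where $K_0:=C$. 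The cleanest way to make the meaning of $uv$ precise is to \emph{define} it as the element of $H^{-1/2}_c$ representing the bounded linear functional $w\mapsto\1 u,v\overline{w}\2$ on $H^{1/2}_c$; the estimate above is exactly the boundedness of this functional, and when $u$ is a trigonometric polynomial the representative coincides with the ordinary pointwise product, so by density of trigonometric polynomials the construction agrees with the usual product wherever the latter makes sense.

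I do not expect a genuine analytic obstacle here, since the substance is already contained in Lemma~\ref{lem:multiplication_in_H^{1/2}}; the work is essentially bookkeeping. The one point that must be checked carefully is that the two pairs of weights are genuinely reciprocal, so that the duality estimate holds with precisely the norms $\|\cdot\|_{-1/2,\sqrt{\log}}$ and $\|\cdot\|_{1/2,1/\sqrt{\log}}$, and that the conjugation $w\mapsto\overline{w}$ is tracked consistently with the convention $\1 f|g\2=\frac{1}{2\pi}\int f\overline{g}$ used to identify $H^{-1/2}_c$ with the dual of $H^{1/2}_c$. The density/approximation step used to define the product is routine.
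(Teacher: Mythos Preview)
Your proposal is correct and is precisely the duality argument the paper has in mind; the paper's own proof consists of the single sentence ``The corollary follows easily by duality from Lemma~\ref{lem:multiplication_in_H^{1/2}},'' and what you wrote is exactly the unpacking of that sentence.
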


\noindent The corollary follows easily by duality form Lemma \ref{lem:multiplication_in_H^{1/2}}.

\medskip

\begin{proof}[Proof of Lemma \ref{lem:kappa-estimate}]
Recall from \eqref{eq:trace_formula} that for any $u\in H^{-1/2,\sqrt{\log}}_{r,0}$,
\begin{equation}\label{eq:trace_formula'}
\sum_{p=1}^\infty\gamma_p(u)=-\lambda_0(u),
\end{equation}
where $\lambda_0(u)$ and $\gamma_p(u)\ge 0$, $p\ge 1$, depend continuously on 
$u\in H^{-1/2,\sqrt{\log}}_{r,0}$ (Theorem \ref{th:L_u-real} (iii)).
Then, by Dini's theorem (see, e.g., \cite[Theorem 8, Ch. 4]{Widom}), the series in \eqref{eq:trace_formula'} converges
uniformly on compact sets of $u$'s in $H^{-1/2,\sqrt{\log}}_{r,0}$. The continuity of \eqref{eq:kappa_0} and
\eqref{eq:kappa_n} then follows from the uniform convergence of the infinite products and the continuity of the quantities involved.
Let us now prove \eqref{eq:kappa-estimate}.
For any $u\in H^{-1/2,\sqrt{\log}}_{r,0}$ and $n\ge 1$ we have
\begin{align}
\sum_{p\ge 1, p\ne n}\frac{\gamma_p(u)}{|\lambda_p(u)-\lambda_n(u)|}&\le
\sum_{|n-p|>\frac{n}{2}}\frac{\gamma_p(u)}{|p-n|}+
\sum_{|n-p|\le\frac{n}{2}, p\ne n}\frac{\gamma_p(u)}{|p-n|}\nonumber\\
&\le\frac{2}{n}\big(-\lambda_0(u)\big)+\sum_{p\ge\frac{n}{2}}\gamma_p(u)\label{eq:kappa-estimate1}
\end{align} 
where we use that $|\lambda_p-\lambda_n|\ge|p-n|$ by \eqref{eq:lambda-inequality}.
Let us now pick $v\in H^{-1/2,\sqrt{\log}}_{r,0}$ and $\varepsilon>0$.
Then, we can choose $n_0\ge 1$ so that $\frac{2}{n_0}\big(-\lambda_0(v)\big)\le\varepsilon/4$ and 
$\sum_{p\ge\frac{n_0}{2}}\gamma_p(v)\le\varepsilon/4$. 
By the continuity of $\lambda_0(u)$ and $\sum_{p\ge\frac{n_0}{2}}\gamma_p(u)$ with respect to
$u\in H^{-1/2,\sqrt{\log}}_{r,0}$ we then obtain that there exists an open neighborhood $U(v)$ of $v$ in 
$H^{-1/2,\sqrt{\log}}_{r,0}$ such that the expression on the right side of \eqref{eq:kappa-estimate1}
is bounded above by $\varepsilon$ uniformly in $u\in U(v)$ and $n\ge n_0$. This proves that 
\begin{equation}\label{eq:series-estimate1}
\sum_{p\ge 0, p\ne n}\frac{\gamma_p(u)}{|\lambda_p(u)-\lambda_n(u)|}\le\varepsilon
\end{equation}
for any $u\in U(v)$ and $n\ge n_0$. The existence of the constants $0<c<C$ and the estimate \eqref{eq:kappa-estimate} 
for $n\ge n_0$ then follows from \eqref{eq:series-estimate1}, \eqref{eq:kappa_n},
and the continuous dependence of the eigenvalues on the potential $u\in H^{-1/2,\sqrt{\log}}_{r,0}$.\\
The case $1\le n<n_0$ follows from \eqref{eq:kappa_n>0} and the continuous dependence of $\kappa_n(u)$ on
$u\in H^{-1/2,\sqrt{\log}}_{r,0}$.
\end{proof}

\end{document}